  \newcommand{\wt}{\widetilde}
  \newcommand{\gdeg}{G\text{\rm -deg}}
  \newcommand{\vp}{\varphi}
  \newcommand{\ve}{\varepsilon}
  \DeclareMathOperator{\id}{Id}
  \def\bn{\mathbb N}
\def\br{\mathbb R}
\def\bz{\mathbb Z}
\def\noi{\noindent}
  \newcommand{\vs}{\vskip .3cm}
  \newcommand{\amal}[5]{#1\prescript{#2}{}\times_{#3}^{#4}#5}
  \newcommand{\norm}[1]{\left\lVert#1\right\rVert}
  \newcommand\cV{\ensuremath{\mathcal V}}
  \newcommand\cW{\ensuremath{\mathcal W}}
  \newcommand\bbR{\ensuremath{\mathbb R}}
  \newcommand\bbV{\ensuremath{\mathbb V}}
  \newcommand\bbZ{\ensuremath{\mathbb Z}}
  \newcommand\bfV{\ensuremath{\mathbf V}}
  \newcommand\scrA{\ensuremath{\mathscr A}}
  \newcommand\scrE{\ensuremath{\mathscr E}}
  \newcommand\scrF{\ensuremath{\mathscr F}}
  \definecolor{mygreen}{rgb}{0,.66,.05}
  \definecolor{lightyellow}{rgb}{1,1,.80}
  \theoremstyle{plain}
  \newtheorem{theorem}{Theorem}[section]
  \newtheorem{proposition}[theorem]{Proposition}
  \newtheorem{lemma}[theorem]{Lemma}
  \newtheorem{corollary}[theorem]{Corollary}
  \theoremstyle{definition}
  \newtheorem{definition}[theorem]{Definition}
  \newtheorem{remark}[theorem]{Remark}
\title[Periodic Solutions to Reversible 2nd Order DDEs in Prescribed Domains]
  {Periodic Solutions to Reversible Second Order Autonomous DDEs in Prescribed Symmetric Nonconvex Domains}
\author[Z. Balanov]{Zalman Balanov}
\address{Department of Mathematics, Xiangnan University, 889 Chen Zhou Da Dao, Chenzhou, Hunan 423000, China, and  
 Department of Mathematical Sciences, the University of Texas at Dallas, Richardson, Texas 75080, USA.}
 \email{balanov@utdallas.edu}
 \author[N. Hirano]{Norimichi Hirano}
 \address{Graduate School of Environment and Information Sciences, Yokohama National University, Yokohama, Japan}   
 \email{hira0918@gmail.com}
\author[W. Krawcewicz]{Wies{\l}aw Krawcewicz}
\address{Applied Mathematics Center at Guangzhou University,
    Guangzhou 510006, China, and
   Department of Mathematical Sciences, the University of Texas at Dallas,
   Richardson, Texas 75080, USA.}
   \email{wieslaw@utdallas.edu}
\author[F. Liao]{Fangfang Liao$^1$}\thanks{$^1$ Fangfang Liao is the corresponding author}
\address{Department of Mathematics, Xiangnan University, 889 Chen Zhou Da Dao, Chenzhou, Hunan 423000, China}
\email{liaofangfang1981@126.com}
\author[A. Murza]{Adrian Murza}
\address{Department of Mathematical Sciences, the University of Texas at Dallas, Richardson, Texas 75080, USA.}
\email{Adrian.Murza@utdallas.edu}
\begin{document}
  	
\maketitle
  
 \begin{abstract}
The existence and spatio-temporal patterns of $2\pi$-periodic solutions  
to second order reversible equivariant autonomous systems with commensurate delays 
are studied 
using the Brouwer $O(2) \times \Gamma \times \mathbb Z_2$-equivariant degree theory. The solutions are supposed to take their values in a prescribed symmetric domain $D$, while $O(2)$ is related to the reversal symmetry combined with 
the autonomous form of the system. The group  $\Gamma$ reflects symmetries of $D$ and/or possible coupling  in the corresponding network of identical oscillaltors, and $\mathbb Z_2$ is related to the oddness of the right-hand side. Abstract results, based on the use of Gauss curvature of $\partial D$, Hartman-Nagumo type {\it a priori bounds} and Brouwer equivariant degree  techniques, are  supported by a concrete example with $\Gamma = D_8$ -- the dihedral  group of  order $16$. 
  \end{abstract}

\noindent  
{\it 2010 AMS Mathematics Subject Classification:} 34K13, 37J45, 39A23, 37C80, 47H11.

\noindent
{\it Key Words:} Second order equivariant delay-differential equations, periodic solutions,  commensurate delays, Brouwer equivariant degree, Burnside ring, reversible systems.
  	
  \section{Introduction}\label{sec:introduction}
  
\

{\bf (a) Subject and goal.}   Existence of periodic solutions to equivariant dynamical systems together with  describing their spatio-temporal symmetries constitute an important problem of equivariant dynamics (see, for example, \cite{GolSchSt,GolStew} for the equivariant singularity theory based methods and \cite{AED,survey,IV-book} for the equivariant degree treatment). As is well-known, second order systems of ODEs with no friction term exhibit an extra symmetry -- the so-called reversal symmetry, i.e. if $x(t)$ is a solution to the system, then so is $x(-t)$. We refer to \cite{Lamb-Roberts} for a comprehensive exposition of (equivariant) reversible ODEs as well as their applications in natural sciences (see also \cite{AS}). It should be stressed that in the context relevant to spatio-temporal symmetries of periodic solutions, the reversal symmetry gives rise to extra subgroups of the non-abelian group $O(2)$.

Simple examples show that, in contrast to their ODEs counterparts, second order delay differential equations (in short, DDEs) with no friction term are not reversible, in general. In \cite{BW} (see also \cite{KW}), we considered  {\it space reversible} equivariant mixed DDEs of the form
\begin{align}
\label{eq:dde_system}
\ddot v(y)=g(\alpha, v(y))+a(v(y-\alpha)+v(y+\alpha)), \quad a,\alpha \in\br ,
\end{align}
 with equivariant $g : \mathbb R^n \to \mathbb R^n$ 
(one can think of equations governing steady-state
solutions to PDEs, cf.~\cite{Lamb-Roberts} and references therein). Note that by replacing $y$ by $t$ in \eqref{eq:dde_system}, 
one obtains {\it  time-reversible} DDEs. However, such systems involve using the information from the future by ``traveling back in time'', which is 
difficult to justify from a commonsensical viewpoint.   
  
Time delay systems with {\it commensurate delays} play an important role in robust control theory (see, for example, \cite{GuoKharitonovCheng} and references therein). A class of such systems exhibiting a reversal symmetry is the main {\it subject} of the present paper. To be more specific, we are interested in the periodic problem
\begin{align}\label{eq:1}
\left\{  \begin{aligned}
 \ddot{x}(t)&=f\left(x(t),x\left(t-\tau_1\right),\dots,x\left(t- \tau_{m-1}\right),\dot x(t)\right),\quad t\in\bbR,\;x(t)\in\mathbf{V} = \mathbb R^n,\\
 x(t)&=x(t+2\pi),\quad\dot{x}(t) = \dot{x}(t+2\pi)
 \end{aligned}\right.
 \end{align}
(where $\tau_k:=\frac{2\pi k}m$, $k=1,2,\dots, m-1$)
 under the following assumption on $f : \bfV \times \bfV^{m-1} \times \bfV \to \bfV$ providing the time reversibility of system \eqref{eq:1}:

\medskip
 \begin{itemize}
\item[(R)] $f(x,y^1,y^2, \cdots, y^{m-2},y^{m-1},z)=f(x,y^{m-1},y^{m-2}, \cdots, y^2, y^1,z)$ 
for all  $(x,y^1,\cdots,y^{m-1},z)\in \bfV^{m+1}$
\end{itemize}

Assume, in addition, that  $\bfV$ is an orthogonal representation of a finite group $\Gamma$. 
Put $\bold{u}:=(x,y^1,\cdots,y^{m-1},z)\in \bfV^{m+1}$ and define on $\bfV^{m+1}$ the diagonal $\Gamma$-action by 
\[\gamma \bold{u} := (\gamma x, \gamma y^1,...,\gamma y^{m-1},\gamma z).\]
 We make the following symmetry and regularity assumptions: 
 \begin{enumerate}[label=($A_\arabic*$)]
  	\item\label{c1} $f$ is $\Gamma$-equivariant, i.e., $f$ is continuous and $f(\gamma \bold{u})=\gamma f(\bold{u})$ for all $\gamma\in\Gamma$ and $\bold{u}\in \bfV^{m+1}$;
  	\item\label{c2}  for all $x,z\in \bfV$ and $\bold y\in \bfV^{m-1}$, one has:
        	\begin{itemize}
	\item[(i)] $f(x,\bold y,-z)=f(z,\bold y,z)$,
	\item[(ii)] $f(-x,-\bold y,z)=-f(z,\bold y,z)$;
	\end{itemize}
	\item\label{c3} The derivative $A:=Df(0)=[A_0,A_1,\dots, A_{m-1},0]$ exists and $A_jA_s =A_sA_j$ for $j,s =0,1,\dots, m-1$.
 \end{enumerate}

Furthermore, we will be looking for periodic solutions ``living" in a prescribed compact $\Gamma$-invariant domain. More formally, 
let  $\eta:\bfV\to \br$ be a function such that:
\begin{itemize} 
\item[($\eta_1$)] $\eta$ is $C^2$-smooth;
\item[($\eta_2$)] $\eta(\gamma x) = \eta(x)$ for all $x\in \bfV$ and $\gamma \in \Gamma$; 
\item[($\eta_3$)]  $\eta(-x)=\eta(x)$ for all $x\in \bfV$; 
\item[($\eta_4$)] $\eta(0)<0$;
\item[($\eta_5$)] $0$ is a regular value of $\eta$;
\item[($\eta_6$)] there exists $R > 0$ such that  $D:=\eta^{-1}(-\infty,0) \subset B_R(0)$, where $B_R(0)$ stands for the open ball of radius $R$ centered at the origin.
\end{itemize}   
Clearly, $\overline{D}$ is a smooth compact (oriented) $\Gamma$-invariant manifold with boundary 
\begin{equation}\label{eq:boundary-M}
C := \partial  D = \eta^{-1}(0)
\end{equation} being 
a smooth $\Gamma$-submanifold  of $\bfV$. Moreover, $-\overline{D}=\overline{D}$ and $0\in D$.
 
A starting point for our discussion is the work  \cite{Amster}, where the authors considered (non-equivariant) non-autonomous systems 
without delays. As a matter of fact, the results obtained in \cite{Amster}, being applied in the autonomous setting, 
do not guarantee that the detected periodic solutions are {\it non-constant}. 
At the same time, by  combining the reversibility of the system in question with other symmetries, we are able to refine the results of \cite{Amster} in such a way that the existence of {\it non-constant} periodic solutions together with their symmetric classification can be provided.

Following \cite{Amster}, 
we will use the concept of second fundamental form in order to formulate 
curvature/growth conditions on $f$ generalizing the classical  Hartman-Nagumo conditions originally formulated for $D = B_R(0)$ (cf.  \cite{Hart,Nagumo}).
Recall the definition of the second fundamental form associated with $C$.  For every  $x\in C$, denote by $n_x$ the outer normal vector to $C$ at $x$ i.e. \begin{equation}\label{eq:n-x}
n_x=\frac{\nabla\eta(x)}{|\nabla\eta(x)|},
\end{equation} and let
$\nu : C\to S^{n-1}$ be the {\it Gauss map} given by $\nu(x):=n_x$. Obviously, for any $x \in C$, the tangent spaces $T_x(C)$ and $T_{n_x}(S^{n-1})$ are parallel, and as such can be identified. This way, for any $x \in C$, the tangent map $d\nu_x$ (as well as its negative known as a {\it Weingarten map} or {\it shape
operator} (see, for example, \cite{Thorpe})) can be considered as a linear map from $T_x(C)$ into itself. The function $\kappa(x):= \det(-d\nu(x))$ is called the {\it Gauss curvature} of $C$. It is well-known (and easy  to see) that $-d\nu_x$ is a self-adjoint operator with respect to the standard inner product $\langle\cdot,\cdot\rangle$ in $\mathbb R^n =  \bfV$. The quadratic form associated with $-d\nu_x$ and denoted $\mathbb I_x(v):= -\langle d\nu_x(v),v \rangle$ is called the {\it second fundamental form} of $M$. We will use the notation $\mathbb I_x(v,w)$ for the bilinear form associated with $\mathbb I_x(v)$.   In particular, for two smooth curves $c$, $d:(-\ve,\ve)\to C$, $c(0)=d(0)=x$ and $\dot c(0) = v$, $\dot d(0)=w$, one has 
\begin{equation}\label{eq:2}
\mathbb I_x(v,w)= - \left\langle \tfrac{d}{dt} \nu(c(t)),\dot d(t)\right\rangle\Big|_{t=0}.
\end{equation}
  
We are now in a position to formulate curvature/growth conditions on $f$ (cf. \cite{Amster}; see also \cite{BebernesSchmitt,GainesMawhin1,GainesMawhin2,Mawhin}):

\begin{enumerate}[label=($A_\arabic*$)]
\setcounter{enumi}{3}
\item\label{c4} for any $x \in C$,  $\bold y \in \bfV^{m-1}$ and $z \in \bf V$ such that $|\bold y|\le R$  and $z \perp n_x$, one has
\begin{equation}\label{eq:A4}
\langle f(x,\bold y,z) ,n_x \rangle > \mathbb I_x(z)
\end{equation}
(cf. ($\eta_1$)--($\eta_6$), \eqref{eq:boundary-M} and \eqref{eq:n-x});

\item\label{c5}
there exist constants $A$, $B>0$ such that the function  $\phi(s) := A + Bs^2$, $s\in \br$, satisfies  
\[
 |f(x, \bold y,z)|\le \phi(|z|)
 \]
for any $(x, \bold y, z) \in \bfV \times \bfV^{m-1} \times \bf V$ with  ${|x|, |\bold y| \le R}$;

\item\label{c6} there exists a constant $K > 0$ such that for any $(x, \bold y, z) \in \bfV \times \bfV^{m-1} \times \bf V$ with ${|x|, |\bold y| \le R}$, 
one has 
\[
|f(x,\bold y,z)|\le \nabla^2\eta(x)(z,z) 
+\langle f(x,\bold y,z),\nabla \eta(x)\rangle + K. 
\]
\item[($A_6'$)] There are constants $\alpha>0$, $K>0$ such that
\[
\forall_{|x|\le R}\;\forall_{|\bold y|\le R}\;\forall_{z\in V}\;\;\;\; |f(x,\bold y,z)|\le \alpha (\langle x, f(x,\bold y, z)\rangle +|z|^2) +K 
\]

\end{enumerate}
\noindent
Given $\eta$ satisfying ($\eta_1$)--($\eta_6$) and $f$ satisfying ($R$) along with  \ref{c1}--\ref{c6} (or ($R$) along with \ref{c1}--\ref{c5} and {\rm ($A_6'$)}), the {\it goal} of  the present paper is to study the existence and spatio-temporal patterns of solutions to problem \eqref{eq:1} living in $D$.  Some  remarks are in order:

(i) Under the assumptions  that $f$ is continuos and satisfies \ref{c4}--\ref{c6}, problem \eqref{eq:1} was considered for {\it non-autonomous ODEs} in \cite{Amster}, with  {\it no symmetry} conditions on $f$ and $D$ being imposed. The method we are using in the present paper allows us to treat equivariant non-autonomous DDEs  the same way as the autonomous ones with cosmetic modifications only. On the other hand, equivariant autonomous systems satisfying condition ($R$) allow us to study
the impact of the orthogonal group $O(2)$ on spatio-temporal patterns of periodic solutions (versus $D_1 = \{1,\kappa\} < O(2)$ in the non-autonomous case).  Also, one can easily adopt the method to treat BVPs rather than periodic problems.  

(ii) Since $\mathbb I_x(z) \geq - \lambda_{min}(x)$ for every $x \in C$ and $z \perp n_x$ (here $\lambda_{min}(x)$ stands for the minimal eigenvalue
of the self-adjoint operator $d\nu_x$), one can replace condition \ref{c4} by the more verifiable one:

\smallskip
\noindent
($A_4^{\prime}$)  for every $x \in C$,  $\bold y \in \bfV^{m-1}$ and $z \in \bf V$ such that $|\bold y|\le R$  and $z \perp n_x$, one has
\begin{equation}\label{eq:A4-prime}
\langle f(x,\bold y,z) ,n_x \rangle \geq -\lambda_{min}(x).
\end{equation}

 \vs 

 {\bf (b) Method.} Observe that given an  orthogonal $G$-representation  $V$ (here  $G$ stands for a compact Lie group) and an admissible $G$-pair  $(f,\Omega)$  in $V$
(i.e. $\Omega\subset V$ is an open bounded $G$-invariant set and $f:V\to V$ is a $G$-equivariant map without zeros on $\partial \Omega$), the Brouwer degree $d_H:=\deg(f^H,\Omega^H)$ is 
well-defined for any $H \le  G$ (here $\Omega^H:= \{x \in \Omega\, :\, hx = x\; \forall h \in H\}$ 
and $f^H:= f|_{\Omega^H}$). If for some $H$, one has $d_H\not=0$, then the existence of solutions with symmetry at least $H$ to equation $f(x)=0$ in $\Omega$, can be predicted. Although this approach provides a  way to determine the existence of solutions in $\Omega$, and even to distinguish their different orbit types, nevertheless, it comes at a price of elaborate $H$-fixed-point space computations which can be a rather challenging task.

Our method is based on the usage of the Brouwer equivariant degree theory; 
 for the detailed exposition of this theory, we refer to the monographs
 \cite{AED, KW,IV-book,KB} and survey \cite{survey} (see also \cite{BKLN,BLN,BHKX}). 
In short, the equivariant degree is a topological tool allowing ``counting'' orbits of solutions to symmetric equations in the same way as the usual Brouwer degree does, but according to their symmetry properties. 

To be more explicit, the equivariant degree $\gdeg(f,\Omega)$ is an element of the free $\bz$-module $A(G)$ generated by the conjugacy classes $(H)$ of subgroups $H$ of $G$ with a finite Weyl group $W(H)$:
\begin{equation}\label{eq:gdeg}
\gdeg(f,\Omega)=\sum_{(H)} n_H\, (H), \quad n_H\in \bz,
\end{equation} 
where the coefficients $n_H$ are given by the following Recurrence Formula
\begin{equation}\label{eq:rec}
n_H=\frac{d_H-\sum_{(L)>(H)} n_L \,n(H,L)\, |W(L)|}{|W(H)|},
\end{equation}
and  $n(H,L)$ denotes the number of subgroups $L'$ in $(L)$ such that $H\le L'$ (see \cite{AED}).  One can immediately recognize a connection 
between the two collections: $\{d_H\}$ and  $\{ n_H\}$, where $H \le  G$ and $W(H)$ is finite. 
As a matter of fact,  $\gdeg(f,\Omega)$ satisfies the standard properties expected from any topological degree.  
However, there is one additional functorial property, which plays a crucial role in computations, namely the {\it product property}. In fact, $A(G)$ has a natural structure of a ring  (which is called the {\it Burnside ring} of $G$), where the multiplication  $\cdot:A(G)\times A(G)\to A(G)$   is defined on generators by 
\begin{equation}\label{eq:mult}
(H)\cdot (K)=\sum_{(L)} m_L\, (L) \quad\quad (W(L) \text{ is finite}),
\end{equation}
where the integer $m_{L}$ represents the number of $(L)$-orbits contained in the space $G/H\times G/K$ equipped with the natural diagonal $G$-action.
The product property for two admissible $G$-pairs $(f_1,\Omega_1)$ and   $(f_2,\Omega_2)$ means the following equality:
\begin{equation}\label{eq:mult-property}
 \gdeg(f_1\times f_2,\Omega_1\times \Omega_2)=  \gdeg(f_1,\Omega_1)\cdot  \gdeg(f_2,\Omega_2).
\end{equation}
Given a $G$-equivariant linear isomorphism $A : V \to V$, formula \eqref{eq:mult-property} combined with the equivariant spectral decomposition of $A$, reduces the computations of $\gdeg(A,B(V))$ to the computation of the so-called basic degrees $\deg_{\cV_k}$, 
which can  be `prefabricated'
in advance for any group $G$ (here $\deg_{\cV_k}:=\gdeg(-\id, B(\cV_k))$ with $\cV_k$ being an irreducible $ G$-representation and $B(X)$ stands for the unit ball in $X$).  ln many cases, the equivariant degree  based method can be easily assisted
by computer (its usage seems to be unavoidable for large symmetry groups). 

In the present paper, to establish the abstract results on the existence and symmetric properties of periodic solutions,
we use the  $G$-equivariant Brouwer degree with $G:=O(2)\times \Gamma\times \bz_2$, where
$O(2)$ is related to the reversal symmetry combined with 
the autonomous form of the system, $\Gamma$ reflects symmetries of $D$ and/or possible coupling  in the corresponding network of identical oscillaltors, and $\mathbb Z_2$ is related to the oddness of $f$. We also present a concrete illustrating example with $\Gamma:= D_8$, where $D_8$ stands for the dihedral group of order 16. Our computations are essentially based on new group-theoretical computational algorithms, which were implemented in the specially created Hao-Pin Wu (see \cite{Pin}) package {\it EquiDeg} for the  GAP system.

 \vs

 {\bf (c) Overview.} After the Introduction, the paper is organized as follows. 
 In Section \ref{sec:a-priori}, we establish a priori bounds for solutions to problem \eqref{eq:1} in the space $C^2(S^1;\bfV)$ (actually, we assume that values of solutions ``live" in a given open bounded symmetric domain $D \subset \bfV$; cf. \eqref{eq:x-t-form} for the precise formulation). In Section 
 \ref{sec:operator-reform}, we reformulate problem \eqref{eq:x-t-form} as an 
 $O(2) \times \Gamma \times \mathbb Z_2$-equivariant fixed point problem  in $C^2(S^1;\bfV)$ and present an {\it abstract} equivariant degree based result. This result can be effectively applied to concrete symmetric systems only if a ``workable" formula for the degrees associated can be elaborated. The latter is a subject of Sections \ref{sec:degree-computation} and \ref{sec:degree-computation-D}. In Section \ref{sec:degree-computation},  we combine the product property of the equivariant degree with equivariant spectral data of the linearization of the operator equation at the origin in order to reduce the degree computations to products of appropriate basic degrees. In Section \ref{sec:degree-computation-D}, we compute the degree of the operator involved on the boundary of the domain provided by the a priori bound. Actually, this is the place where the curvature of $\partial D$ and $G$-equivariant degree come together: here we essentially use admissible homotopies considered in \cite{Amster}.
In Section \ref{sec:main-results-example}, based on the results of Sections \ref{sec:a-priori}--\ref{sec:degree-computation-D}, we present our main results (see Theorems \ref{th:main1} and \ref{th:main2}) expressed in terms of the function $\eta$ (cf. ($\eta_1$)--($\eta_6$)) and right-hand side of  
\eqref{eq:x-t-form} only. 
As an example, we consider $\bfV = \mathbb R^2$ equipped with the natural $\Gamma:= D_8$-representation and explicitly describe a $D_8$-invariant function $\eta: \bf V \to \mathbb R$ giving rise to the $D_8$-invariant domain $D$ with $\partial D$ admitting points with both positive and negative curvature. 
Using $\nabla \eta$, we explicitly describe $f$ in  \eqref{eq:x-t-form} satisfying
(R), \ref{c1}--\ref{c5}, {\rm ($A_6'$)}. We conclude the paper with an Appendix related to amalgamated notation for subgroups of group products, 
equivariant topology jargon and equivariant degree background.

  \section{A Priori Bound and $C$-touching}\label{sec:a-priori}
  
   \subsection{A priori bound for the first derivative}
   
In this subsection, we establish  a priori bounds for the first and second derivatives of solutions to problem \eqref{eq:1} living in $D$. The lemma following below  can be traced back to \cite{Hart}, where the case of ODEs was studied for $D = B_R(0)$. In our proof, we combine the ideas from \cite{Amster} (where Hartman's result was extended to arbitrary $D$)  with \cite{BHKX} (where the case of equivariant  DDEs and $D = B_R(0)$ was considered). To simplify our notations,
given a function $x : \mathbb R \to \bfV$, put $\bold{x}_t:= \left(x\left(t-\tau_1\right),\dots,x\left(t- \tau_{m-1}\right)\right)$, so that we are interested in the problem 
\begin{align}\label{eq:x-t-form}
\left\{  \begin{aligned}
 \ddot{x}(t)&=f\left(x(t), \bold{x}_t, \dot x(t)\right),\quad t\in\bbR,\;x(t)\in \overline{D} \subset \mathbf{V} = \mathbb R^n,\\
 x(t)&=x(t+p),\quad\dot{x}(t) = \dot{x}(t+p),
 \end{aligned}\right.
 \end{align}
 where $p:= 2\pi$.

   \vs 
   \begin{lemma}\label{lm:2} Let $\eta : \bfV \to \mathbb R$ satisfy ($\eta_1$), ($\eta_4$)--($\eta_6$), and let $f : \bfV \times \bfV^{m-1} \times \bfV \to \bfV$
   be a continuous map satisfying \ref{c5} and \ref{c6}  (resp. \ref{c5} and {\rm ($A_6'$)}). If  $x = x(t)$ is a solution to \eqref{eq:x-t-form} such that $|x(t)|\le R$ for $t\in \br$, then there exists a constant $M:=M(\phi,\eta, K,p,R)$ (resp.  $M:=M(\phi,\alpha, K,p,R)$) such that 
	\begin{equation}\label{eq:ap-2}
	\forall_{t\in \br}\;\;\;|\dot x(t)|\le M.
\end{equation}
	\end{lemma}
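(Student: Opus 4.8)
The plan is to adapt the classical Hartman--Nagumo argument to the delay setting, following the outline of \cite{Amster} and \cite{BHKX}. Since $x$ is $p$-periodic and $C^2$, the function $t \mapsto |\dot x(t)|^2$ attains a minimum, hence there exists $t_0$ with $\langle \ddot x(t_0), \dot x(t_0)\rangle = 0$; by the mean value theorem applied to $t \mapsto \langle \dot x(t), x(t)\rangle$ on a period (or simply by periodicity of $x$), there is a point $t_1$ where $\langle \dot x(t_1), x(t_1)\rangle$ has a critical point, so $|\dot x(t_1)|^2 = -\langle \ddot x(t_1), x(t_1)\rangle$. The strategy is then to integrate the differential inequality coming from \ref{c6} (resp. ($A_6'$)) to first produce an $L^1$-bound on $|\dot x|$ over a period, and then upgrade this to a pointwise bound using \ref{c5}.

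First I would establish the $L^1$-bound. Working from ($A_6'$), one has $|\ddot x(t)| = |f(x(t),\bold x_t,\dot x(t))| \le \alpha(\langle x(t), \ddot x(t)\rangle + |\dot x(t)|^2) + K$ pointwise, since $|x(t)|,|\bold x_t| \le R$ by hypothesis. Now $\langle x(t), \ddot x(t)\rangle = \frac{d}{dt}\langle x(t),\dot x(t)\rangle - |\dot x(t)|^2$, so integrating over one period $[0,p]$ kills the total-derivative term by periodicity, yielding $\int_0^p |\ddot x(t)|\,dt \le Kp$. Hence $\int_0^p |\dot x|^2\,dt$ is controlled: integrating $\frac{d}{dt}\langle x,\dot x\rangle = |\dot x|^2 + \langle x,\ddot x\rangle$ over a period gives $\int_0^p |\dot x|^2 = -\int_0^p \langle x,\ddot x\rangle \le R\int_0^p|\ddot x| \le RKp$. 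Combined with the existence of $t_1$ with $|\dot x(t_1)|^2 \le R\,|\ddot x(t_1)|$... actually more simply: since $\int_0^p|\dot x|^2 \le RKp$, there is a point $t_*$ with $|\dot x(t_*)|^2 \le RK$, so $|\dot x(t_*)| \le \sqrt{RK} =: M_0$. The variant using \ref{c6} is analogous, with $\nabla^2\eta(x)(z,z) + \langle f,\nabla\eta(x)\rangle$ playing the role of a total derivative $\frac{d}{dt}\langle \nabla\eta(x(t)),\dot x(t)\rangle$ along solutions, again integrating to $\int_0^p|\ddot x|\,dt \le Kp$; the $\eta$-dependence enters the constant through bounds on $\nabla\eta,\nabla^2\eta$ on $\overline{B_R(0)}$.

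The pointwise bound is then obtained from the Nagumo growth condition \ref{c5}: $|\ddot x(t)| \le \phi(|\dot x(t)|) = A + B|\dot x(t)|^2$. For any $t$, integrating along the segment from $t_*$ to $t$ and using $\big|\frac{d}{dt}|\dot x|\big| \le |\ddot x| \le \phi(|\dot x|)$ gives $\left|\int_{|\dot x(t_*)|}^{|\dot x(t)|} \frac{ds}{\phi(s)}\right| \le |t - t_*| \le p$. Since $\int_0^\infty \frac{ds}{A+Bs^2} = \frac{\pi}{2\sqrt{AB}} < \infty$, the estimate $\int_{M_0}^{|\dot x(t)|}\frac{ds}{\phi(s)} \le p$ forces $|\dot x(t)| \le M$ for a constant $M = M(\phi, M_0, p)$ depending only on $A,B,M_0,p$ — hence only on $\phi,\alpha$ (or $\phi$, $K$, $\eta$), $K$, $p$, $R$, as claimed. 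This is the standard Nagumo closing-off step.

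The main obstacle I anticipate is purely bookkeeping rather than conceptual: in the delay setting one must be careful that the hypothesis $|x(t)| \le R$ for all $t$ genuinely gives $|\bold x_t| \le R$ (it does, since each component is $x$ evaluated at a shifted time), so that \ref{c5}, \ref{c6}, ($A_6'$) apply with their stated constants uniformly in $t$; no smallness or compatibility between the delays $\tau_k$ is needed because the delayed arguments only ever enter through the bound $|\bold x_t|\le R$. The other point requiring a little care is the \ref{c6}-variant: one needs that $\nabla^2\eta(x(t))(\dot x(t),\dot x(t)) + \langle f(x(t),\bold x_t,\dot x(t)),\nabla\eta(x(t))\rangle$ equals $\frac{d}{dt}\big(\nabla\eta(x(t))\cdot\dot x(t)\big)$ along any solution of \eqref{eq:x-t-form}, which is just the chain rule together with $\ddot x = f(\cdots)$; then periodicity of $t\mapsto \nabla\eta(x(t))\cdot\dot x(t)$ kills the integral of this derivative. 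Everything else is the classical one-dimensional Nagumo estimate applied to $t\mapsto|\dot x(t)|$.
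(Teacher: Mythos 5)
Your argument up to the construction of the starting point $t_*$ is sound and takes a genuinely different route from the paper: the paper uses the integration-by-parts identity $p\dot x(0) = -\int_0^p(p-s)\ddot x(s)\,ds$ together with its mirror to get $|\dot x(0)| \le \tfrac{1}{2}Kp$ directly, whereas you integrate the differential inequality over a period to get $\int_0^p|\ddot x|\le Kp$, then an $L^2$ bound $\int_0^p|\dot x|^2 \le RKp$, then a point with $|\dot x(t_*)|\le\sqrt{RK}$. Both are legitimate.

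The gap is in the Nagumo closing-off step. You write
\[
\Bigl|\int_{|\dot x(t_*)|}^{|\dot x(t)|}\frac{ds}{\phi(s)}\Bigr|\le p,
\]
and then assert that because $\int_0^\infty\frac{ds}{A+Bs^2}=\frac{\pi}{2\sqrt{AB}}<\infty$, this ``forces $|\dot x(t)|\le M$''. You have this backwards: it is the \emph{convergence} of $\int^\infty\frac{ds}{\phi(s)}$ that ruins the argument. If $\int_{M_0}^\infty\frac{ds}{\phi(s)} \le p$ (which happens, for instance, when $A$ or $B$ is large), the inequality $\int_{M_0}^{w}\frac{ds}{\phi(s)}\le p$ is satisfied by \emph{every} $w\ge M_0$ and gives no bound at all. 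The whole point of the Hartman--Nagumo device is that with quadratic growth $\phi(s)=A+Bs^2$, the relevant integrand carries an extra factor of $u$: one integrates
\[
\Phi(w):=\int_0^w\frac{u\,du}{\phi(u)},\qquad \Phi(w)\to\infty\ \text{as}\ w\to\infty,
\]
not $\int\frac{du}{\phi(u)}$. The extra $u$ appears naturally because the quantity one can actually control is $\frac{\langle\dot x,\ddot x\rangle}{\phi(|\dot x|)}$, and $\langle\dot x,\ddot x\rangle\,dt = u\,du$ under $u=|\dot x|$. The price is that you then need to bound $\int_{t_*}^t\bigl|\frac{\langle\dot x,\ddot x\rangle}{\phi(|\dot x|)}\bigr|\,ds\le\int_{t_*}^t|\dot x(s)|\,ds$, i.e.\ you need an $L^1$ bound on $|\dot x|$, not merely the length $p$ of the interval. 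Your own $L^2$ bound $\int_0^p|\dot x|^2\le RKp$ delivers this via Cauchy--Schwarz ($\int_0^p|\dot x|\le p\sqrt{RK}$), so the fix is small: replace $\int\frac{du}{\phi(u)}$ by $\Phi$, bound $|\Phi(|\dot x(t)|)-\Phi(|\dot x(t_*)|)|$ by $\int_0^p|\dot x|\le p\sqrt{RK}$, and invert $\Phi$. As written, though, the proof does not close.

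A smaller remark on the \ref{c6} variant: you do not need separate bounds on $\nabla\eta$ or $\nabla^2\eta$ over $\overline{B_R(0)}$. The right-hand side $\nabla^2\eta(x)(\dot x,\dot x)+\langle f,\nabla\eta(x)\rangle$ is exactly $\frac{d^2}{dt^2}\eta(x(t))$ along a solution, so after two integrations only $\max_{\overline D}|\eta|$ enters, which is how the paper obtains the constant $\wt R$. Your phrasing suggests a dependence on derivative bounds for $\eta$ that is not actually needed.
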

  \begin{proof} We only prove Lemma \ref{lm:2} assuming that  $f$ satisfies \ref{c5} and \ref{c6}.  The case when $f$ satisfies \ref{c5} and {\rm ($A_6'$)} was treated in \cite {BKLN} (see also Remark \ref{rem:eq:M-from-Xiaoli}).
 
 \vs
 Let $x = x(t)$ be a $C^2$-smooth solution to \eqref{eq:x-t-form}. Since $|x(t)|\le R$, one has $|x(t-\tau_k)|\le R$ 
  for all $k= 1,..., m-1$,
  so that $|\bold{x}_t| \leq R$. 
	Put $\boldsymbol \eta(t):=\eta(x(t))$, $t\in \br$. Then by \ref{c6},  one has 
	\begin{align*}
	|\ddot x(t)| &= |f(x(t), \bold x_t,\dot x(t))|\le  \nabla^2\eta(x(t))(\dot x(t),\dot x(t)) +\left\langle f(x(t),\bold x_t,\dot x(t)), \nabla \eta(x(t))\right\rangle +K\\
	& = \boldsymbol  \eta''(t)+K.
	\end{align*}
	Thus, 
	\begin{equation}\label{eq:ap-3}
	\forall_{t\in \br}\;\; \;\; |\ddot x(t)|\le  \boldsymbol\eta''(t)+K.
	\end{equation}
	
			Next, by using integration by parts and the fact that $x(t)$ is $p$-periodic, one calculates: 
\begin{align*}
 \int_t^{t+p} (t+p-s)\ddot x(s)ds&=  (t+p-s)\ddot x(s)\Big|_{t}^{t+p} +\int_{t}^{t+p} \dot x(s)ds= x(t+p)-x(t) - p \dot x(t)= -p\dot x(t) 
 \end{align*}
i.e. 
\begin{equation}\label{eq:ap-4}
\forall_{t\in \br}\;\;\;   p\dot x(t)= - \int_t^{t+p}\;\; (t+p-s)\ddot x(s)ds.
\end{equation}
Similarly,
\[
p\dot x(t) = x(t)-x(t-p)-\int_{t-p}^t (t-p-s)\ddot x(s)ds=-\int_{t-p}^t (t-p-s)\ddot x(s)ds,\]
i.e.
\begin{equation}\label{eq:ap-5}
 p\dot x(t)=-\int_{t-p}^t (t-p-s)\ddot x(s)ds.
\end{equation}
Then by \eqref{eq:ap-4}, one obtains
\[
p\dot x(0)\; =\;-\int_0^{p} \; (p-s)\ddot x(s)ds,
\]  
and by \eqref{eq:ap-3} and $p$-periodicity of $x$, one has:
\begin{align*}
p|\dot x(0)|\;
&\le \;\int_0^{p}\;(p-s)|\ddot x(s)|ds\le \; \int_0^{p}\;(p-s)\Big(\boldsymbol\eta''(s)+K\Big)ds\\
&=\int_0^{p}\;(p-s)\boldsymbol \eta''(s)ds+K\int_0^{p}\;(p-s)ds = -p \boldsymbol\eta'(0)+\frac {1}{2}Kp^2,
\end{align*}
i.e.
\begin{equation}\label{eq:ap-6}
 p|\dot x(0)|\; \le  -p\boldsymbol \eta'(0)+\frac {1}{2}Kp^2.
\end{equation}
Similarly, by \eqref{eq:ap-5}, one obtains
\begin{equation}\label{eq:ap-7}
p|\dot x(0)| \le p\boldsymbol \eta' (0)+\frac {1}{2}Kp^2.
\end{equation}
By adding inequalities \eqref{eq:ap-6} and \eqref{eq:ap-7},  one obtains 
\begin{equation}\label{eq:ap-0}
2p|\dot x(0)|\le Kp^2\quad \Leftrightarrow\quad 
|\dot x(0)|\le \frac {1}{2}Kp.\end{equation}
Moreover (see \eqref{eq:ap-4} and \eqref{eq:ap-3}), one has:
\begin{align*}
p|\dot x(t)|
&\le \int_t^{t+p}\;(t+p-s)|\ddot x(s)|ds\le \int_t^{t+p}\;(t+p-s)\Big(\boldsymbol\eta''(s)+K\Big)ds= - p\boldsymbol \eta'(t)+\frac {1}{2}Kp^2.
\end{align*}
The last inequality, together with condition \ref{c5} imply 
\begin{equation} \label{eq:ap-8}
\frac {\langle \dot x(t),   \ddot x(t) \rangle}{\phi(|\dot x(t)|)}\le \frac {|\langle \dot x(t) ,  \ddot x(t)\rangle|} {\phi(|\dot x(t)|)}\le \frac{|\dot x(t)||\ddot x(t)|}{\phi(|\dot x(t)|)}\le |\dot x(t)|\le \frac {1}{2}Kp-\boldsymbol \eta'(t).
\end{equation}
Next, by integrating inequality \eqref{eq:ap-8}, one obtains for $t\in [0,p]$:
\begin{equation}\label{eq:integr-ineq}
\left|\int_0^{t}\;\frac {\langle \dot x(s) ,  \ddot x(s)\rangle}{\phi(|\dot x(t)|}ds\right|
 \le \int_0^{t}\left[\frac {1}{2}Kp-\boldsymbol \eta'(s)\right]ds 
=\frac {1}{2}Kpt-\boldsymbol \eta(t)+\boldsymbol \eta(0)\le \frac {K}{2}p^2+2\wt R, 
\end{equation}
where $\wt R:=\max\{|\eta(x)|: x\in \overline D\}$.
On the other hand, by making substitution $u = |\dot{x}(s)|$, one obtains:
\begin{equation}\label{eq:integr-rep}
\int_0^{t}\;\;\frac {\langle \dot x(s) , \ddot x(s)\rangle}{\phi (|\dot x(s)|)}ds=\int_{ |\dot x(0)|}^{|\dot x(t)|}\frac {udu}{\phi (u)}.
\end{equation}
Put $\displaystyle \Phi (w):=\;\int_0^{w}\;\frac {udu}{\phi(u)}$,  then
\begin{equation}\label{eq:estim-Phi}
\left|\int_{ |\dot x(0)|}^{|\dot x(t)|}\;\;\frac {udu}{\phi (u)}\right|= |\Phi(|\dot x(t)|)-|\Phi(|\dot x(0)|)|.
\end{equation}
Therefore (cf. \eqref{eq:integr-ineq}--\eqref{eq:estim-Phi}),
\begin{align*}
 |\Phi(|\dot x(t)|)-\Phi (|\dot x(0)|)|\le \frac{K}{2}p^2+2\wt R, 
\end{align*}
in particular,
\begin{equation}\label{eq:module-ineq}
\Phi (|\dot x(t)|) \le \frac {1}{2}Kp^2+2\wt R+\Phi (|\dot x(0)|)|. 
\end{equation}
By \ref{c5},
$\displaystyle \lim_{w \to \infty} \Phi(w) = \infty$, hence,  $\Phi:[0,\infty)\to [0,\infty)$ is a continuous monotonic bijective function.  
Therefore (see  \eqref{eq:ap-0} and \eqref{eq:module-ineq}), the inequality
\begin{equation*}
\Phi (|\dot x(t)|)   
\le \frac{1}{2}Kp^2+2\wt R+\Phi\left(\frac{1}{2}Kp\right),
	 \end{equation*}
	 implies	 
\begin{equation}\label{estim-M}	
|\dot x(t)|\le \Phi^{-1}\left[\frac{1}{2}Kp^2+2\wt R+\Phi\left(\frac{1}{2}Kp\right)\right]=:M,
\end{equation}
and the required estimate follows.
\end{proof}

\begin{remark}\label{rem:eq:M-from-Xiaoli}
Observe that if \ref{c6} is replaced with {\rm ($A_6'$)}, then the following estimate for $\dot x(t)$ was established in \cite {BKLN}:
\begin{equation}\label{eq:M-from-Xiaoli}
|\dot x(t)|\le \Phi^{-1}\left[\frac{1}{2}Kp^2+\alpha R^2+\Phi\left(\frac{1}{2}Kp\right)\right]=:M
\end{equation}
\end{remark}

One has the following immediate consequence of Lemma \ref{lm:2}.

\begin{lemma}\label{lem:second-der-est}
Under the assumptions of Lemma \ref{lm:2}, there exists $N > 0$ such that for any $C^2$-smooth solution $x= x(t)$ to \eqref{eq:x-t-form},
one has 
\begin{equation}\label{eq:esti-N}
\forall_{t \in \mathbb R} \quad  \left| \ddot{x}(t) \right| \le N.
\end{equation}
\begin{proof}
Let $M$ be a constant provided by Lemma \ref{lm:2}. Put
\begin{equation}\label{eq:N}
N:= \max_{x\in \overline{D},\;  \bold y\in \overline{D}^{m-1}, \; |z| \le M}|f(x,\bold y,z)|.
\end{equation}
Let $x = x(t)$ be a $C^2$-smooth solution to \eqref{eq:x-t-form}. Then,
\begin{equation*}
\forall_{t \in \mathbb R} \quad \left| \ddot{x}(t)\right| = \left| f(x(t), \bold{x}_t, \dot{x}(t)\right| \le N.
\end{equation*}  
\end{proof} 

\end{lemma}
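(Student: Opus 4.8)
The plan is to use Lemma \ref{lm:2} directly, which gives us a uniform bound $M$ on $|\dot x(t)|$ for solutions $x$ of \eqref{eq:x-t-form} whose values lie in $\overline D$. Since such solutions also satisfy $x(t) \in \overline D$ and hence $|\bold x_t| \le R$ (because $\overline D \subset B_R(0)$ by ($\eta_6$)), the triple $(x(t), \bold x_t, \dot x(t))$ lives, for every $t$, inside the compact set $\overline D \times \overline D^{\,m-1} \times \overline{B_M(0)} \subset \bfV \times \bfV^{m-1} \times \bfV$.

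First I would invoke continuity of $f$ together with the compactness of this set to conclude that $f$ attains a finite maximum on it; this is exactly the constant $N$ defined in \eqref{eq:N}. Then, since $x = x(t)$ solves the differential equation in \eqref{eq:x-t-form}, one has $\ddot x(t) = f(x(t), \bold x_t, \dot x(t))$ pointwise, and the right-hand side is bounded in norm by $N$ for every $t \in \mathbb R$ by the choice of $N$. This yields the desired estimate \eqref{eq:esti-N}.

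There is essentially no obstacle here: the statement is a routine consequence of Lemma \ref{lm:2}, the equation itself, and elementary compactness. The only point requiring a word of care is that the bound $N$ depends only on $M$ (hence only on the data $\phi, \eta, K, p, R$ via Lemma \ref{lm:2}) and on $f$ and $\overline D$, so it is genuinely uniform over the class of solutions considered — but this is immediate from the definition \eqref{eq:N}, which involves no reference to the particular solution $x$.
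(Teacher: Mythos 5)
Your proposal is correct and follows essentially the same route as the paper: bound $|\dot x|$ by $M$ via Lemma \ref{lm:2}, define $N$ as the maximum of $|f|$ over the resulting compact set, and read off the bound on $|\ddot x|$ from the equation itself. The paper is slightly more terse (it does not spell out the compactness/continuity justification for the maximum being finite), but the substance is identical.
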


\subsection{$C$-Touching}
In this subsection, essentially following \cite{Amster} and \cite{do Carmo}, we show that solutions to problem \eqref{eq:x-t-form} cannot touch 
$C:= \partial D$ provided that $f$ satisfies \ref{c4}. More precisely, 
\begin{lemma}\label{lm:1}
Let $\eta : \bfV \to \mathbb R$ satisfy ($\eta_1$), ($\eta_4$)--($\eta_6$), and let $f : \bfV \times \bfV^{m-1} \times \bfV \to \bf V$ be a continuous map satisfying \ref{c4}. Let  $x:\br\to \bfV$ be a $C^2$-smooth $2\pi$-periodic function such that:
\begin{itemize}
\item[(i)]  $x(t) \in \overline{D}$ for all $t\in \br$;

\item[(ii)] $x(t_o)\in C$ for some $t_o\in \mathbb R$. 
\end{itemize}
Then,  $x$ is not a solution to  problem \eqref{eq:x-t-form}. 

\end{lemma}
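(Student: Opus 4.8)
The plan is to argue by contradiction: suppose $x$ is a solution to \eqref{eq:x-t-form} touching $C$ at time $t_o$, and derive a violation of \ref{c4}. Since $x(t)\in\overline D$ for all $t$ and $x(t_o)\in C$, the smooth function $\boldsymbol\eta(t):=\eta(x(t))$ attains its maximum value $0$ at $t=t_o$ (interior maxima in the range of $\boldsymbol\eta$, as $\eta\le 0$ on $\overline D$). The first step is to extract the two standard necessary conditions at an interior maximum: $\boldsymbol\eta'(t_o)=0$ and $\boldsymbol\eta''(t_o)\le 0$. Computing via the chain rule, $\boldsymbol\eta'(t)=\langle\nabla\eta(x(t)),\dot x(t)\rangle$, so $\boldsymbol\eta'(t_o)=0$ gives $\langle\nabla\eta(x(t_o)),\dot x(t_o)\rangle=0$, i.e. $\dot x(t_o)\perp n_{x(t_o)}$ (recall \eqref{eq:n-x}). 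This puts us exactly in the hypothesis regime of \ref{c4} with $x:=x(t_o)$, $\bold y:=\bold x_{t_o}$ (which satisfies $|\bold y|\le R$ since $x$ lives in $\overline D\subset B_R(0)$), and $z:=\dot x(t_o)\perp n_x$.

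The second step is to expand $\boldsymbol\eta''(t_o)$ and relate it to the second fundamental form. Differentiating again, $\boldsymbol\eta''(t)=\nabla^2\eta(x(t))(\dot x(t),\dot x(t))+\langle\nabla\eta(x(t)),\ddot x(t)\rangle$, and substituting the equation $\ddot x(t_o)=f(x(t_o),\bold x_{t_o},\dot x(t_o))$ yields
\[
\boldsymbol\eta''(t_o)=\nabla^2\eta(x(t_o))(z,z)+\langle\nabla\eta(x(t_o)),f(x(t_o),\bold x_{t_o},z)\rangle\le 0.
\]
Writing $\nabla\eta(x(t_o))=|\nabla\eta(x(t_o))|\,n_{x(t_o)}$ and dividing by the positive scalar $|\nabla\eta(x(t_o))|$, this rearranges to
\[
\langle f(x(t_o),\bold x_{t_o},z),n_{x(t_o)}\rangle\le -\frac{\nabla^2\eta(x(t_o))(z,z)}{|\nabla\eta(x(t_o))|}.
\]
The crux is then to identify the right-hand side with $-\mathbb I_{x(t_o)}(z)$, i.e. to prove that for $z$ tangent to $C$ at a point $x\in C$ one has $\mathbb I_x(z)=\dfrac{\nabla^2\eta(x)(z,z)}{|\nabla\eta(x)|}$. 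This is the classical formula expressing the second fundamental form of a level set $\eta^{-1}(0)$ via the ambient Hessian of $\eta$; I would prove it using \eqref{eq:2}: pick a curve $c:(-\ve,\ve)\to C$ with $c(0)=x$, $\dot c(0)=z$, differentiate the identity $\langle\nabla\eta(c(t)),\dot c(t)\rangle\equiv 0$ (valid since $c$ stays in $C$) at $t=0$, getting $\nabla^2\eta(x)(z,z)+\langle\nabla\eta(x),\ddot c(0)\rangle=0$; on the other hand $\mathbb I_x(z)=-\langle d\nu_x(z),z\rangle$ and $d\nu_x(z)=\frac{d}{dt}\big|_0\frac{\nabla\eta(c(t))}{|\nabla\eta(c(t))|}$, whose pairing with $z=\dot c(0)$ — using again that the $\frac{d}{dt}|\nabla\eta|$ term is killed by $z\perp\nabla\eta$ — equals $\frac{1}{|\nabla\eta(x)|}\langle\nabla^2\eta(x)(z),z\rangle$. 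Combining the two displays gives $\mathbb I_x(z)=\nabla^2\eta(x)(z,z)/|\nabla\eta(x)|$.

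Putting the pieces together, we have derived $\langle f(x(t_o),\bold x_{t_o},z),n_{x(t_o)}\rangle\le\mathbb I_{x(t_o)}(z)$ with $|\bold x_{t_o}|\le R$ and $z=\dot x(t_o)\perp n_{x(t_o)}$, which directly contradicts the strict inequality \eqref{eq:A4} in \ref{c4}. Hence no such solution exists. \textbf{Main obstacle.} The only genuinely non-routine point is the geometric identity relating $\mathbb I_x$ to the Hessian of $\eta$ — everything else is calculus-of-one-variable bookkeeping — and care must be taken with orientations/signs (the outer normal convention, whether $d\nu_x$ or its negative is the shape operator) so that the inequality lands on the correct side of \ref{c4}; this is also exactly where the reference to \cite{do Carmo} is doing its work.
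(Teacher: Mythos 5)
Your argument reaches the correct contradiction with condition \ref{c4}, but by a genuinely different route from the paper's. The paper places the trajectory in a tubular neighbourhood of $C$ near the contact time, writing $x(t)=\alpha(t)+\beta(t)\nu(\alpha(t))$ with $\alpha(t)\in C$ and $\beta(t)\le 0$, and obtains $\langle f,n_{x(t_o)}\rangle\le\mathbb I_{x(t_o)}(\dot x(t_o))$ from $\ddot\beta(t_o)\le 0$ together with the Weingarten relation; you instead work with the scalar $\boldsymbol\eta(t)=\eta(x(t))$, extract the interior-maximum conditions $\boldsymbol\eta'(t_o)=0$ (hence $\dot x(t_o)\perp n_{x(t_o)}$) and $\boldsymbol\eta''(t_o)\le 0$, substitute the equation for $\ddot x(t_o)$, and invoke the level-set expression of $\mathbb I_x$ through the ambient Hessian of $\eta$. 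Your route makes the tangency of $\dot x(t_o)$ fully explicit (the paper leaves it implicit, via $\dot\beta(t_o)=0$ in the expansion of $x(t)$) and stays closer to one-variable calculus; the paper's route reads the second fundamental form off the normal coordinate $\beta$ directly and so avoids a separate lemma relating $\mathbb I_x$ to $\nabla^2\eta$.

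There is, however, an internal sign slip in your write-up of the level-set identity. Your computation $\langle d\nu_x(z),z\rangle=\nabla^2\eta(x)(z,z)/|\nabla\eta(x)|$ for tangent $z$ is correct; since the paper defines $\mathbb I_x(z)=-\langle d\nu_x(z),z\rangle$, this gives
\[
\mathbb I_x(z)=-\frac{\nabla^2\eta(x)(z,z)}{|\nabla\eta(x)|}
\]
(sanity check: for $\eta(x)=|x|^2-1$ one has $d\nu_x=\id$ on the unit sphere, so $\mathbb I_x(z)=-|z|^2$, which matches). You instead state the identity without the minus sign and claim the right-hand side of your inequality should be identified with $-\mathbb I_{x(t_o)}(z)$; both statements are wrong, but the two errors cancel, so that your final displayed inequality $\langle f,n_{x(t_o)}\rangle\le\mathbb I_{x(t_o)}(z)$ is exactly what the corrected computation delivers and does contradict \ref{c4}. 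You flagged signs as the main pitfall, and indeed that is where the slip sits, although the conclusion survives.
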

 
\begin{proof} Assume for contradiction that $x:\br\to \bfV$ is a $C^2$-smooth $2\pi$-periodic solution to  problem \eqref{eq:x-t-form} satisfying (i) and (ii).
Take a tubular neighborhood of $C$ around the point $x(t_o)\in C$. Then, for a sufficiently small $\ve > 0$, one can represent $x$ as follows: 
\begin{equation}\label{eq:tub-neghb}
x(t)=\alpha(t) +\beta(t)\nu(\alpha(t)) \quad\quad\quad 
 (\alpha(t)\in C, \;\; \beta(t)\le 0, \;\; t \in (t_o-\ve,t_o+\ve)).
\end{equation}
Combining \eqref{eq:tub-neghb} with the fact that $x = x(t)$ is a solution to  \eqref{eq:x-t-form} and using $n_{\alpha(t)} \perp \dot{\alpha}(t)$ and 
$n_{\alpha(t)} \perp {d \over dt}\left(\nu(\alpha(t))\right)$, 
one obtains:
\begin{align}
\langle f(x(t),\bold x_t,\dot x(t)) ,  n_{\alpha(t)} \rangle &= \langle \ddot x(t) ,  n_{\alpha(t)} \rangle =\tfrac d{dt} \langle \dot x(t) ,  n_{\alpha(t)}\rangle  - \langle \tfrac d{dt} \nu(\alpha(t)) ,  \dot x(t)\rangle \notag\\
&= {d \over dt} \Big\langle {d \over dt} \Big[ \alpha(t) +\beta(t)\nu(\alpha(t))\Big], n_{\alpha(t)} \Big\rangle   - \langle \tfrac d{dt} \nu(\alpha(t)) ,  \dot x(t)\rangle \notag\\
&= {d \over dt} \Big\{\langle \dot{\alpha}(t),  n_{\alpha(t)} \rangle + \dot{\beta}(t) \langle \nu(\alpha(t),  n_{\alpha(t)} \rangle + 
\beta(t) \langle \tfrac d{dt} \nu(\alpha(t)),  n_{\alpha(t)}\rangle \Big\}  - \langle \tfrac d{dt} \nu(\alpha(t)) ,  \dot x(t)\rangle\notag\\
&= {d \over dt} \Big\{ \dot{\beta}(t)  \|n_{\alpha(t)}\|^2 \Big\}  - \langle \tfrac d{dt} \nu(\alpha(t)) ,  \dot x(t)\rangle\notag\\
&= \ddot{\beta}(t) - \langle \tfrac d{dt} \nu(\alpha(t)),  \dot x(t)\rangle. \label{eq:beta-est}
\end{align}
Since $\beta(t)$ achieves its local maximum at $t_o$, one has $\ddot{\beta}(t_o)\le 0$. Combining this with  \eqref{eq:beta-est} yields:
\begin{equation*}
\langle f(x(t_o),\bold x_{t_o},\dot x(t_o)) ,  n_{x(t_o)}\rangle \le 
 - \langle \tfrac d{dt} \nu(\alpha(t)) ,  \dot x(t)\rangle\Big |_{t=t_o}
= \mathbb I_{x(t_o)} (\dot x(t_o)),\\
\end{equation*} 
which contradicts condition \ref{c4}.
   \end{proof}  
   \vs

  \section{Operator Reformulation in Function Spaces}\label{sec:operator-reform}
   \subsection{Spaces}
  Denote by  $C_{2\pi}(\bbR;\bfV)$  the space of continuous $2\pi$-periodic functions equipped with the norm
  \begin{equation}
   \|x\|_{\infty}=\sup_{t\in\bbR}|x(t)|,\quad x\in C_{2\pi} (\bbR;\bfV).
  \end{equation}
  Denote by  $\scrE :=C^2_{2\pi}(\bbR,\bfV)$ the  space of $C^2$-smooth  $2\pi$-periodic functions from $\mathbb{R}$ to $\bfV$
  equipped with the norm
  \begin{align}
   \norm{x}_{\infty,2}&=\mbox{max}\{\|x\|_{\infty},\|\dot{x}\|_{\infty},\|\ddot{x}\|_{\infty}\}.
  \end{align}
Let $O(2)$ denote the group of orthogonal $2\times 2$ matrices. Notice that $O(2)= SO(2)\cup SO(2) \kappa$, where $\kappa=\begin{bmatrix}1&0\\0&-1\end{bmatrix}$, and $SO(2)$ denotes the group of rotations $\begin{bmatrix}\cos\tau&-\sin\tau\\\sin\tau&\cos\tau\end{bmatrix} $ which can be identified with $e^{i\tau}\in S^1\subset\mathbb{C}$. Notice that $\kappa e^{i\tau}=e^{-i\tau}\kappa$.\par	
  
Put $G:=O(2)\times \Gamma\times\mathbb{Z}_2$ and define the $G$-action on $\scrE$ by
    \begin{align}
    (e^{i\theta}, \gamma,\pm 1)x(t)&:=\pm\gamma x(t+\theta),\label{action1}\\
    (e^{i\theta}\kappa, \gamma,\pm 1)x(t)&:=\pm\gamma x(- t+ \theta)\label{action2},
  \end{align}
 where $x\in\scrE,\; e^{i\theta},\kappa \in O(2),\; \gamma \in \Gamma$ and $ \pm 1 \in \mathbb Z_2$. Clearly, $\scrE$ is  an isometric  Banach 
 $G$-representation. 
 In a standard way, one can identify a $2\pi$-periodic function $x:\bbR\rightarrow \bf V$ with a function $\tilde{x}: S^1\rightarrow \bfV$, so one can write $C^2(S^1,\bfV)$ instead of $C^2_{2\pi}(\bbR,\bfV)$. Similar to \eqref{action1}-\eqref{action2} formulas define isometric $G$-representations on the spaces 
of periodic functions $C_{2\pi}(\bbR,\bfV)$ and $L^2_{2\pi}(\mathbb R;V)$ to which appropriate identifications are applied. 

Let us describe the $G$-isotypic  decomposition of $\scrE $. Consider, first, $\scrE $ as an $O(2)$-representation
corresponding to its Fourier modes: 
\begin{align}\label{dcp}
\scrE=\overline{\bigoplus\limits_{k=0}^{\infty}\mathbb{V}_k},\quad\mathbb{V}_k:=\{\cos(kt)u+ \sin(kt) v:u,\, v\in \bfV\},
\end{align}
where each $\mathbb V_k$, for $k\in \bn$, is equivalent to the complexification $\bfV^c := \bfV \oplus i \bfV$ (as a {\it real} $O(2)$-representation)  of 
$\bf V$, where the rotations $e^{i\theta}\in SO(2)$ act on vectors $\bold z\in \bfV^c$ by $e^{i\theta}(\bold z) :=e^{-ik\theta}\cdot \bold z$ (here `$\cdot$'  stands for complex multiplication) and $\kappa \bold z:=\overline {\bold z}$. Indeed, the linear isomorphism $\vp_k : \bfV^c\to \mathbb V_k$ given by 
\begin{equation}\label{eq:complexification}
\vp_k(x+iy):= \cos(kt) u + \sin(kt) v, \quad u,\, v\in \bfV,
\end{equation}
is $O(2)$-equivariant. Clearly, $\mathbb V_0$ can be identified with $\bfV$ with the trivial $O(2)$-action, while $\mathbb V_k$, $k = 1,2,\ldots$, 
is modeled on the irreducible $O(2)$-representation $\cW_k\simeq\mathbb{R}^2$, where $SO(2)$ acts by $k$-folded rotations and $\kappa$ acts by complex conjugation. 
 
Next, each $\mathbb V_k$, $k = 0,1,2,\ldots$, is also $\Gamma \times \mathbb Z_2$-invariant. Let  $\cV_0^-, \cV_1^-,\cV_2^-,\dots, \cV_{\mathfrak r}^-$
be a complete list of all irreducible orthogonal $\Gamma \times \mathbb Z_2$-representations on which $\Gamma \times \mathbb Z_2$-isotypic  components of $\bfV \simeq \mathbb V_0$ are modeled (here ``$^-$" stands to indicate the antipodal $\mathbb Z_2$-action and $\cV_0^-$ corresponds to the trivial $\Gamma$-action). Since 
$\cV_{k,l}^- :=\cW_k\otimes \cV_l^-$ is an irreducible orthogonal $G$-representation, it follows that  $\mathbb V_0$ and   $\mathbb V_k$ (cf. \eqref{dcp}) admit the following $G$-isotypic  decompositions:
 \begin{equation}\label{iso-0}
     \mathbb V_0=V_{0}^-\oplus V^-_{1}\oplus\dots \oplus V^-_{\mathfrak r}
 \end{equation}
 (with the trivial $O(2)$-action) and 
    \begin{equation}\label{eq:iso-k}
        \mathbb V_k = V_{k,0}^-\oplus V_{k,1}^-\oplus\dots \oplus V_{k,\mathfrak r}^-,
        \end{equation}
where  $V_l^-$ (resp. $V_{k,l}^-$) is modeled on  $\cV_{0,l}^-$  (resp. $\cV_{k,l}^-$ with $k > 0$).    
\begin{remark}\label{rem:non-constant-solutions}        
Clearly, $x \in C^2(S^1;\bfV)$ is not a constant function if $G_x$ does {\it not} contain $O(2) \simeq O(2) \times \{1\} \times \{1\} < G$.
\end{remark}

  \subsection{Operators} 
   Define the following operators:
   \begin{alignat*}{3}
   \mathfrak i: & \,\scrE \rightarrow C(S^1,\bfV),\quad &(\mathfrak i x)(t) &:= x(t)\\
    L:& \, \scrE \rightarrow C(S^1,\bfV),\quad &(Lx)(t)&:=\ddot{x}(t) - (\mathfrak i x)(t)\\
    j : &\, \scrE\rightarrow C(S^1,\bfV^{m+1}),\quad & (jx)(t)&:=(x(t),x(t-\tau),\dots,x(t-(m-1)\tau),\dot x(t))
    \end{alignat*}
and the Nemytskii operator 
  $N_f :  \;C(S^1,\bfV^{m+1}) \rightarrow  C(S^1,\bfV)$ given by 
 \[
 (N_f (x,\bold y,z))(t) := f(x(t),y^1(t),\dots,y^{m-1}(t),z(t)). 
\]
 The above operators are illustrated  on the (non-commutative) diagram following below: 
   \begin{figure}[h!]\label{pict}
    \centering
  	\begin{tikzpicture}[>=angle 90]
  	
  	\matrix(a)[matrix of math nodes,
  	row sep=5.5em, column sep=2.5em,
  	text height=1.5ex, text depth=2ex]
  	{\scrE& & C(S^1,\bfV)\\
  		& C(S^1,\bfV^{m+1}) \\};
  	\path[->, ]  (a-1-1) edge node[above]{$L,\,{\mathfrak i}$}
  	                                   (a-1-3);
  	\path[->](a-1-1) edge node[below left]{$j$}
  	                      (a-2-2);
  	\path[<-](a-1-3) edge node[below right]{$N_f $} (a-2-2);
  	
  	\end{tikzpicture}
 
  	\caption{Operators involved}
   \end{figure}
  
  \noindent
  System \eqref{eq:1} is equivalent to
   \begin{align}\label{eq:4}
    Lx= N_f  (jx)-\mathfrak i(x),\quad  x\in\scrE.
   \end{align}
  Since $L$ is an isomorphism, equation \eqref{eq:4} can be reformulated as follows:
   \begin{align}\label{eq:5}
    \scrF(x):=x- L^{-1}(N_f (jx)-\mathfrak i(x))=0, \;\; x\in\scrE. 
   \end{align}
   
\begin{proposition}\label{prop}
Suppose that  $f$ satisfies conditions (R), \ref{c1}--\ref{c3}, and  the nonlinear operator $\scrF:\mathscr E\to \mathscr E$ is given by \eqref{eq:5}. Then, 
the map $\mathscr F$ is a $G$-equivariant completely continuous field.
 \end{proposition}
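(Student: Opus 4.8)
The plan is to verify the two assertions of the Proposition separately: (1) $\mathscr F$ is $G$-equivariant, and (2) $\mathscr F$ is a completely continuous field, i.e. $\mathscr F = \id - \mathscr K$ with $\mathscr K := L^{-1}\circ(N_f\circ j - \mathfrak i)$ compact.

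\textbf{Equivariance.} First I would check that each of the elementary operators $L$, $\mathfrak i$, $j$, $N_f$ intertwines the appropriate $G$-actions. The action of $G$ on $\scrE$ and on $C(S^1;\bfV)$ is by time shift/reflection $x(t)\mapsto \pm\gamma x(\pm t + \theta)$, and since $\frac{d^2}{dt^2}$ commutes with time shifts and with $t\mapsto -t$ (the sign is squared), while the $\pm$ and $\gamma$ are constant linear maps, $L$ is $G$-equivariant; $\mathfrak i$ is obviously so. For $j$, the only subtlety is the permutation of delay coordinates: the $\kappa$-action sends $x(t)\mapsto x(-t+\theta)$, which turns the delay $x(t-\tau_k)$ into $x(-t+\theta-\tau_k) = x(-(t+\tau_k)+\theta)$, i.e.\ into the $\kappa$-shifted value of $x$ at time $t+\tau_k = t-\tau_{m-k}$ modulo $2\pi$ (using $\tau_k = 2\pi k/m$); so on $C(S^1;\bfV^{m+1})$ one must use the $G$-action that reverses the order $(y^1,\dots,y^{m-1})\mapsto(y^{m-1},\dots,y^1)$ under $\kappa$, which is exactly the action making $j$ equivariant. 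Then $N_f$ is $G$-equivariant precisely because $f$ satisfies hypothesis (R) (reversibility, matching the coordinate reversal in the $\kappa$-action) together with \ref{c1} ($\Gamma$-equivariance) and \ref{c2} (oddness, matching the $\mathbb Z_2$-action); here one also uses that the $\dot x$-slot transforms correctly, noting $\frac{d}{dt}x(-t+\theta) = -\dot x(-t+\theta)$ and that \ref{c2}(i) absorbs this extra sign. Since $L$ is a $G$-equivariant isomorphism, $L^{-1}$ is $G$-equivariant, and $\mathscr F = \id - L^{-1}(N_f j - \mathfrak i)$ is a composition/difference of equivariant maps, hence $G$-equivariant.

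\textbf{Complete continuity.} Here the argument is the standard one for functional-differential operators. The inclusion $\scrE = C^2_{2\pi}(\mathbb R;\bfV)\hookrightarrow C_{2\pi}(\mathbb R;\bfV)$ is compact by Arzel\`a--Ascoli: a bounded set in $\scrE$ has uniformly bounded functions and first derivatives (equicontinuity), and bounded second derivatives give equicontinuity of the first derivatives as well. The operators $\mathfrak i$ and $j$ are bounded linear maps into $C(S^1;\bfV)$ and $C(S^1;\bfV^{m+1})$ respectively, and in fact factor through these compact inclusions (time shifts are isometries of $C$), so $\mathfrak i$ and $j$, viewed as maps out of $\scrE$, are compact. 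The Nemytskii operator $N_f$ is continuous and bounded on bounded sets because $f$ is continuous (by \ref{c1}) — I would recall the elementary fact that superposition with a continuous $f$ is a continuous, bounded-set-bounded map $C(S^1;\bfV^{m+1})\to C(S^1;\bfV)$. Finally $L^{-1}: C(S^1;\bfV)\to\scrE$ is a bounded linear isomorphism (explicitly, $L = \frac{d^2}{dt^2} - \id$ has trivial kernel on $2\pi$-periodic functions since $-1$ is not of the form $-k^2$, and is surjective with continuous inverse given by convolution with the periodic Green's function). Composing: $\mathscr K = L^{-1}(N_f\circ j - \mathfrak i)$ sends a bounded set $B\subset\scrE$ first via $j$ (or $\mathfrak i$) to a \emph{precompact} set, then via the continuous $N_f$ to a precompact set in $C(S^1;\bfV)$, then via the bounded operator $L^{-1}$ to a precompact set in $\scrE$; and $\mathscr K$ is continuous as a composition of continuous maps. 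Hence $\mathscr K$ is compact and $\mathscr F = \id - \mathscr K$ is a completely continuous field.

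\textbf{Main obstacle.} The only genuinely delicate point is the bookkeeping in the equivariance verification: one must pin down exactly which $G$-action is placed on $C(S^1;\bfV^{m+1})$ so that $j$ is equivariant, and then confirm that, under that very action, hypothesis (R) together with \ref{c2}(i)--(ii) is exactly what makes $N_f$ equivariant — in particular tracking the interplay between the order-reversal of the delay coordinates (governed by (R) and $\tau_k + \tau_{m-k}\equiv 0 \bmod 2\pi$), the sign change in $\dot x$ under time reversal, and the $\pm 1\in\mathbb Z_2$ factor. Everything else (compactness, continuity, invertibility of $L$) is routine and I would dispatch it briskly.
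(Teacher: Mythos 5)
Your proposal is correct and follows essentially the same route as the paper: complete continuity via compactness of the inclusion $\scrE\hookrightarrow C(S^1;\bfV^{m+1})$ (Arzel\`a--Ascoli) composed with the continuous $N_f$ and the bounded isomorphism $L^{-1}$; equivariance by checking each elementary operator, with the $\kappa$-commutation of $N_f\circ j$ resting on exactly the same three ingredients the paper uses, namely $\tau_k+\tau_{m-k}\equiv 0\ (\mathrm{mod}\ 2\pi)$, hypothesis (R) to undo the coordinate reversal, and \ref{c2}(i) to absorb the sign from $\tfrac{d}{dt}x(-t+\theta)=-\dot x(-t+\theta)$. The only cosmetic divergence is that you check $L$ is $G$-equivariant directly from commuting $\tfrac{d^2}{dt^2}$ with shifts and reflections, whereas the paper reads it off the Fourier-mode formula $L|_{\mathbb V_k}=-(k^2+1)\id$; you are also slightly more explicit than the paper about the $G$-action on $C(S^1;\bfV^{m+1})$ under which $j$ is equivariant, which is a welcome clarification.
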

\begin{proof} Combining \eqref{dcp} and  \eqref{eq:complexification} with the definition of $L$ yields:
\begin{equation}\label{eq:action-L}
L|_{\mathbb V_k} =-(k^2+1) \id:V^c\to V^c \quad \rm{and} \quad L|_{\mathbb V_0}=-\id \quad (k > 0). 
\end{equation}
In particular, $L$ (and, therefore, $L^{-1}$) is $G$-equivariant. Since $j$ is the embedding, it is $G$-equivariant as well. Since 
$L$ and $N_f $ are continuous and $j$ is a compact operator, it follows that  $\scrF$ is a completely continuous field. 
Also, by assumption \ref{c1} (resp.  \ref{c2}), 
$\scrF$ is $\Gamma$-equivariant (resp. $\bz_2$-equivariant). 
Since system   \eqref{eq:1} is autonomous, it follows that $\scrF$ is $SO(2)$-equivariant. To complete the proof of part (i), one only needs to show that  $\scrF$ commutes with the $\kappa$-action. 
In fact,  for all $t\in \br$ and $x\in \mathscr E$, one has
(we skip $\mathfrak i$ to simplify notations):  
{\footnotesize 
		\begin{align*}
		\mathscr F(\kappa x)(t)
		&= \kappa x(t)-L^{-1}\Big(f(\kappa x(t), \kappa \bold x_t, \kappa \dot x(t))-\kappa x(t)\Big)\\
		&=x(-t) -L^{-1}\Big( f(x(-t),x(-t + \tau_1), \dots, x(-t + \tau_{m-1})), -\dot x(-t)) - x(-t)\Big) \; \text{(by \eqref{action2})}\\
		&=x(-t) -L^{-1}\Big( f(x(-t),x(-(t  + 2\pi - \tau_1)),\dots, x(-(t + 2\pi - \tau_{m-1})),- \dot x(-t))-x(-t)\Big) \; \text{(by periodicity of $x$)}\\
		&=x(-t) -L^{-1}\Big( f(x(-t),x(-(t  + 2\pi - \tau_1)),\dots, x(-(t + 2\pi - \tau_{m-1})),  \dot x(-t)) - x(-t)\Big) \; \text{(by (A2)(i))} \\
		&=x(-t) -L^{-1}\Big( f(x(-t),x(-t-\tau_{m-1}),\dots, x(-t-\tau_1),\dot x(-t))-x(-t)\Big) \; \text{(by choice of $\tau_k$)} \\
		&=x(-t) -L^{-1}\Big( f(x(-t),x(-t-\tau_1),\dots, x(-t-\tau_{m-1}),\dot x(-t))-x(-t)\Big)  \; \text{(by (R))} \\
		&=\kappa x(t)-\kappa L^{-1}\Big( f(x(t),x(t-\tau_1), \dots, x(t-\tau_m),\dot x(t))-x(t)\Big)\; \text{(by \eqref{action2})}\\
		&=\kappa\Big(x(t)-L^{-1}\big(f(x(t),\bold x_t,\dot x(t)) -x(t)\Big)\\
		&=\kappa \mathscr F(x)(t).
		\end{align*}
		}\end{proof}

\vs
\begin{remark}\label{rem:propert-scr-A}
Notice that if $f$ satisfies \ref{c2}(ii), then $x(t)\equiv 0$ is a solution to equation \eqref{eq:5}. Also,  
the operator
 \begin{equation}\label{eq:linearization}
     \scrA:=D\scrF(0) : {\scrE} \longrightarrow {\scrE}
 \end{equation}
 is correctly defined provided that condition \ref{c3} is satisfied. Moreover, in this case, 
   \begin{equation}\label{eq:linearization-formula}
    \scrA=\id-L^{-1}\big(DN_f (0) \circ j - \mathfrak i\big) : \scrE\longrightarrow\scrE 
   \end{equation}
 and $\scrA$ is a Fredholm operator of index zero; in particular,
 $\scrA$ is an isomorphism if and only if  $0\not\in \sigma(\scrA)$. 
 Furthermore, if $f$ satisfies  \ref{c1}--\ref{c3}, then the $G$-equivariance of $\scrF$ together with $G(0) = 0$ imply the  $G$-equivariance of $\scrA$. 
 \end{remark}

 \vs
 We will also need the following lemma (its proof is standard and can be found in \cite{BHKX}).
   
 \begin{lemma}\label{lm:3}
 Under the assumptions (R), \ref{c1}-\ref{c3}, suppose, in addition, that $0 \not\in \sigma(\scrA)$ (here $\sigma(\scrA)$ stands for the spectrum of $\scrA$)
 (cf. \eqref{eq:5} and \eqref{eq:linearization}-\eqref{eq:linearization-formula}).
Then, for a sufficiently small $\varepsilon>0$, 
the map $\scrF$ is $B_{\varepsilon}(0)$-admissibly  $G$-equivariantly homotopic to $\scrA$. 
\end{lemma}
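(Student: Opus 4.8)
The plan is to deform $\scrF$ to its linearization $\scrA$ along the straight-line homotopy $(1-t)\scrA+t\scrF$ and to use the hypothesis $0\notin\sigma(\scrA)$ to keep this homotopy zero-free on a sufficiently small sphere centered at the origin. First I would collect what is already available. By Remark \ref{rem:propert-scr-A}, condition \ref{c3} guarantees that $\scrF$ is Fr\'echet differentiable at the origin with $D\scrF(0)=\scrA$, so writing $\scrF(x)=\scrA(x)+\omega(x)$ we have $\|\omega(x)\|_{\infty,2}=o(\|x\|_{\infty,2})$ as $x\to 0$; by \ref{c2}(ii) the origin is a zero of both $\scrF$ and $\scrA$; and by Proposition \ref{prop} together with Remark \ref{rem:propert-scr-A} both $\scrF$ and $\scrA$ are $G$-equivariant completely continuous fields, hence so is $\omega=\scrF-\scrA$. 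I would then define $H:[0,1]\times\scrE\to\scrE$ by $H(t,x):=(1-t)\scrA(x)+t\scrF(x)=\scrA(x)+t\,\omega(x)$. Each $H(t,\cdot)$ has the form $\id$ minus a compact map (namely $L^{-1}$ applied to a compact operator factoring through $j$), and the compact part depends jointly continuously on $(t,x)$, so $H$ is a homotopy of completely continuous fields; being a convex combination of $G$-equivariant maps, it is $G$-equivariant.

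Next I would verify admissibility. Since $0\notin\sigma(\scrA)$ and $\scrA$ is Fredholm of index zero (Remark \ref{rem:propert-scr-A}), $\scrA$ is a linear isomorphism of $\scrE$ with bounded inverse; set $c:=\|\scrA^{-1}\|^{-1}>0$, so that $\|\scrA(x)\|_{\infty,2}\ge c\,\|x\|_{\infty,2}$ for all $x\in\scrE$. Using the $o$-estimate for $\omega$, choose $\varepsilon>0$ so small that $\|\omega(x)\|_{\infty,2}\le\tfrac{c}{2}\|x\|_{\infty,2}$ whenever $\|x\|_{\infty,2}\le\varepsilon$. Then for every $t\in[0,1]$ and every $x$ with $\|x\|_{\infty,2}=\varepsilon$,
\[
\|H(t,x)\|_{\infty,2}\ge\|\scrA(x)\|_{\infty,2}-t\,\|\omega(x)\|_{\infty,2}\ge c\varepsilon-\tfrac{c}{2}\varepsilon=\tfrac{c}{2}\varepsilon>0,
\]
so $H$ has no zeros on $[0,1]\times\partial B_\varepsilon(0)$. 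Thus $H$ is a $B_\varepsilon(0)$-admissible $G$-equivariant homotopy joining $H(0,\cdot)=\scrA$ to $H(1,\cdot)=\scrF$, which is exactly the assertion of Lemma \ref{lm:3}.

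The only step carrying real content is the differentiability estimate $\|\omega(x)\|_{\infty,2}=o(\|x\|_{\infty,2})$ — everything else is the routine zero-free-homotopy bookkeeping above. This is where \ref{c3} truly enters: one passes from pointwise differentiability of $f$ at $0$ (with $A=Df(0)$) to Fr\'echet differentiability of the Nemytskii operator $N_f$ on $C(S^1;\bfV^{m+1})$ by a standard uniform-continuity argument over the compact circle $S^1$, and then composes with the bounded linear operators $j$, $\mathfrak{i}$ and $L^{-1}$ to obtain $D\scrF(0)=\scrA$. Since this argument is entirely standard, I would simply cite \cite{BHKX} for the details, as indicated before the statement.
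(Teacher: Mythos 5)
Your proposal is correct and is exactly the standard argument that the paper has in mind when it defers to \cite{BHKX}: the straight-line homotopy $\scrA + t(\scrF-\scrA)$, the lower bound $\|\scrA x\|\ge c\|x\|$ with $c=\|\scrA^{-1}\|^{-1}$ (valid because $\scrA$ is Fredholm of index zero and $0\notin\sigma(\scrA)$), and the $o(\|x\|)$-estimate for the nonlinear remainder coming from $D\scrF(0)=\scrA$. The only small imprecision is your attribution of the Nemytskii-operator differentiability to a ``uniform-continuity argument over $S^1$'': since the linearization is at the \emph{zero} function, the pointwise differentiability of $f$ at $0$ granted by \ref{c3} already gives the needed uniform remainder bound $|f(u)-f(0)-Au|\le\epsilon|u|$ for all $|u|<\delta$, which passes to the $\sup$-norm directly — no compactness of $S^1$ or uniform continuity of $Df$ is invoked (those would be needed only when linearizing at a non-constant function).
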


 \vs
\subsection{Abstract equivariant degree based result} 
	
Assuming that conditions ($\eta_1$)--($\eta_6$), (R) and \ref{c1}--\ref{c6} (resp. ($\eta_1$)--($\eta_6$), (R), \ref{c1}--\ref{c5} and {\rm ($A_6'$)}) are satisfied, we are going  to formulate an equivariant degree based result related to problem \eqref{eq:x-t-form}. To this end, one needs: (i) to construct an open bounded $G$-invariant 
domain $\Omega \subset  \mathscr E$, $0 \in \Omega$,  such that $\mathscr F(x)$ is  $\Omega$-admissible,  and (ii) to introduce 
additional concepts related to {\it maximality} of orbit types.

\vs 
Take $\phi$  from assumption \ref{c5} and $K$ from assumption \ref{c6} (resp.{\rm ($A_6'$)}). With an eye towards deforming $\mathscr F$ by an $\Omega$-admissible $G$-homotopy and to be on the safe side, take  $M := M(2 \phi,\eta, K+1,p,R)$  (resp.  $M := M(2 \phi,\alpha, K+1,p,R)$ 
provided by Lemma \ref{lm:2} (resp. Remark \ref{rem:eq:M-from-Xiaoli})
Next, take $N > 0$ provided by Lemma \ref{lem:second-der-est} and  put
 \begin{equation}\label{eq:Omega-set}
 \Omega:=\left\{x\in \mathscr E: \forall_{t\in \br} \;\; x(t)\in D, \; \|\dot x\|_\infty<M+1, \; \|\ddot x\|_\infty<N+1\right\}
 \end{equation}
 (see ($\eta_6$) for the definition of $D$). 
 It is easy to see that $\Omega$ is an open bounded and  $G$-invariant set. Moreover, 
 \vs
 \begin{lemma}\label{lm:4}  Under the assumptions ($\eta_1$)--($\eta_6$), (R) and \ref{c1}--\ref{c6}  (resp. ($\eta_1$)--($\eta_6$), (R), \ref{c1}--\ref{c5} and {\rm ($A_6'$)}), the map $\mathscr F$ (given by \eqref{eq:5}) is $\Omega$-admissible.
 \end{lemma}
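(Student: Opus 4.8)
The plan is to prove $\Omega$-admissibility directly, that is, to show that $\mathscr F$ has no zeros on $\partial\Omega$. By \eqref{eq:action-L} and \eqref{eq:5}, a point $x\in\mathscr E=C^2_{2\pi}(\bbR,\bfV)$ is a zero of $\mathscr F$ precisely when $x$ is a $C^2$-smooth $2\pi$-periodic solution of $\ddot x(t)=f(x(t),\mathbf x_t,\dot x(t))$; if in addition $x\in\overline\Omega$, then $x(t)\in\overline D$ for all $t$, so $x$ actually solves \eqref{eq:x-t-form}. Since $\Omega$ in \eqref{eq:Omega-set} is cut out by the three \emph{strict} inequalities ``$x(t)\in D$ for all $t$'', ``$\|\dot x\|_\infty<M+1$'' and ``$\|\ddot x\|_\infty<N+1$'', and $\partial\Omega=\overline\Omega\setminus\Omega$, it suffices to take an arbitrary solution $x\in\overline\Omega$ of \eqref{eq:x-t-form} and check that all three inequalities hold strictly for $x$; then $x\in\Omega$, hence $x\notin\partial\Omega$. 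The argument is the same in both hypothesis packages ((R), \ref{c1}--\ref{c6} and (R), \ref{c1}--\ref{c5}, {\rm ($A_6'$)}).

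First I would rule out $C$-touching. From $x\in\overline\Omega$ one gets $x(t)\in\overline D\subset B_R(0)$, so $|x(t)|\le R$ for every $t$; since \ref{c4} belongs to the hypotheses in both cases, Lemma \ref{lm:1} applies and $x(t)$ cannot reach $C=\partial D$, i.e. $x(t)\in D$ for all $t$. Next I would invoke the a priori bounds: the hypotheses of Lemma \ref{lm:2} hold with the \emph{original} $\phi$ from \ref{c5} and the \emph{original} $K$ from \ref{c6} (resp. $\alpha,K$ from {\rm ($A_6'$)}), so $|\dot x(t)|\le M_0$ for all $t$, where $M_0:=M(\phi,\eta,K,p,R)$ as in \eqref{estim-M} (resp. $M_0:=M(\phi,\alpha,K,p,R)$ as in \eqref{eq:M-from-Xiaoli}). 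An elementary monotonicity comparison --- replacing $\phi$ by $2\phi$ simply halves the auxiliary function $\Phi$ from the proof of Lemma \ref{lm:2}, while $K+1>K$ and $\Phi$ (hence $\Phi^{-1}$) is increasing --- shows $M_0\le M$, where $M=M(2\phi,\eta,K+1,p,R)$ (resp. $M(2\phi,\alpha,K+1,p,R)$) is the constant actually used in \eqref{eq:Omega-set}; therefore $\|\dot x\|_\infty\le M<M+1$. Finally, $x(t)$ and each component of $\mathbf x_t$ lie in $\overline D$ and $|\dot x(t)|\le M$, so the definition \eqref{eq:N} of $N$ gives $|\ddot x(t)|=|f(x(t),\mathbf x_t,\dot x(t))|\le N<N+1$ --- this is exactly the estimate of Lemma \ref{lem:second-der-est}. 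Thus all three inequalities are strict, so $x\in\Omega$, and the argument is complete.

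I do not expect a genuine obstacle here: the analytically substantive ingredients --- the first- and second-derivative a priori bounds and the impossibility of a solution touching $C$ --- are already available from Lemmas \ref{lm:2}, \ref{lem:second-der-est} and \ref{lm:1}, and the present lemma merely assembles them. The only point needing a little care is the comparison $M_0\le M$ (together with the analogous bookkeeping for $N$ via \eqref{eq:N}): it is precisely the deliberate enlargement of the constants to $2\phi$ and $K+1$ in \eqref{eq:Omega-set} that turns the sharp a priori estimates into the \emph{strict} inequalities defining the open set $\Omega$ --- and this same ``safety margin'' is what will later permit $\mathscr F$ to be $\Omega$-admissibly $G$-homotoped to the linear/model fields used in the degree computations.
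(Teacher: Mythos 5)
Your proof is correct and follows essentially the same approach as the paper's, combining Lemmas \ref{lm:1}, \ref{lm:2} and \ref{lem:second-der-est} to verify the three strict inequalities defining $\Omega$. The paper phrases this as a contradiction argument (starting from a hypothetical zero $x\in\partial\Omega$ and using an approximating sequence $x_n\to x$ in $\Omega$ to get $x(t)\in\overline D$), while you argue directly; you also usefully make explicit the monotonicity comparison $M(\phi,\eta,K,p,R)\le M(2\phi,\eta,K+1,p,R)$, which the paper leaves implicit when it invokes Lemma \ref{lm:2}.
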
 
 \begin{proof} 
Suppose for contradiction, that there exists $x\in \partial \Omega$ such that $\mathscr F(x)=0$. Then, there exists a sequence $\{x_n\} \subset \Omega$
such that $\|x_n - x\|_{\infty,2} \to 0$ and $x \not\in \Omega$. In particular (see \eqref{eq:Omega-set}),
\begin{equation}\label{eq-translation}
\forall_{n\in \mathbb N} \; \forall_{t\in \mathbb R} \quad x_n(t) \in D \subset \overline{D}. 
\end{equation}
Combining \eqref{eq-translation} with the uniform convergence yields 
\begin{equation}\label{eq:translate2}
\forall_{t \in \mathbb R}  \quad x(t) \in \overline{D}. 
\end{equation}
Since $\mathfrak F (x) = 0$, relation  \eqref{eq:translate2} together with 
Lemmas \ref{lm:2} and \ref{lem:second-der-est} imply:
\begin{equation}\label{eq:rememb-L.2.1}
\|\dot x\|_\infty \leq M < M+1  \qquad \text{and} \qquad \|\ddot{x}(t) \|_\infty \le N < N +1.
\end{equation}
Since $x \not\in \Omega$, inequalities \eqref{eq:rememb-L.2.1} together with \eqref{eq:Omega-set} imply that there exists $t_o \in \mathbb R$
such that $x(t_o) \not\in D$, hence (see again \eqref{eq:translate2}), $x(t_o) \in C:= \partial D$, but this contradicts Lemma \ref{lm:1}.
 \end{proof}

Observe that under the assumptions of Lemma \ref{lm:4}, the $G$-equivariant degree $\gdeg(\mathscr F,\Omega)$ is well-defined. Also, under the assumptions of Lemma \ref{lm:3},  $\gdeg(\scrA,B_{\varepsilon}(0))$ is well-defined. Put
\begin{equation}\label{eq5}
\omega := \gdeg(\mathscr F, \Omega) - \gdeg(\scrA,B_{\varepsilon}(0)).
\end{equation}
Using $\omega$, we are going to present a result characterizing spatio-temporal symmetries of solutions to problem \eqref{eq:x-t-form}. Being of topological nature, this result allows us to completely characterize the spacial component of the symmetry in question while the temporal one can be characterized up to a folding only (in particular,  the result does {\it not} provide an information on the {\it minimal period}). To be more formal, we need
the following

\begin{definition}\label{def:extended-maximal}
  	
	(a) An orbit type $(H)$ in the space $\scrE$ is said to be of \textit{maximal kind} if there exists $k\ge 1$ and $u\neq 0, u\in \bbV_k$, such that $H=G_u$ and $(H)$ is a {\it maximal} orbit type in $\Phi (G,\bbV_k\setminus\{0\})$.
	
	\medskip
	
	(b) Take $x \in \scrE$ and assume that there exists $p \in \mathbb N$ such that $(\phi_p({G}_x)) = (H)$, where $(H)$ is of maximal kind and 
the homomorphism 	$\phi_p : O(2)\times \Gamma\times\bbZ_2\rightarrow O(2)\times \Gamma\times\bbZ_2$ is given by   
\begin{equation*}
\phi_p(g, h,\pm1)=(\mu_p(g), h,\pm1), \quad g\in O(2), \;\; h\in \Gamma
\end{equation*}
(here $\mu_p : O(2)\rightarrow O(2)/ \bbZ_p \simeq O(2)$ is the natural $p$-folding homomorphism of $O(2)$ into itself). Then, $x$ is said to have  an \textit{extended orbit type} $ (H)$.
 \end{definition}

\medskip

We are now in a position to formulate the following abstract result.
 \begin{proposition}\label{th:abstract}
 Assume that $\eta : \bfV \to \mathbb R$ satisfies ($\eta_1$)--($\eta_6$)
and let 
 $f : \bfV \times \bfV^{m-1}\times \bfV \to \bfV$ satisfy conditions (R) and  \ref{c1}\---\ref{c6} (resp.  \ref{c1}\---\ref{c5} and {\rm ($A_6'$)}).  Assume, in addition, that  $0 \not\in \sigma(\scrA)$ (cf.
\eqref{eq:5}, \eqref{eq:linearization}, \eqref{eq:linearization-formula}). Assume, finally,
\begin{equation}\label{eq:degree-actual}
       \omega=n_1(H_1)+n_2(H_2)+\dots +n_s(H_s),\quad n_j\neq 0,(H_j)\in\Phi_0(G)
\end{equation}
(cf. \eqref{eq5}). Then:
\begin{itemize}
\item[(a)] for every $j=1,2,\dots,m$, there exists a ${G}$-orbit of $2\pi$-periodic solutions $x\in \Omega$ to  \eqref{eq:x-t-form} such that 
  $(G_x) \geq (H_j)$; 
\item[(b)] if $H_j$ is finite, then the solution $x$ is non-constant; 
\item[(c)] if $(H_j)$ is of maximal kind, then the solution $x$ has the extended orbit type $(H_j)$ (cf. Definition \ref{def:extended-maximal}).
\end{itemize}
\end{proposition}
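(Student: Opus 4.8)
The plan is to derive parts (a)--(c) from the standard existence and homotopy properties of the equivariant degree, together with the structural information already assembled in the excerpt. First I would invoke Lemma \ref{lm:4} to guarantee that $\gdeg(\mathscr F,\Omega)$ is well-defined, and Lemma \ref{lm:3} to guarantee that $\gdeg(\scrA,B_\varepsilon(0))$ is well-defined, so that $\omega$ in \eqref{eq5} makes sense. The key observation is that the set $\Omega \setminus \overline{B_\varepsilon(0)}$ is $G$-invariant, and by the additivity (excision) property of the equivariant degree together with Lemma \ref{lm:3},
\[
\gdeg(\mathscr F, \Omega) = \gdeg(\mathscr F, B_\varepsilon(0)) + \gdeg(\mathscr F, \Omega \setminus \overline{B_\varepsilon(0)}) = \gdeg(\scrA, B_\varepsilon(0)) + \gdeg(\mathscr F, \Omega \setminus \overline{B_\varepsilon(0)}),
\]
so that $\omega = \gdeg(\mathscr F, \Omega \setminus \overline{B_\varepsilon(0)})$. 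Hence, whenever a coefficient $n_j$ in \eqref{eq:degree-actual} is nonzero, the existence property of the equivariant degree yields a solution $x \in \Omega \setminus \overline{B_\varepsilon(0)}$ of $\mathscr F(x) = 0$ with $(G_x) \geq (H_j)$; by the equivalence of \eqref{eq:5} and problem \eqref{eq:x-t-form} (and since $x \in \Omega$ forces $x(t) \in D$ for all $t$), this $x$ is a $2\pi$-periodic solution living in $D$, proving (a).

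For part (b), I would argue by contradiction: if $x$ were a constant function, then $x(t) \equiv x_0$ with $x_0 \in D$, and the whole circle group $SO(2) \times \{1\} \times \{1\}$ — in fact all of $O(2) \times \{1\} \times \{1\}$, using the $\kappa$-action formula \eqref{action2} on a constant — would fix $x$, so $G_x \supseteq O(2) \times \{1\} \times \{1\}$ (cf. Remark \ref{rem:non-constant-solutions}). Since $O(2)$ is infinite, $G_x$ would be infinite, contradicting $(G_x) \geq (H_j)$ with $H_j$ finite (an overgroup of a finite-Weyl-group subgroup containing an infinite subgroup cannot be finite here — more carefully, $(G_x)\ge (H_j)$ means $G_x$ is conjugate to a subgroup containing a conjugate of $H_j$, and if $H_j$ is finite while $G_x$ is infinite, this is still possible in general, so the correct argument is: the solution $x$ obtained sits in $\Omega\setminus\overline{B_\varepsilon(0)}$ and its orbit type realizes a generator appearing in $\gdeg$ of a map whose nontrivial part comes from modes $\mathbb V_k$ with $k\ge 1$, forcing the temporal part of $G_x$ to be a proper — hence finite — subgroup of $O(2)$). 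I would make this precise by noting that the contribution to $\omega$ supported on $(H_j)$ with $H_j$ finite must come, after the isotypic decomposition \eqref{iso-0}--\eqref{eq:iso-k} and the degree computations of the later sections, from a mode $\mathbb V_k$ with $k \geq 1$, on which no nonzero vector is fixed by all of $O(2)$.

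For part (c), assuming $(H_j)$ is of maximal kind, I would use Definition \ref{def:extended-maximal}(a): there is $k \geq 1$ and $0 \neq u \in \mathbb V_k$ with $H_j = G_u$ a maximal orbit type in $\Phi(G, \mathbb V_k \setminus \{0\})$. The maximality, combined with the $p$-folding homomorphism $\mu_p$ and the fact that a solution realizing $(H_j)$ cannot have strictly larger symmetry, forces $(\phi_p(G_x)) = (H_j)$ for the appropriate $p$, i.e. $x$ has extended orbit type $(H_j)$ in the sense of Definition \ref{def:extended-maximal}(b); this is the standard ``maximal orbit type $\Rightarrow$ exact symmetry up to folding'' argument from equivariant degree theory (see \cite{AED, IV-book}). \textbf{The main obstacle} I anticipate is part (c): carefully matching the abstract maximality condition with the folding homomorphism $\phi_p$ and ruling out, via the structure of $\Phi(G,\mathbb V_k\setminus\{0\})$, that the detected solution could have an orbit type strictly above $(H_j)$ after folding; this requires knowing that the generator $(H_j)$ in $\omega$ genuinely comes from the $k$-th Fourier mode and invoking the $O(2)$-folding properties established in the appendix rather than anything in the present section.
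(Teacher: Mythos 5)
Your handling of part (a) is correct and matches the paper: the paper also shrinks $\varepsilon$ so that $B_\varepsilon(0)\subset\Omega$, sets $\Omega':=\Omega\setminus \overline{B_\varepsilon(0)}$, and applies additivity to get $\gdeg(\mathscr F,\Omega')=\gdeg(\mathscr F,\Omega)-\gdeg(\scrA,B_\varepsilon(0))=\omega$, then uses the Existence property. You are slightly more careful than the paper in spelling out that the replacement of $\gdeg(\mathscr F,B_\varepsilon(0))$ by $\gdeg(\scrA,B_\varepsilon(0))$ requires the homotopy of Lemma \ref{lm:3} as well as additivity; the paper compresses these two steps into one line.

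Your discussion of part (b) is where you diverge from the paper, and you should be aware of what the paper actually does: it simply says ``(b) Follows from Remark \ref{rem:non-constant-solutions}.'' The concern you raise mid-paragraph is legitimate — Remark \ref{rem:non-constant-solutions} gives ``$x$ constant $\Rightarrow G_x\supseteq O(2)$,'' while the Existence axiom only yields $(G_x)\ge(H_j)$, not $(G_x)=(H_j)$, so finiteness of $H_j$ does not by itself force $G_x$ to be finite and excludes constants. This is a gap in the one-line justification, and you were right to flag it. However, your attempted repair (``the contribution supported on $(H_j)$ must come from a mode $\mathbb V_k$ with $k\ge 1$, forcing the temporal part of $G_x$ to be proper'') is not a valid deduction as stated: the coefficients $n_H$ of $\omega$ do not directly constrain the isotropy of the particular solution produced by the Existence axiom, and a nonzero coefficient at a finite $(H_j)$ is formally compatible with solutions whose isotropy is strictly larger (e.g. contains $O(2)$). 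To close this, one would have to argue via the Recurrence Formula, showing that a nonzero $n_{H_j}$ for finite $H_j$ cannot be accounted for solely by solutions whose orbit type strictly dominates $(H_j)$ and contains $O(2)$, or use the stronger form of the Existence property available when $(H_j)$ is maximal among the orbit types with nonzero coefficients. Your proposal does not carry this out, and neither does the paper.

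For part (c), both you and the paper rely on the folding homomorphism $\phi_p$ and Definition \ref{def:extended-maximal}, and your sketch is in the right spirit: a maximal-kind orbit type originates from a single Fourier mode $\mathbb V_k$, $k\ge 1$, and the standard ``maximal orbit type gives exact symmetry'' reasoning, combined with $p$-folding on $O(2)$, is what identifies the extended orbit type. The paper is equally terse here (``Follows from Definition \ref{def:extended-maximal}''); your explicit mention of the folding map and of the need to rule out strictly larger orbit types after folding is a useful elaboration, though to be fully rigorous it should invoke the maximal-orbit-type version of the Existence property rather than the bare $(G_x)\ge(H_j)$ form used in (a).
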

\begin{proof}
(a) Without loss of generality, one can chose $\varepsilon$ so small that $B_{\varepsilon}(0) \subset \Omega$ (cf. conditions ($\eta_4$) and ($\eta_6$)).
Put $\Omega':=\Omega\setminus \Omega_\ve$. Then, 
by the additivity property of the equivariant degree, one has: 
\begin{equation}\label{eq:differe}
\gdeg(\mathscr F,\Omega')=\gdeg(\mathscr F,\Omega)-\gdeg(\scrA,B_{\varepsilon}(0)).
\end{equation}
Next, 
combining \eqref{eq5}, \eqref{eq:degree-actual} and \eqref{eq:differe} with the existence property of the equivariant degree, implies part (a).

\medskip

(b) Follows from Remark \ref{rem:non-constant-solutions}.

\medskip

(c) Follows from Definition \ref{def:extended-maximal}.   
\end{proof}

\section{Computation of $\gdeg(\scrA, B_{\varepsilon}(0))$} \label{sec:degree-computation}

Proposition \ref{th:abstract} reduces the study of problem \eqref{eq:x-t-form} to computing $\gdeg(\scrA,B_{\varepsilon}(0))$ and $\gdeg(\mathscr F,\Omega)$. In this section,
we will develop a ``workable" formula for $\gdeg(\scrA,B_{\varepsilon}(0))$.

\subsection{Spectrum of $\scrA$}
To begin with, we collect the equivariant spectral data related to $\scrA$. Since $\scrA$ is $G$-equivariant, it respects isotypic  decomposition  
\eqref{dcp}. Put $\gamma:=e^{\frac{i2\pi}{m}}$ and $\scrA_k:=\scrA|_{\mathbb V_k}$. Keeping in mind the commensurateness of delays in problem 
 \eqref{eq:x-t-form} and taking into account \eqref{eq:action-L}, one easily obtains:
\begin{equation}\label{eq:Ak}
\scrA_k =\id+\frac{1}{k^2+1}\left(\sum _{j=0}^{m-1}\gamma^{jk}A_j-\id\right), \quad k=0,1,2\dots,
\end{equation}
where $A_j$ stands for the derivative of $f$ with respect to $j$-th variable (see condition \ref{c3})
 By assumption (R),  
$A_j = A_{m - j}$ for $j = 1,...,m-1$, hence \eqref{eq:Ak} can be simplified as follows: 
\begin{equation}\label{eq:Ak1}
\scrA_k =\id+\frac{1}{k^2+1}\left(A_0+\sum _{j=1}^{r} 2\cos\frac {2\pi jk}{m}A_j-\ve_m A_r-\id\right), \quad k=0,1,2\dots,\; r=\left\lfloor\frac {m-1}{2} \right\rfloor ,
\end{equation}
where 
\begin{equation}\label{eq:epsilon-m}
\ve_m = \begin{cases} 
1\quad  \text{if $m$ is even};\\
 0 \quad  \text{otherwise}.
\end{cases}
\end{equation}

  Since   the matrices $A_j$ are $\Gamma$-equivariant, one has $\scrA_k(V_{k,l}^-)\subset V_{k,l}^-$ $(k = 0,1,2,\ldots$ and   $l=0,1,2,\dots, \mathfrak r)$.
  In particular, 
   $A_j(V_l^-)\subset V_l^-$, so put
   \[
   A_{j,l}:=A_j|_{V_l^-}, \quad l=0,1,2,\dots, \mathfrak r.
   \] 
To simplify the computations, we will assume 
that instead of \ref{c3} the following condition is satisfied:
 \begin{itemize}
\item [($A_3'$)] $A_{j,l}=\mu_j^l \id$ for $l=0,1,2,\dots, \mathfrak r$ and $j=0,1,\dots,m-1$.
   \end{itemize}
Clearly, under the condition ($A_3'$),  the matrices $A_j$  commute with each other, therefore, condition \ref{c3} follows. 
In particular, their corresponding  eigenspaces coincide: $E(\mu_j^l)=E(\mu_{j'}^l)$. This way, one 
 obtains the following description of the spectrum of $\scrA$:
 \begin{equation}\label{eq:spectru} 
 \sigma(\mathscr A)=\bigcup_{k=0}^\infty \sigma(\mathscr A_k),
 \end{equation}
 where 
   \begin{equation}\label{spcta}
    \sigma (\scrA_k)=\left\{1+\frac{1}{1+k^2}\left(\mu^l_0+\sum _{j=1}^{r}2\cos\frac {2\pi jk}m\mu^l_j-\ve_m\mu_r^l-1\right): l=0,1,\dots,\mathfrak r, \; 
     r=\left\lfloor\frac {m-1}{2} \right\rfloor\right\}.
   \end{equation}
   
\bigskip

\subsection{Reduction to basic $G$-degrees}
For any $ l=0,1,\dots,\mathfrak r$ and $ k = 0,1,...$, put  (cf. \eqref{spcta})
\begin{equation}\label{eq:eigen-kl}
\xi_{k,l}:=1+\frac{1}{1+k^2}\left(\mu_0^l +\sum _{j=1}^{r} 2\cos \frac{2\pi jk}{m}\mu^l_j-\ve_m \mu_r^l-1 \right), \; 
r=\left\lfloor\frac {m-1}{2} \right\rfloor , \; 
\end{equation}
As is well-known (cf. \eqref{eq:prop-bas-decomp}-\eqref{eq:prod-prop}), $\xi_{k,l}$ contributes $\gdeg(\scrA,B(\scrE))$ only if $\xi_{k,l} < 0$. 
Clearly (cf. \eqref{eq:eigen-kl}), $\xi_{k,l}$
is negative (i.e. $\xi_{k,l}\in \sigma_-(\scrA)$) if and only if
\begin{align}\label{ieqk}
k^2 <-\mu_0^l-\sum _{j=1}^r  2\cos \frac{2\pi jk}{m}\mu_j^l+\ve_m\mu_r^l, \quad  l=0,1,\dots,\mathfrak r, \; r=\left\lfloor\frac {m-1}{2} \right\rfloor , \; k = 0,1,...
\end{align}
By condition ($A_3'$), the $\cV_l^-$-isotypic  multiplicity of $\mu_j^l$ is independent of $j$ and is equal to 
\begin{equation}\label{eq:m-l}
m^l := \text{dim\,} E(\mu_j^l)/\text{dim\,} \cV_l^-=\text{dim\,}V_{l}^- /\text{dim\,} \cV_l^-.
\end{equation}
Put (cf. \eqref{ieqk}-\eqref{eq:m-l})
\begin{align}\label{mtpl}   
m_{k,l}:=\begin{cases} 
m^l &\; \text {if } \;   k^2 <-\mu_0^l-\sum _{j=1}^{r}  2\cos \frac{2\pi jk}{m}\mu_j^l+\ve_m\mu_r^l  \\
0 & \text{ otherwise}.
\end{cases}
\end{align}
Then,   
\begin{align}\label{eq6}
\gdeg(\scrA,B({\scrE}))&=\prod_{k=0}^\infty 
\prod_{l=0}^{\mathfrak r}\big(\mbox{deg}_{\cV_{k,l}^-}\big)^{m_{k,l}}
\end{align}

\begin{remark}\label{rem:odd-powers-only}  
(a) Notice that in the product \eqref{eq6},  one has $m_{k,l}\not=0$ for finitely many values of $k$ and $l$ (cf. \eqref{mtpl}). Hence,  
for almost all the factors in \eqref{eq6}, one has $(\mbox{deg}_{\cV_{k,l}})^0=({G})$, which is the unit element in $A({G})$. Thus,  formula   \eqref{eq6}  is well-defined. 

\medskip 
(b) Using the relation $(\mbox{deg}_{\cV_{k,l}^-})^2=(G)$, one can further simplify formula \eqref{eq6}.  Clearly, only the exponents $m_{k,l}\not=0$ which are odd will contribute to the value of \eqref{eq6}. 
\end{remark}

\subsection{Maximal orbit types in  products of basic $G$-degrees}
In order to effectively apply Proposition \ref{th:abstract}(c), one should answer the following question: which orbit types of maximal kind (see Definition \ref{def:extended-maximal}) appearing
in the right-hand side of formula \eqref{eq6} will ``survive" in the resulting product? This question has been studied in detail in 
 \cite{ZFGW}. Here we will present one result from \cite{ZFGW} essentially used in what follows. 

To begin with, take $\deg_{\cV_{k,l}^-}$ appearing in \eqref{eq6} and let $(H_o)$ be a maximal  orbit type in $\cV_{k,l}^-\setminus \{0\}$. Then (see 
\eqref{eq:RF-0}),  
\begin{equation}\label{eq:basic}
\deg_{\cV_{k,l}^-}=(G)- x_o(H_o) + a,\quad     -x_o:=\frac{(-1)^{\text{dim}\cV_{k,l}^{-H_o}}-1}{|W(H_o)|},
\end{equation}
where $a \in A(G)$ has  a zero coefficient corresponding to $(H_o)$.  
Then, by  \eqref{eq:basic}, one has  
\begin{equation}\label{eq:coef-x-o-ireduc}
x_o=\begin{cases}
0 & \; \text{if $\text{dim}\cV_{k,l}^{-H_o}$ is even}\\ 
1 & \text{ if $\text{dim}\cV_{k,l}^{-H_o}$ is odd  and $|W(H_o)|=2$}\\
2 &    \text{ if $\text{dim}\cV_{k,l}^{-H_o}$ is odd  and $|W(H_o)|=1$}.
\end{cases}
\end{equation}
 
We need additional notations. 
\begin{definition}\label{def:coef-H0} 
   (i) For any $(H_o)\in \Phi_0(G)$, define the function $\text{coeff}^{H_o} : A(G)\to \bz$ assigning  to any  $a=\sum_{(H)} n_H(H) \in A(G)$  the coefficient  $n_{H_o}$  standing by $(H_o)$. 

\smallskip

(ii) Given an orbit  type $(H_o) \in \Phi_0(G,\mathscr E)$ of maximal kind (see Definition \ref{def:extended-maximal}(a)) and $k = 0,1,2,\dots,$ 
define the integer 
\begin{equation}\label{eq:parity}
\mathfrak n^{H_o}_k:=\sum_{l=0}^{\mathfrak r} \mathfrak l^{H_o}_{k,l} \cdot m_{k,l}, 
\end{equation}
where  $m_{k,l}$ is given by \eqref{mtpl}   and
\begin{equation}
\mathfrak l^{H_o}_{k,l}  :=\begin{cases}
1 &\text{ if } \;  \text{dim}\cV_{k,l}^{-H_o} \;\; \text{is odd}\\ 
0 &\text{ otherwise}
\end{cases} 
\end{equation}
(cf. formulas \eqref{eq6}--\eqref{eq:coef-x-o-ireduc}).
\end{definition}    
The following statement was proved in \cite{ZFGW}. 
\begin{lemma}\label{lem:comp-1} Let $(H_o) \in \Phi_0(G,\mathscr E)$ be an orbit type of
maximal kind  (see Definition \ref{def:extended-maximal}(a)) and assume that for some $k\ge 0$, the number $\mathfrak n^{H_o}_k$ is odd 
(see Definition \ref{def:coef-H0}). Then,
\begin{equation}\label{eq:formula1}
\text{\rm coeff\,}^{H_o}\Big(   \gdeg(\scrA,B_{\varepsilon}(0))  \Big)=\pm x_o,
\end{equation}
where $x_o$ is given by 
\eqref{eq:coef-x-o-ireduc}.
\end{lemma}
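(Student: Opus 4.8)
The idea is to start from the product formula \eqref{eq6} for $\gdeg(\scrA, B(\scrE))$ and track the coefficient of the fixed maximal-kind orbit type $(H_o)$ through the multiplication in the Burnside ring $A(G)$. First I would recall that, by Remark \ref{rem:odd-powers-only}(b), only the parities of the exponents $m_{k,l}$ matter, so $\gdeg(\scrA, B_{\varepsilon}(0)) = \gdeg(\scrA, B(\scrE))$ equals the product of those $\deg_{\cV_{k,l}^-}$ for which $m_{k,l}$ is odd, each appearing to the first power. Grouping these factors by the Fourier index $k$, one gets $\gdeg(\scrA, B_{\varepsilon}(0)) = \prod_{k \ge 0} P_k$, where $P_k$ is the product over $l$ of the relevant basic degrees coming from mode $k$.

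Next I would invoke the structural result from \cite{ZFGW} on how a maximal-kind orbit type behaves under such products. The key point is that $(H_o)$ arises from a single Fourier mode $k$ (by Definition \ref{def:extended-maximal}(a), $H_o = G_u$ for some $u \in \bbV_k \setminus \{0\}$ with $(H_o)$ maximal in $\Phi(G, \bbV_k \setminus \{0\})$), so that in the product $\prod_{k' \ge 0} P_{k'}$ the coefficient of $(H_o)$ is governed entirely by the factor $P_k$: factors $P_{k'}$ with $k' \ne k$ contribute only the unit $(G)$ to the $(H_o)$-coordinate because $\cV_{k',l}^{-H_o}$ cannot produce $(H_o)$ when $k' \ne k$ (the subgroup $H_o$ determines the mode, cf. the analysis of maximal orbit types in \cite{ZFGW}). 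Within $P_k$, one uses the expansion \eqref{eq:basic}: each basic degree $\deg_{\cV_{k,l}^-}$ equals $(G) - x_o \mathfrak l^{H_o}_{k,l}(H_o) + (\text{terms with zero } (H_o)\text{-coefficient})$, where $\mathfrak l^{H_o}_{k,l}$ records whether $\dim \cV_{k,l}^{-H_o}$ is odd. Multiplying these out in $A(G)$, and using that products of the ``error'' terms $a$ again have vanishing $(H_o)$-coefficient (this is the maximality of $(H_o)$ doing its work: $(H_o)$ cannot appear in $G/H' \times G/H''$ for proper over-groups — this is exactly the lemma from \cite{ZFGW} I would cite), the $(H_o)$-coefficient of $P_k$ is $\pm x_o$ times the parity of the number of factors with $\mathfrak l^{H_o}_{k,l} = 1$ counted with multiplicity $m_{k,l}$, i.e. of $\mathfrak n^{H_o}_k = \sum_l \mathfrak l^{H_o}_{k,l} m_{k,l}$. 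Hence when $\mathfrak n^{H_o}_k$ is odd the coefficient is $\pm x_o$, which is \eqref{eq:formula1}.

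\textbf{Main obstacle.} The genuine content is the Burnside-ring bookkeeping encapsulated in the cited lemma of \cite{ZFGW}: one must know that for a maximal-kind orbit type $(H_o)$, (i) only over-groups of $H_o$ can carry a nonzero coefficient that interacts with $(H_o)$ under multiplication, (ii) the ``non-leading'' parts $a$ in \eqref{eq:basic} never contribute to the $(H_o)$-slot of any product, and (iii) the relation $(\deg_{\cV_{k,l}^-})^2 = (G)$ collapses everything to a parity count. Since the problem statement permits me to quote results stated earlier — and this lemma is explicitly attributed to \cite{ZFGW} in the surrounding text — the proof reduces to assembling these facts: reorganize \eqref{eq6} using Remark \ref{rem:odd-powers-only}, isolate the mode-$k$ factor, apply \eqref{eq:basic} and the cited multiplicativity properties to read off $\pm x_o \cdot (\mathfrak n^{H_o}_k \bmod 2)$, and conclude. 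I would not reprove the $\cite{ZFGW}$ lemma; the whole point of the present statement is to package it for use in Proposition \ref{th:abstract}(c), so the write-up should be short and cite that reference at the decisive step.
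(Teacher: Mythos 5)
Your approach is consistent with the paper's treatment of this statement: the paper offers no proof at all, saying only that the lemma ``was proved in \cite{ZFGW}'', and your reconstruction (reduce the exponents in \eqref{eq6} mod $2$, expand each basic degree as $(G)-x_o(H_o)+a$ via \eqref{eq:basic}, and track the $(H_o)$-coefficient through the Burnside-ring product) is the standard argument behind such results, with the decisive combinatorial facts correctly deferred to \cite{ZFGW}.

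One step in your sketch is asserted too quickly. In $A(G)$, the product $(L)\cdot(K)$ contains $(H_o)$ with nonzero multiplicity whenever $(H_o)$ is subconjugate to both $(L)$ and $(K)$; so to kill all unwanted cross-terms you need that no orbit type $(L)$ with $(G)\neq(L)\geq(H_o)$ occurs in \emph{any} factor of \eqref{eq6}. Inside the mode $k$ this is exactly the maximality of $(H_o)$ in $\Phi(G,\bbV_k\setminus\{0\})$, but for other modes $k'$ it is a separate claim: your justification that ``$\cV_{k',l}^{-H_o}$ cannot produce $(H_o)$ when $k'\neq k$'' only rules out $(H_o)$ itself appearing in those factors, not over-groups of $H_o$ (note that $\bbV_{k'}^{H_o}$ need not vanish, e.g.\ when $k\mid k'$, which is also why Definition \ref{def:coef-H0} bothers to define $\mathfrak l^{H_o}_{k',l}$ for all modes). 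Moreover the diagonal terms are not negligible: $(H_o)\cdot(H_o)=|W(H_o)|\,(H_o)+\dots$, and it is precisely these self-products that turn the count into a parity statement and produce the sign ambiguity $\pm x_o$ (one finds the $(H_o)$-coefficient equals $\bigl((1-x_o|W(H_o)|)^{\mathfrak n^{H_o}_k}-1\bigr)/|W(H_o)|$, which is $-x_o$ for odd $\mathfrak n^{H_o}_k$ and $0$ for even). Since these are exactly the points established in \cite{ZFGW}, citing it is appropriate; just make sure the citation is invoked for the cross-mode and diagonal contributions, not only for the within-one-irreducible bookkeeping.
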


\section{Computation of $\gdeg(\mathscr F,\Omega)$}\label{sec:degree-computation-D}

In this section, following the scheme suggested in \cite{Amster}, where the non-equivariant case without delays was considered,
we are going to establish the following
\begin{proposition}\label{prop:comput-G-deg-FrF-Omega}
Under the assumptions  ($\eta_1$)--($\eta_6$), (R), \ref{c1}--\ref{c2} and   \ref{c4}--\ref{c6} (resp.  ($\eta_1$)--($\eta_6$), (R), \ref{c1}--\ref{c2}, 
\ref{c4}--\ref{c5} and {\rm ($A_6'$)}), one has
\begin{equation}\label{G-deg-F}
 \gdeg(\mathscr F, \Omega) =   \gdeg(\nu,C)
\end{equation}
(here $\nu : C \to S^{n-1}$ stands for the Gauss map and $O(2)$ is assumed to act trivially on $\bfV$ identified with constant $\bfV$-valued maps).
\end{proposition}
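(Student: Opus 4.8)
The plan is to compute $\gdeg(\mathscr F,\Omega)$ by constructing an explicit $G$-equivariant $\Omega$-admissible homotopy that deforms $\mathscr F$ to a map whose degree is recognizably $\gdeg(\nu,C)$, following the scheme of \cite{Amster} but keeping track of the $G = O(2)\times\Gamma\times\mathbb Z_2$-equivariance throughout. First I would reduce the problem to a finite-dimensional one over $\overline D$ itself: since $\mathscr F = \id - L^{-1}(N_f\circ j - \mathfrak i)$ is a completely continuous field, and since by Lemmas \ref{lm:2} and \ref{lem:second-der-est} together with Lemma \ref{lm:1} no solution of the relevant homotopy family touches $\partial\Omega$ (the $\|\dot x\|_\infty < M+1$ and $\|\ddot x\|_\infty < N+1$ faces are crossed only with strict inequalities because $M$, $N$ were chosen with the safety margins $2\phi$, $K+1$, and the $x(t)\in D$ face is protected by the $C$-touching lemma), one can first homotope away the delay terms and the velocity dependence. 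Concretely, replace $f(x,\bold x_t,\dot x(t))$ by a homotopy $f_s$ that scales the $\bold y$ and $z$ arguments (e.g. $f(x,(1-s)\bold x_t + s\,x(t)\cdot\mathbf 1, (1-s)\dot x(t))$, or more carefully a deformation that preserves (R) and \ref{c1}--\ref{c2}); the a priori bounds of Section \ref{sec:a-priori} must be shown to persist uniformly along this family, which is where conditions \ref{c4}--\ref{c6} (resp. ($A_6'$)) are re-used with the doubled constants.

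The second step is the standard Amster-type deformation onto the Gauss map. After the reduction, one is left with an equivariant field on the space of loops valued in $\overline D$ whose zeros are the constant loops solving a reduced equation; pushing the loop-space problem down to its $\mathbb V_0$-component (the constants) via the linear homotopy $x \mapsto (1-s)x + s\,\overline x$ where $\overline x$ is the mean — which is $G$-equivariant since $O(2)$ acts trivially on $\mathbb V_0$ — collapses $\mathscr F$ to a field of the form $x \mapsto x - (\text{constant field built from } \nabla\eta)$. The point of \cite{Amster} is that on $C = \partial D = \eta^{-1}(0)$ the relevant vector field points outward precisely because of \ref{c4}, so after normalizing one obtains the Gauss map $\nu(x) = \nabla\eta(x)/|\nabla\eta(x)|$, and the excision/contraction properties of the equivariant degree give $\gdeg(\mathscr F,\Omega) = \gdeg(\nu, C)$, where now $C$ is viewed inside $\mathbb V_0$ with trivial $O(2)$-action and the residual $\Gamma\times\mathbb Z_2$-action inherited from $\bfV$. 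The homotopy invariance and multiplicativity/suspension properties of $\gdeg$ handle the passage between the infinite-dimensional completely-continuous-field degree and the finite-dimensional Brouwer equivariant degree of $\nu$ on the compact manifold $C$; here ($\eta_3$) and ($\eta_2$) guarantee $\nu$ is $\mathbb Z_2$- and $\Gamma$-equivariant so that $\gdeg(\nu,C)$ makes sense.

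The main obstacle I expect is verifying that the a priori bounds survive \emph{uniformly} along the chosen homotopy $f_s$ — i.e. that each intermediate $f_s$ still satisfies versions of \ref{c4}--\ref{c6} with constants independent of $s$, so that Lemmas \ref{lm:2}, \ref{lem:second-der-est}, \ref{lm:1} apply simultaneously to the whole family and $\Omega$ remains admissible for $\mathscr F_s$ for all $s\in[0,1]$. The curvature condition \ref{c4} is the delicate one: when the $z$-argument is scaled by $(1-s)$, the inequality $\langle f(x,\bold y,(1-s)z),n_x\rangle > \mathbb I_x((1-s)z) = (1-s)^2\mathbb I_x(z)$ must be checked for $z\perp n_x$, and similarly the scaling of $\bold y$ must stay within the $|\bold y|\le R$ regime demanded by \ref{c4}--\ref{c6}. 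I would organize this by first homotoping the $\bold y$-dependence (keeping $|\bold y|\le R$ throughout, which is automatic since values stay in $\overline D\subset B_R(0)$), then the $z$-dependence, checking at each stage that the relevant lemma's hypotheses hold with the already-inflated constants $2\phi$, $K+1$; the remaining identification of the collapsed field with $\nu$ on $C$ and the equality of the two degrees is then a routine application of the excision, homotopy and contraction properties, exactly as in \cite{Amster} with the equivariance carried along verbatim.
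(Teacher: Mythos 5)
Your overall scheme (deform away the velocity/delay dependence, average down to the constant loops, identify the resulting field on $D$ with the Gauss map) is the same as the paper's, and you correctly flag the delicate points; but the specific homotopies you propose do not survive the checks you yourself identify. Scaling only the velocity argument, $z\mapsto(1-s)z$, destroys \ref{c4} on the convex portions of $C$: for $z\perp n_x$, condition \ref{c4} yields $\langle f(x,\bold y,(1-s)z),n_x\rangle>\mathbb I_x((1-s)z)=(1-s)^2\,\mathbb I_x(z)$, and wherever $\mathbb I_x(z)>0$ this is strictly weaker than the bound $>\mathbb I_x(z)$ needed for Lemma \ref{lm:1} to exclude $C$-touching for the intermediate problems. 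The paper's deformation $f_{2,\widetilde{\lambda}}(x,\bold y,z)=\widetilde{\lambda}^2 f_2(x,\bold y,\widetilde{\lambda}^{-1}z)$ carries the compensating quadratic prefactor precisely so that $\langle f_{2,\widetilde{\lambda}},n_x\rangle>\widetilde{\lambda}^2\,\mathbb I_x(\widetilde{\lambda}^{-1}z)=\mathbb I_x(z)$; it is moreover a time reparametrization, which is how the a priori bound $|\dot x_{\widetilde{\lambda}}|\le\widetilde{\lambda}M$ of Lemma \ref{lem:scaling-formula} is obtained and why the inflated constants suffice uniformly.

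The more serious gap is the justification of the averaging step. Your claim that ``on $C$ the relevant vector field points outward precisely because of \ref{c4}'' is valid only for the terminal field $\Psi(x)=f(x,\dots,x,0)$ at zero velocity (where $\mathbb I_x(0)=0$). Along the homotopy that collapses the nonlinearity onto its mean, the normal component at a touching point involves values of $f$ at interior points of $D$ and at nonzero velocities, and \ref{c4} only bounds the normal component below by $\mathbb I_x(z)$, which is negative on the nonconvex arcs of $C$ --- the very situation this paper targets. This is why the paper first performs the outward homotopy \eqref{eq:f-lambda}: the added term $\lambda\bigl(\max\{0,-\langle f,\mathfrak n\rangle\}+\tfrac12\min\{1,\phi(z)\}\bigr)\mathfrak n(x)$ produces $f_2$ with $\langle f_2(x,\bold y,z),\mathfrak n(x)\rangle>0$ for \emph{all} $z$ (see \eqref{eq:outward-property}) while preserving (R), \ref{c1}, \ref{c2}, \ref{c4} and the inflated versions of \ref{c5}--\ref{c6}. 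Without this step (or an equivalent device), the admissibility of the homotopy $Q_0N+\mu P_0N$ in Lemma \ref{lem:crucial-lambda-mu} cannot be derived from \ref{c4}--\ref{c6} alone, and your proposal offers no substitute for it.
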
  

The proof of the above proposition splits into several steps related to successive $\Omega$-admissible $G$-equivariant homotopies. 

\subsection{Outward homotopy}

To begin with,  
denote by $\mathfrak n:V\to V$ a continuous extension of the Gauss map $\nu:C\to S^{n-1}$, such that 
$|\mathfrak n(x)|\le 1$, $\mathfrak n(\gamma x) = \gamma \mathfrak n(x)$ and $\mathfrak n(-x)=-\mathfrak n(x)$ for all $x\in \bfV$ and $\gamma\in \Gamma$. Such an extension exists due to the equivariant version of the Tietze Theorem (see, for example, \cite{KB}).  

Next, for $\lambda \in [0,2]$,  define the map $f_\lambda:\bfV\times \bfV^{m-1} \times \bfV \to \bfV$ by 
\begin{equation}\label{eq:f-lambda}
f_\lambda(x,\bold y, z):= f(x,\bold y, z) + \lambda \Big(\max \big\{0, - \langle f(x,\bold y,z),  \mathfrak n(x)\rangle \big\} + {1 \over 2} \min\big\{1,\phi(z)\big\} \Big) \mathfrak n(x),
\end{equation}
where $x\in \bfV$, $\bold y\in \bfV^{m-1}$, $z\in \bfV$. 
One has the following

\begin{lemma}\label{lem:A-lambda}
Under the assumptions of Proposition \ref{prop:comput-G-deg-FrF-Omega}, the map $f_{\lambda}$ given by \eqref{eq:f-lambda} satisfies the following properties:
\begin{itemize}
\item[(R$^{\lambda}$)] $f_\lambda(x,y^1, \cdots, ,y^{m-1},z) = f_\lambda(x,y^{m-1},y^{m-2}, \cdots, y^2, y^1,z)$ 
for all  $(x,y^1,\cdots,y^{m-1},z)\in \bfV^{m+1}$,
\end{itemize}
\begin{itemize}
\item[(A$_1^{\lambda}$)]$ f_{\lambda}$ is $\Gamma$-equivariant;
\end{itemize}
\begin{itemize} 
\item[(A$_2^{\lambda}$)]
for all $x,z\in \bfV$ and $\bold y\in \bfV^{m-1}$, one has:
        	\begin{itemize}
	\item[(i)] $f_{\lambda}(x,\bold y,-z) = f_{\lambda}(z,\bold y,z)$,
	\item[(ii)] $f_{\lambda}(-x,-\bold y,z) = -f_{\lambda}(z,\bold y,z)$;
        \end{itemize}		
\end{itemize}
\begin{itemize}
\item[(A$_4^{\lambda}$)] for any $x \in C$,  $\bold y \in \bfV^{m-1}$ and $z \in \bf V$ such that $|\bold y|\le R$  and $z \perp n_x$, one has
\begin{equation}\label{eq:A4}
\langle f_{\lambda}(x,\bold y,z) ,n_x \rangle > \mathbb I_x(z);
\end{equation}
\end{itemize}
\begin{itemize}
\item[(A$_5^{\lambda}$)] for any $(x, \bold y, z) \in \bfV \times \bfV^{m-1} \times \bf V$ with  ${|x|, |\bold y| \le R}$, one has   
\[
|f_{\lambda}(x, \bold y,z)|\le 2\phi(|z|),
\]
where $\phi$ is from \ref{c5};
\end{itemize}
\begin{itemize}
\item[(A$_6^{\lambda}$)] for any $(x, \bold y, z) \in \bfV \times \bfV^{m-1} \times \bf V$ with ${|x|, |\bold y| \le R}$, 
one has 
\[
|f_{\lambda}(x,\bold y,z)|\le \nabla^2\eta(x)(z,z) 
+\langle f_{\lambda}(x,\bold y,z),\nabla \eta(x)\rangle + K+1, 
\]
provided that $f$ satisfies \ref{c6};
\end{itemize}
\begin{itemize}
\item[(${A'}_6^{\lambda}$)]
\[
\forall_{|x|\le R}\;\forall_{|\bold y|\le R}\;\forall_{z\in V}\;\;\;\; |f_{\lambda}(x,\bold y,z)|\le \alpha (\langle x, f_{\lambda}(x,\bold y, z)\rangle +|z|^2) +K ,
\]
provided that $f$ satisfies {\rm ($A_6'$)}.

\end{itemize}
\end{lemma}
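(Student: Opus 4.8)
The strategy is to verify each of the listed properties of $f_\lambda$ by direct inspection of the defining formula \eqref{eq:f-lambda}, exploiting the fact that $f_\lambda$ differs from $f$ only by a nonnegative scalar multiple of the equivariant extension $\mathfrak n(x)$ of the Gauss map. Throughout, write
\[
g_\lambda(x,\bold y,z):= \lambda \Big(\max\big\{0,-\langle f(x,\bold y,z),\mathfrak n(x)\rangle\big\} + \tfrac12\min\big\{1,\phi(z)\big\}\Big),
\]
so that $f_\lambda = f + g_\lambda\,\mathfrak n(x)$, and note $g_\lambda \ge 0$. The reversibility (R$^\lambda$) and $\Gamma$-equivariance (A$_1^\lambda$) are immediate: the scalar $g_\lambda$ is symmetric in $y^1,\dots,y^{m-1}$ because $f$ is (by (R)) and $\phi(z)$ does not involve $\bold y$; and $g_\lambda(\gamma x,\gamma\bold y,\gamma z)=g_\lambda(x,\bold y,z)$ since $f$ is $\Gamma$-equivariant, $\mathfrak n$ is $\Gamma$-equivariant, and $\langle\cdot,\cdot\rangle$ is $\Gamma$-invariant, so $g_\lambda(\gamma x,\gamma\bold y,\gamma z)\,\mathfrak n(\gamma x) = \gamma\big(g_\lambda(x,\bold y,z)\mathfrak n(x)\big)$. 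For (A$_2^\lambda$)(i) one uses that $\phi$ is even and \ref{c2}(i) gives $f(x,\bold y,-z)=f(z,\bold y,z)$, hence $g_\lambda(x,\bold y,-z)=g_\lambda(z,\bold y,z)$ and the $\mathfrak n(x)$ factors match. For (A$_2^\lambda$)(ii) one combines \ref{c2}(ii), $f(-x,-\bold y,z)=-f(z,\bold y,z)$, with $\mathfrak n(-x)=-\mathfrak n(x)$ and the oddness of $\phi$'s argument only through $|z|$, to get $g_\lambda(-x,-\bold y,z)=g_\lambda(z,\bold y,z)$, so $f_\lambda(-x,-\bold y,z) = -f(z,\bold y,z) + g_\lambda(z,\bold y,z)\big(-\mathfrak n(x)\big) = -f_\lambda(z,\bold y,z)$.

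For the curvature/growth properties, the key point is that the extra term pushes $f_\lambda$ \emph{outward} along $\mathfrak n(x)=n_x$ on $C$, which can only help the inequalities. For (A$_4^\lambda$): fix $x\in C$ with $\mathfrak n(x)=n_x$; then
\[
\langle f_\lambda(x,\bold y,z),n_x\rangle = \langle f(x,\bold y,z),n_x\rangle + g_\lambda(x,\bold y,z)\ge \langle f(x,\bold y,z),n_x\rangle > \mathbb I_x(z)
\]
by \ref{c4}, since $g_\lambda\ge 0$ and $|n_x|=1$. For (A$_5^\lambda$): estimate $|f_\lambda|\le |f| + g_\lambda \le |f| + \lambda\big(|f| + \tfrac12\big) \le |f| + 2(|f| + \tfrac12)$ using $\lambda\le 2$, $\min\{1,\phi(z)\}\le 1$, and $\max\{0,-\langle f,\mathfrak n(x)\rangle\}\le |f|$; then invoke \ref{c5} ($|f|\le\phi(|z|)$) and $\phi\ge A>0$ (so $1\le\phi/A$, absorbing the constant) to conclude $|f_\lambda|\le 2\phi(|z|)$ after adjusting constants appropriately — here one should be a little careful that the constant $1$ is bounded by a multiple of $\phi$, which holds since $\phi(s)=A+Bs^2\ge A$. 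For (A$_6^\lambda$): the crucial observation is that the added scalar $g_\lambda$ appears \emph{also} inside the term $\langle f_\lambda(x,\bold y,z),\nabla\eta(x)\rangle$ on the right-hand side, because $\nabla\eta(x) = |\nabla\eta(x)|\,n_x = |\nabla\eta(x)|\,\mathfrak n(x)$ on $C$ — but for general $|x|\le R$ not on $C$ one needs $\mathfrak n(x)$ to be suitably related to $\nabla\eta(x)$, which is \emph{not} automatic. This is the main obstacle (see below).

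\textbf{Main obstacle.} The delicate step is (A$_6^\lambda$) (and the analogous (${A'}_6^\lambda$)): unlike (A$_4^\lambda$), which lives only on $C$ where $\mathfrak n(x)=n_x$ is collinear with $\nabla\eta(x)$, property (A$_6^\lambda$) must hold for \emph{all} $x$ with $|x|\le R$, where the extension $\mathfrak n(x)$ need not be parallel to $\nabla\eta(x)$, so the added outward term $g_\lambda\,\mathfrak n(x)$ does not simply reappear on the right. The plan to handle this is: write $|f_\lambda|\le |f| + g_\lambda$, bound $|f|\le\nabla^2\eta(x)(z,z)+\langle f(x,\bold y,z),\nabla\eta(x)\rangle + K$ by \ref{c6}, and then bound $g_\lambda$ by the \emph{increment} $\langle f_\lambda(x,\bold y,z),\nabla\eta(x)\rangle - \langle f(x,\bold y,z),\nabla\eta(x)\rangle = g_\lambda\langle\mathfrak n(x),\nabla\eta(x)\rangle$ up to an error controlled by the uniform bound $|\mathfrak n(x)|\le 1$ and the compactness of $\{|x|\le R\}$ (so $|\nabla\eta|$ is bounded there); the residual $g_\lambda\big(1 - \langle\mathfrak n(x),\nabla\eta(x)\rangle\big)$ together with the bound $g_\lambda\le\lambda(|f|+\tfrac12)\le 2(|f|+\tfrac12)$ and $\min\{1,\phi(z)\}\le 1$ must be absorbed into the additive constant — upgrading $K$ to $K+1$. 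One should check that $g_\lambda$ is in fact uniformly bounded on $\{|x|\le R, |\bold y|\le R\}$ \emph{independently of $z$}, which it is: $\min\{1,\phi(z)\}\le 1$ always, and $\max\{0,-\langle f(x,\bold y,z),\mathfrak n(x)\rangle\}$ — wait, this is \emph{not} bounded independently of $z$ since $f$ grows in $z$. So instead the residual term $g_\lambda\big(1-\langle\mathfrak n(x),\nabla\eta(x)\rangle\big)$ must be controlled by comparison with $\nabla^2\eta(x)(z,z)$, i.e. one uses that the growth of $g_\lambda$ in $z$ is at most quadratic (via \ref{c5}, $|f|\le A+B|z|^2$) and that $\nabla^2\eta(x)(z,z)$ provides a matching quadratic term up to adjusting $B$ — or, more cleanly, one argues that $\mathfrak n$ can be \emph{chosen} in a tubular neighborhood of $C$ to agree with $\nabla\eta/|\nabla\eta|$ (which is equivariant there), so that on that neighborhood $\langle\mathfrak n(x),\nabla\eta(x)\rangle = |\nabla\eta(x)| > 0$ and the residual vanishes, while away from $C$ (where $\eta$ is bounded away from $0$ but this does not directly help) one falls back on the quadratic comparison. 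I expect the cleanest route is the latter: construct $\mathfrak n$ to coincide with the normalized gradient near $C$, reducing the problem on the critical region to the $C$ case, exactly as in \cite{Amster}.
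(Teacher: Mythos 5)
Your verifications of (R$^{\lambda}$), (A$_1^{\lambda}$), (A$_2^{\lambda}$) and (A$_4^{\lambda}$) are correct and coincide with the computations the paper itself carries out; for the remaining three properties the paper only points to \cite{Amster}, p.~299, and this is where your plan does not close. The missing ingredient is one elementary observation: for $\lambda\in[0,2]$ and $|u|\le 1$, the map $v\mapsto v-\lambda\langle v,u\rangle u$ does not increase norms, since $|v-\lambda\langle v,u\rangle u|^2=|v|^2+\langle v,u\rangle^2\,(\lambda^2|u|^2-2\lambda)\le |v|^2$. Applied with $u=\mathfrak n(x)$, $v=f$ in the case $\langle f,\mathfrak n(x)\rangle<0$ (in the opposite case the $\max$ term vanishes), this gives $|f_\lambda|\le |f|+\tfrac{\lambda}{2}\min\{1,\phi(|z|)\}\,|\mathfrak n(x)|$, hence both $|f_\lambda|\le|f|+\phi(|z|)\le 2\phi(|z|)$, which is (A$_5^{\lambda}$), and the uniform bound $|f_\lambda|\le |f|+1$, which is exactly where the constant $K+1$ in (A$_6^{\lambda}$) comes from. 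Your substitute estimate $\max\{0,-\langle f,\mathfrak n(x)\rangle\}\le |f|$ followed by the triangle inequality yields only $|f_\lambda|\le 3|f|+1\le 3\phi(|z|)+1$, and the adjustment $1\le\phi/A$ makes the factor worse, not better; since $M$ in the definition of $\Omega$ is taken as $M(2\phi,\dots)$, the constant $2$ is not cosmetic.

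For (A$_6^{\lambda}$) and (${A'}_6^{\lambda}$) your diagnosis of the obstacle is correct and genuinely sharp: once $|f_\lambda|\le|f|+1$ is available, the inequality reduces to $g_\lambda\,\langle\mathfrak n(x),\nabla\eta(x)\rangle\ge 0$ (resp. $g_\lambda\,\langle x,\mathfrak n(x)\rangle\ge 0$) for all $|x|\le R$, and a bare equivariant Tietze extension of $\nu$ does not guarantee this --- $g_\lambda$ grows quadratically in $z$, so a negative sign of $\langle\mathfrak n(x),\nabla\eta(x)\rangle$ cannot be absorbed into an additive constant. However, you leave the resolution as an expectation rather than a proof, and your fallback (absorbing the residual into $\nabla^2\eta(x)(z,z)$) cannot work: that Hessian term sits on the right-hand side with no definite sign, so it cannot dominate a positive quadratic residual. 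The only viable route is the one you mention last: construct $\mathfrak n$ so that $\langle\mathfrak n(x),\nabla\eta(x)\rangle\ge 0$ on all of $\overline{B_R(0)}$, e.g.\ $\mathfrak n:=\chi\cdot\nabla\eta/|\nabla\eta|$ with a $\Gamma$-invariant even cutoff $\chi$ vanishing near the critical set of $\eta$ (equivariance and oddness of $\nabla\eta$ follow from ($\eta_2$) and ($\eta_3$)); this needs to be stated and carried out, not conjectured. As written, the proposal establishes (R$^{\lambda}$)--(A$_4^{\lambda}$) but not (A$_5^{\lambda}$), (A$_6^{\lambda}$) or (${A'}_6^{\lambda}$).
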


\begin{proof} 

\

(R$^{\lambda}$) For any $(x,y^1,\cdots,y^{m-1}, z) \in \bfV^{m+1}$, one has: 
\begin{align*}
f_\lambda(x,y^1,\cdots, y^{m-1}, z) &= f(x,y^1,\cdots, y^{m-1}, z) \\
&+ \lambda \Big(\max \big\{0, - \langle f(x,y^1, \cdots,y^{m-1},z),  \mathfrak n(x)\rangle \big\} + {1 \over 2} \min\big\{1,\phi(z)\big\} \Big) \mathfrak n(x)\\
&= f(x,y^{m-1}, \cdots ,y^1, z) \\
&+\lambda \Big(\max \big\{0, - \langle f(x,y^{m-1},\cdots, y^{1},z),  \mathfrak n(x)\rangle \big\} + {1 \over 2} \min\big\{1,\phi(z)\big\} \Big) \mathfrak n(x)\\
&= f_\lambda(x,y^{m-1}, \cdots, y^1, z) 
\end{align*}

(A$_1^{\lambda}$) Recall that $\Gamma$ acts orthogonally on $\bfV$, $f$ and $\mathfrak n$ are  $\Gamma$-equivariant, and $\phi$ is $\Gamma$-invariant. Hence, for any $\gamma \in \Gamma$ and $(x,\bold y, z) \in \bfV^{m+1}$, one has:
\begin{align*}
f_{\lambda}(\gamma (x,\bold y,z)) &= f_{\lambda} (\gamma x, \gamma \bold y, \gamma z) \\
&= f(\gamma x,\gamma \bold y, \gamma z) + \lambda \Big(\max \big\{0, - \langle f(\gamma x, \gamma \bold y,\gamma z),  \mathfrak n(\gamma x)\rangle \big\} + {1 \over 2} \min\big\{1,\phi(\gamma z)\big\} \Big) \mathfrak n(\gamma x)\\
&= \gamma f(x,\bold y, z) + \lambda \Big(\max \big\{0, - \langle \gamma f(x, \bold y, z),  \gamma  \mathfrak n(x)\rangle \big\} + {1 \over 2} \min\big\{1,\phi(z)\big\} \Big)\gamma  \mathfrak n(x)\\
&= \gamma f_{\lambda}(x,\bold y,z).
\end{align*}

(A$_2^{\lambda}$) For any $(x,\bold y, z) \in \bfV^{m+1}$, one has (by \ref{c2}):
\begin{align*}
f_{\lambda}(x,\bold y, -z) &= f(x,\bold y, -z) +  \lambda \Big(\max \big\{0, - \langle f(x, \bold y, -z),  \mathfrak n(x)\rangle \big\} + {1 \over 2} \min\big\{1,\phi(z)\big\} \Big) \mathfrak n(x) \\
&= f(x,\bold y, z) +  \lambda \Big(\max \big\{0, - \langle f(x, \bold y, z),  \mathfrak n(x)\rangle \big\} + {1 \over 2} \min\big\{1,\phi(z)\big\} \Big) \mathfrak n(x) =
 f_{\lambda}(x,\bold y, z).
\end{align*}
Also,
\begin{align*}
f_{\lambda}(-x,-\bold y, z) &= f(-x,-\bold y, z) +  \lambda \Big(\max \big\{0, - \langle f(-x, -\bold y, z),  \mathfrak n(-x)\rangle \big\} + {1 \over 2} \min\big\{1,\phi(z)\big\} \Big) \mathfrak n(-x) \\
&= -f(x,\bold y, z) +  \lambda \Big(\max \big\{0, - \langle -f(x, \bold y, z), - \mathfrak n(x)\rangle \big\} + {1 \over 2} \min\big\{1,\phi(z)\big\} \Big) (-\mathfrak n(x))\\
&= -f_{\lambda}(x,\bold y,z).
\end{align*}

(A$_4^{\lambda}$)  For any $(x,\bold y, z) \in \bfV^{m+1}$, one has (by \ref{c4}): 
\begin{align*}
\langle f_{\lambda}(x,\bold y, z), \mathfrak n(x)\rangle &= \langle f (x,\bold y, z), \mathfrak n(x)\rangle + 
\lambda \Big(\max \big\{0, - \langle f(x, \bold y, z),  \mathfrak n(x)\rangle \big\} + {1 \over 2} \min\big\{1,\phi(z)\big\} \Big) |\mathfrak n(x)|^2 \\
& > \mathbb I_x(z) +  \lambda \Big(\max \big\{0, - \langle f(x, \bold y, z),  \mathfrak n(x)\rangle \big\} + {1 \over 2} \min\big\{1,\phi(z)\big\} \Big) \geq
\mathbb I_x(z). 
\end{align*}
Finally, to prove (A$_5^{\lambda}$), (A$_6^{\lambda}$) and (A$_7^{\lambda}$), one can use the same argument as in \cite{Amster}, p. 299.
\end{proof}  

\bigskip
Using \eqref{eq:f-lambda}, define 
the map $\mathscr F_\lambda:\mathscr E\to \mathscr E$ by 
\begin{equation}\label{eq:F-lambda}
 \scrF_\lambda(x) := x- L^{-1} \big(N_{f_\lambda}(j(x))-\mathfrak i(x)\big), \;\; x\in\scrE, 
\end{equation}
where  $N_{f_\lambda}  :  C(S^1,\bfV^{m+1}) \rightarrow  C(S^1,\bfV)$ is the  Nemytskii operator  given by 
 \begin{equation}\label{eq:Nem-lambda} 
 (N_{f_\lambda}(x,\bold y,z))(t) := f_{\lambda}(x(t),y^1(t),\dots,y^{m-1}(t),z(t)) \quad (t \in \mathbb R, \;\; \lambda \in [0,2]).
\end{equation}
\bigskip

Combining Lemma \ref{lem:A-lambda} with the definition of $\Omega$ and the argument used in the proof of Lemma \ref{lm:1}, one obtains the 
following
\begin{corollary}\label{cor:adissible-ourward}
Under the assumptions of Proposition \ref{prop:comput-G-deg-FrF-Omega}, formulas  \eqref{eq:F-lambda}--\eqref{eq:Nem-lambda} define
a $G$-equivariant $\Omega$-admissible homotopy. In particular,
\begin{equation}\label{eq:deg-F-F2}
\gdeg(\mathscr F,\Omega) = \gdeg(\mathscr F_2, \Omega).
\end{equation}
\end{corollary}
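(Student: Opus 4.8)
The plan is to verify the three claimed properties of $f_\lambda$ (namely (R$^\lambda$), (A$_1^\lambda$), (A$_2^\lambda$)) together with the curvature condition (A$_4^\lambda$), and then the two growth estimates (A$_5^\lambda$) and either (A$_6^\lambda$) or (${A'}_6^\lambda$), simply by plugging the definition \eqref{eq:f-lambda} of $f_\lambda$ into each condition and using the corresponding property of the original $f$. All the ``symmetry'' verifications are essentially cosmetic: the extra term added to $f$ is built out of the scalars $\langle f(\cdot),\mathfrak n(\cdot)\rangle$ and $\phi(\cdot)$ multiplied by the vector $\mathfrak n(x)$, and each ingredient has the right invariance/equivariance behavior. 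For (R$^\lambda$) the added term does not involve the ordering of $y^1,\dots,y^{m-1}$ at all except through $f$ itself, so (R) for $f$ gives (R$^\lambda$); for (A$_1^\lambda$) one uses that $\Gamma$ acts orthogonally (so the inner product is preserved), that $f$ and $\mathfrak n$ are $\Gamma$-equivariant, and that $\phi$ is $\Gamma$-invariant; for (A$_2^\lambda$)(i) one uses \ref{c2}(i) for $f$ together with $\phi(z)=\phi(-z)$ (here $\phi(s)=A+Bs^2$ depends on $|z|$ only) and the fact that $\mathfrak n(x)$ does not see $z$; for (A$_2^\lambda$)(ii) one uses \ref{c2}(ii) for $f$, $\mathfrak n(-x)=-\mathfrak n(x)$, and that the two sign flips inside the max cancel so the scalar coefficient is unchanged while the vector $\mathfrak n$ picks up a minus sign.

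For (A$_4^\lambda$), the key observation is that on the boundary $C$ the extension $\mathfrak n$ agrees with $\nu$, so $\mathfrak n(x)=n_x$ and $|\mathfrak n(x)|^2=1$; hence $\langle f_\lambda(x,\bold y,z),n_x\rangle = \langle f(x,\bold y,z),n_x\rangle + \lambda(\max\{0,-\langle f(x,\bold y,z),\mathfrak n(x)\rangle\} + \tfrac12\min\{1,\phi(z)\})$. The second summand is $\geq 0$ since $\lambda\in[0,2]$, $\phi\geq A>0$, and the max is nonnegative, so $\langle f_\lambda(x,\bold y,z),n_x\rangle \geq \langle f(x,\bold y,z),n_x\rangle > \mathbb I_x(z)$ by \ref{c4}. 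For the growth conditions (A$_5^\lambda$), (A$_6^\lambda$), (${A'}_6^\lambda$) I would follow the argument of \cite{Amster}, p.~299: using $|\mathfrak n(x)|\leq 1$ one bounds the added term by $\lambda(|f(x,\bold y,z)| + \tfrac12\phi(|z|))$, and the two cases of the max are handled separately — when $-\langle f,\mathfrak n\rangle \le 0$ the added term is at most $\tfrac12\lambda\phi(|z|)\le\phi(|z|)$, giving (A$_5^\lambda$) with the factor $2$; when $-\langle f,\mathfrak n\rangle>0$ the crucial point is that adding a nonnegative multiple of $\mathfrak n(x)$ increases $\langle f_\lambda,\nabla\eta(x)\rangle$ (since $\nabla\eta(x)$ is a positive multiple of $\mathfrak n(x)=n_x$ on $C$, and one arranges the relevant estimate near $C$), so the right-hand sides of the inequalities in \ref{c6}/(${A}_6'$) grow at least as fast as the left-hand side, absorbing the extra $+1$ into the constant.

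Once Lemma \ref{lem:A-lambda} is in hand, Corollary \ref{cor:adissible-ourward} follows by exactly the scheme already used for Lemma \ref{lm:4}: define $\mathscr F_\lambda$ by \eqref{eq:F-lambda}--\eqref{eq:Nem-lambda}, note that $L^{-1}$, $j$, $\mathfrak i$ are fixed ($\lambda$-independent) $G$-equivariant operators and $\lambda\mapsto N_{f_\lambda}$ is continuous, so $(\lambda,x)\mapsto \mathscr F_\lambda(x)$ is a continuous $G$-equivariant homotopy of completely continuous fields; then run the contradiction argument of Lemma \ref{lm:4} uniformly in $\lambda$. Concretely, if $\mathscr F_\lambda(x)=0$ for some $\lambda\in[0,2]$ and $x\in\partial\Omega$, then $x$ solves \eqref{eq:x-t-form} with $f$ replaced by $f_\lambda$; by (A$_5^\lambda$) and (A$_6^\lambda$) (resp.\ (${A'}_6^\lambda$)) together with the choice $M=M(2\phi,\eta,K+1,p,R)$ (resp.\ $M=M(2\phi,\alpha,K+1,p,R)$) made in \eqref{eq:Omega-set} one gets $\|\dot x\|_\infty\le M<M+1$ and $\|\ddot x\|_\infty\le N<N+1$ (using Lemmas \ref{lm:2} and \ref{lem:second-der-est} applied to $f_\lambda$; this is precisely why the ``safety margin'' $2\phi$, $K+1$ was built into the definition of $\Omega$), so $x\not\in\Omega$ forces $x(t_o)\in C$ for some $t_o$, contradicting Lemma \ref{lm:1} applied with $f_\lambda$ (legitimate by (A$_4^\lambda$)). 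Hence the homotopy is $\Omega$-admissible, and homotopy invariance of $\gdeg$ gives \eqref{eq:deg-F-F2}. The main obstacle is the careful bookkeeping in the growth estimates of Lemma \ref{lem:A-lambda} — in particular verifying that the added term can indeed be absorbed into $K+1$ without destroying the inequality, which near $C$ relies on $\nabla\eta(x)$ pointing along $\mathfrak n(x)$; everything downstream is a routine repetition of the already-established admissibility argument.
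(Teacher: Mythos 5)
Your proposal is correct and takes essentially the same approach as the paper. The paper's own justification of Corollary \ref{cor:adissible-ourward} is a single sentence (invoking Lemma \ref{lem:A-lambda}, the definition of $\Omega$, and the $C$-touching argument from Lemma \ref{lm:1}), and your write-up fills in precisely the intended details — in particular the observation that the safety margins $2\phi$ and $K+1$ built into the choice of $M$ in \eqref{eq:Omega-set} are exactly what make Lemmas \ref{lm:2}--\ref{lem:second-der-est} applicable uniformly to $f_\lambda$, $\lambda\in[0,2]$, and that $(A_4^\lambda)$ licenses the use of Lemma \ref{lm:1} on the boundary.
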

\begin{remark}\label{rem:outward} Obviously (see \eqref{eq:f-lambda} and \cite{Amster}, p. 300), the following inequality takes place: 
\begin{equation}\label{eq:outward-property}
\forall_{x\in C,\,z \in \bfV, \bold y \in \bfV^{m-1}} \qquad\langle f_2 (x, \bold y, z),\mathfrak n(x) \rangle > 0. 
\end{equation}
It follows from \eqref{eq:outward-property} that for any $x \in C$, the vector 
\begin{equation}\label{eq:Psi-for}
\Psi(x):= f_2 (x, x, \cdots,x, 0)
\end{equation}
is pointed outward  the interior of $\overline{D}$ (giving rise to the title of this subsection). Hence,  $\Psi$ and $\nu$ are $G$-equivariantly homotopic and
\begin{equation}\label{eq:deg-outward}
\gdeg(\Psi,C) = \gdeg (\nu,C)
\end{equation}

\end{remark}

\subsection{Scaling homotopy} 

To perform further deformations, we need the following
\begin{lemma}\label{lem:scaling-formula} Under the assumptions of Proposition \ref{prop:comput-G-deg-FrF-Omega},
take $M$ provided by Lemma \ref{lm:2}, $f_2$ given by \eqref{eq:f-lambda} and $\widetilde{\lambda} \in (0,1)$. Then, any 
$C^2$-smooth solution $x_{\widetilde{\lambda}} = x_{\widetilde{\lambda}}(t)$ to problem
\begin{align}\label{eq:x-t-form-lambda}
\left\{  \begin{aligned}
 \ddot{x}(t)& = \widetilde{\lambda}^2 f_2\left(x(t), \bold{x}_t, \widetilde{\lambda}^{-1} \dot x(t) \right),\quad t\in\bbR,\;x(t)\in \overline{D} \subset \mathbf{V} = \mathbb R^n,\\
 x(t)&=x(t+p),\quad\dot{x}(t) = \dot{x}(t+p)
 \end{aligned}\right.
 \end{align}
satisfies the inequality
\begin{equation}\label{eq:ap-2-lambda}
	\forall_{t\in \br}\;\;\;  |\dot x_{\widetilde{\lambda}}(t)|\le  \widetilde{\lambda} M.
\end{equation}
\end{lemma}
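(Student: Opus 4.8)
The plan is to reduce \eqref{eq:x-t-form-lambda} to a problem to which Lemma \ref{lm:2} directly applies, via the substitution $y(t) := x_{\widetilde\lambda}(\widetilde\lambda^{-1} t)$ (equivalently, rescaling time by a factor $\widetilde\lambda$). Under this change of variable, $\dot y(t) = \widetilde\lambda^{-1}\dot x_{\widetilde\lambda}(\widetilde\lambda^{-1}t)$ and $\ddot y(t) = \widetilde\lambda^{-2}\ddot x_{\widetilde\lambda}(\widetilde\lambda^{-1}t)$, so the equation $\ddot x_{\widetilde\lambda}(s) = \widetilde\lambda^2 f_2(x_{\widetilde\lambda}(s),\bold{x}_{\widetilde\lambda,s},\widetilde\lambda^{-1}\dot x_{\widetilde\lambda}(s))$ becomes $\ddot y(t) = f_2(y(t), \dots, \dot y(t))$ — i.e.\ \eqref{eq:x-t-form} with $f$ replaced by $f_2$ — provided the delayed arguments transform correctly. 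Here is the first point requiring care: the delays $\tau_k = 2\pi k/m$ in $\bold{x}_{\widetilde\lambda,s}$ become delays $\widetilde\lambda\tau_k$ in the $y$-variable, and the period becomes $\widetilde\lambda p$ rather than $p$. So $y$ solves a problem of the same structural type but with rescaled delays and rescaled period. I would check that Lemma \ref{lm:2} (and the a priori bound constant $M$ it produces) is insensitive to shrinking both the delays and the period by the common factor $\widetilde\lambda < 1$; indeed, inspecting the proof of Lemma \ref{lm:2}, the bound \eqref{eq:ap-0} for $|\dot x(0)|$ is $\tfrac12 Kp$, which only \emph{decreases} when $p$ shrinks, and the delay values $\tau_k$ never enter the estimates (only the uniform bound $|\bold{x}_t|\le R$, which is preserved). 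Thus $M$ remains a valid bound.

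Alternatively — and this is probably the cleaner route — I would avoid the time-rescaling bookkeeping entirely and rerun the argument of Lemma \ref{lm:2} directly on \eqref{eq:x-t-form-lambda}. Set $\boldsymbol\eta(t) := \eta(x_{\widetilde\lambda}(t))$. Since $|x_{\widetilde\lambda}(t)|\le R$ and $|\bold{x}_{\widetilde\lambda,t}|\le R$, condition (A$_6^\lambda$) of Lemma \ref{lem:A-lambda} applied to $f_2$ gives $|f_2(x_{\widetilde\lambda}(t),\bold{x}_{\widetilde\lambda,t},\widetilde\lambda^{-1}\dot x_{\widetilde\lambda}(t))| \le \nabla^2\eta(x_{\widetilde\lambda}(t))(\widetilde\lambda^{-1}\dot x_{\widetilde\lambda}(t),\widetilde\lambda^{-1}\dot x_{\widetilde\lambda}(t)) + \langle f_2(\cdots),\nabla\eta(x_{\widetilde\lambda}(t))\rangle + K+1$. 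Multiplying the differential equation through and using $\widetilde\lambda^2 \cdot \widetilde\lambda^{-2} = 1$, one gets $|\ddot x_{\widetilde\lambda}(t)| = \widetilde\lambda^2|f_2(\cdots)| \le \boldsymbol\eta''(t) + \widetilde\lambda^2(K+1) \le \boldsymbol\eta''(t) + (K+1)$, which is exactly inequality \eqref{eq:ap-3} with constant $K+1$ in place of $K$. The integration-by-parts identities \eqref{eq:ap-4}--\eqref{eq:ap-5} are purely kinematic and unchanged, so \eqref{eq:ap-6}--\eqref{eq:ap-0} yield $|\dot x_{\widetilde\lambda}(0)| \le \tfrac12(K+1)p$, and by translation invariance $\|\dot x_{\widetilde\lambda}\|_\infty \le \tfrac12(K+1)p + $ (lower-order) along the lines of \eqref{eq:ap-8}.

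The remaining work — and the one genuinely new ingredient compared to Lemma \ref{lm:2} — is extracting the factor $\widetilde\lambda$ on the right-hand side of \eqref{eq:ap-2-lambda}. This comes from the growth condition (A$_5^\lambda$): $|f_2(x_{\widetilde\lambda}(t),\bold{x}_{\widetilde\lambda,t},\widetilde\lambda^{-1}\dot x_{\widetilde\lambda}(t))| \le 2\phi(\widetilde\lambda^{-1}|\dot x_{\widetilde\lambda}(t)|)$, so $|\ddot x_{\widetilde\lambda}(t)| \le 2\widetilde\lambda^2\phi(\widetilde\lambda^{-1}|\dot x_{\widetilde\lambda}(t)|)$. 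Introducing $w(t) := \widetilde\lambda^{-1}\dot x_{\widetilde\lambda}(t)$, one has $|\dot w(t)| = \widetilde\lambda^{-1}|\ddot x_{\widetilde\lambda}(t)| \le 2\widetilde\lambda\phi(|w(t)|) \le 2\phi(|w(t)|)$ (since $\widetilde\lambda<1$), and the pointwise bound on $\dot x_{\widetilde\lambda}$ derived above gives $|w(t)| \le \widetilde\lambda^{-1}(\tfrac12(K+1)p + \cdots)$. Running the $\Phi$-substitution argument of \eqref{eq:ap-8}--\eqref{estim-M} (with $2\phi$, $K+1$, and noting $\widetilde R := \max\{|\eta(x)|:x\in\overline D\}$ is unchanged) produces a bound $|w(t)| \le M$ with exactly the same constant $M = M(2\phi,\eta,K+1,p,R)$ as in Lemma \ref{lm:2}; unwinding, $|\dot x_{\widetilde\lambda}(t)| = \widetilde\lambda|w(t)| \le \widetilde\lambda M$, which is \eqref{eq:ap-2-lambda}. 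The main obstacle I anticipate is bookkeeping the $\widetilde\lambda$ powers consistently through the $\Phi$-integral estimate so that the final constant is genuinely the \emph{same} $M$ (not merely some $\widetilde\lambda$-dependent one), which is what makes this lemma useful for the subsequent admissibility-of-homotopy argument; the key observation making this work is that the two $\widetilde\lambda$'s in $\widetilde\lambda^2 f_2(\cdot,\widetilde\lambda^{-1}\cdot)$ are arranged precisely so that $w = \widetilde\lambda^{-1}\dot x_{\widetilde\lambda}$ satisfies the \emph{un-rescaled} estimate.
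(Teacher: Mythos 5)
Your first route is precisely the paper's proof: substitute $u(t):=x_{\widetilde{\lambda}}(t/\widetilde{\lambda})$, observe that $u$ solves the unrescaled equation $\ddot u=f_2(u,\dots,\dot u)$ with delays $\widetilde{\lambda}\tau_k$ and period $p\widetilde{\lambda}$, invoke Lemma \ref{lm:2} (whose proof uses the delays only through the uniform bound $|\mathbf{x}_t|\le R$ and whose constant is increasing in the period, so $M(2\phi,\eta,K+1,p\widetilde{\lambda},R)\le M$), and unwind via $\dot u(t)=\widetilde{\lambda}^{-1}\dot x_{\widetilde{\lambda}}(t/\widetilde{\lambda})$. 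That part is complete and matches the paper.

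Your preferred second route, however, does not close as written. The step where you weaken $|\ddot x_{\widetilde{\lambda}}(t)|\le\boldsymbol\eta''(t)+\widetilde{\lambda}^2(K+1)$ to $|\ddot x_{\widetilde{\lambda}}(t)|\le\boldsymbol\eta''(t)+(K+1)$ discards exactly the factor you need later. With the weakened bound you only obtain $|\dot x_{\widetilde{\lambda}}(0)|\le\tfrac12(K+1)p$, hence $|w(0)|\le\widetilde{\lambda}^{-1}\tfrac12(K+1)p$ for $w:=\widetilde{\lambda}^{-1}\dot x_{\widetilde{\lambda}}$; feeding this into the $\Phi$-argument of \eqref{eq:ap-8}--\eqref{estim-M} gives $\Phi(|w(t)|)\le\tfrac12(K+1)p^2+2\widetilde R+\Phi\bigl(\widetilde{\lambda}^{-1}\tfrac12(K+1)p\bigr)$, a $\widetilde{\lambda}$-dependent bound that blows up as $\widetilde{\lambda}\to0$, not the fixed $M$. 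The repair is to keep the $\widetilde{\lambda}^2$: adding the two integration-by-parts estimates cancels the $\boldsymbol\eta'(0)$ terms and yields $|\dot x_{\widetilde{\lambda}}(0)|\le\tfrac12\widetilde{\lambda}^2(K+1)p$, hence $|w(0)|\le\tfrac12\widetilde{\lambda}(K+1)p\le\tfrac12(K+1)p$, and likewise $|\dot x_{\widetilde{\lambda}}(t)|\le\tfrac12\widetilde{\lambda}^2(K+1)p-\boldsymbol\eta'(t)$ controls the integral term; only then does the $\Phi$-argument land on the same $M$. Since this is exactly the ``bookkeeping of $\widetilde{\lambda}$ powers'' you yourself flag as the main obstacle, the second route is a sketch with its crucial step missing rather than a proof; the paper's time-rescaling avoids the issue entirely because the rescaled period $p\widetilde{\lambda}$ automatically carries the needed factor into the bound for $|\dot u(0)|$.
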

\begin{proof}
Put $u(t):= x_{\widetilde{\lambda}} (t/{\widetilde{\lambda}})$. Since  $\ddot{u} = \widetilde{\lambda}^{-2}\ddot{x}(t/{\widetilde{\lambda}})$, one can easily show (cf. \cite{Amster}, pp. 297-298) that 
\begin{align}\label{eq:x-t-form-lambda-1}
\left\{  \begin{aligned}
 \ddot{u}(t) &= f_2\left(u(t), u(t - \widetilde{\lambda} \tau_1), \cdots, u(t - \widetilde{\lambda} \tau_{m-1}), \dot u(t) \right),     \quad t\in\bbR,\;u(t)\in \overline{D} \subset \mathbf{V} = \mathbb R^n,\\
 u(t) &= u(t + p \widetilde{\lambda}),\quad\dot{u}(t) = \dot{u}(t+p \widetilde{\lambda})
 \end{aligned}\right.
 \end{align}
Therefore, one can use formula  \eqref{estim-M} (resp. \eqref{eq:M-from-Xiaoli}) to obtain $M_1 = M_1(2\phi, \eta, K+1, p \widetilde{\lambda},R)$ 
(resp. $M_1 = M_1(2\phi, \alpha, K+1, p \widetilde{\lambda},R)$ such that 
\begin{equation}\label{eq:est-lam-2}
\forall_{t\in \br}\;\;\;  |\dot u(t)|\le  M_1.
\end{equation} 
Since $\widetilde{\lambda} < 1$ and $\Phi$ in  \eqref{estim-M} (resp. \eqref{eq:M-from-Xiaoli}) is increasing, formula \eqref{eq:est-lam-2} combined with the 
chain rule yields \eqref{eq:ap-2-lambda}. 
\end{proof}

\medskip
\begin{remark}\label{eq:explanation-non-equiv} In contrast to problem \eqref{eq:x-t-form-lambda}, problem \eqref{eq:x-t-form-lambda-1} is not equivariant. The reader should not be confused with that: Lemma \ref{lm:2}  providing a priori bound for the first derivative of solution is {\it independent} of the symmetry conditions
\ref{c1} and \ref{c2}. 

\end{remark}

Given $u \in C(S^1,\bfV)$, denote 
\begin{equation}\label{eq:average}
\overline{u}:= {1 \over 2\pi} \int_0^{p} u(t)dt.
\end{equation}
Formula \eqref{eq:average} suggests two projections $Q_0,P_0 :   C(S^1,\bfV) \to   C(S^1,\bfV)$ given by 
\begin{equation}\label{eq:oper-Q-P}
Q_0u := \overline{u} \qquad \text{and}  \quad P_0  := \id - Q_0
\end{equation}
(as usual, we identify $\bfV$ with the image of $Q_0$ -- the subspace of constant $\bfV$-valued maps $S^1 \to \bfV$). 
Similarly to \eqref{eq:average} and \eqref{eq:oper-Q-P}, define projections $Q_2,P_2 : \scrE \to \scrE$, respectively.

For any $\widetilde{\lambda} \in (0,1)$, put 
$f_{2,\widetilde{\lambda}}(x,\bold y, z) := \widetilde{\lambda}^2 f_2\big(x, \bold{y}, \widetilde{\lambda}^{-1} z \big)$ (cf. \eqref{eq:x-t-form-lambda}) and consider a $\mu$-parameterized family of operators $\mathfrak{F}_{\widetilde{\lambda},\mu} : \scrE \to \scrE$ given by
\begin{equation}\label{eq-mu-parametr}
\mathfrak{F}_{\widetilde{\lambda},\mu}(x) := x - L^{-1}\left(Q_0N_{f_{2,\widetilde{\lambda}}} (jx) + \mu P_0N_{f_{2,\widetilde{\lambda}}} (jx)   -\mathfrak i(x)\right),
\qquad \mu \in [0,1]
\end{equation}
(here the projections $P_0$, $Q_0$ are given by \eqref{eq:average}--\eqref {eq:oper-Q-P} and $N_{f_{2,\widetilde{\lambda}}}$ denotes the corresponding Nemytskii operator).

\begin{lemma}\label{lem:crucial-lambda-mu} Under the assumptions of Proposition \ref{prop:comput-G-deg-FrF-Omega}, there exists 
$\widetilde{\lambda}_o \in (0,1]$ such that the $\mu$-parameterized family $\mathfrak{F}_{\widetilde{\lambda_o},\mu}$  (see \eqref{eq-mu-parametr}) is an $\Omega$-admissible  $G$-equivariant homotopy. In particular (cf. \eqref{eq:deg-F-F2}),
\begin{equation}\label{eq:deg-mathfrakF-2-lm}
 \gdeg(\mathfrak F,\Omega) = \gdeg(\mathfrak{F}_{\widetilde{\lambda}_o,0},\Omega),
\end{equation}
where $\mathfrak{F}_{\widetilde{\lambda}_o,0}(x) = x -   L^{-1}\left(Q_0N_{f_{2,\widetilde{\lambda}_o}} (jx)  -\mathfrak i(x)\right)$.
\end{lemma}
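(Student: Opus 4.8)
The plan is to prove Lemma~\ref{lem:crucial-lambda-mu} by showing that for $\widetilde{\lambda}_o$ chosen sufficiently small, no zero of $\mathfrak{F}_{\widetilde{\lambda}_o,\mu}$ can lie on $\partial\Omega$ for any $\mu\in[0,1]$; the $G$-equivariance being clear from the $G$-equivariance of $L$, $j$, $\mathfrak{i}$, the Nemytskii operator $N_{f_{2,\widetilde{\lambda}}}$ (which inherits equivariance from (R$^\lambda$) and (A$_1^\lambda$)--(A$_2^\lambda$) of Lemma~\ref{lem:A-lambda}), and the equivariance of the projections $P_0,Q_0$ (these commute with the $\Gamma\times\mathbb{Z}_2$-action and, since they average over the circle, with the $SO(2)$-action; $\kappa$-equivariance follows from the change of variables $t\mapsto -t$ in the integral). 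So the entire content is admissibility. First I would unwind what $\mathfrak{F}_{\widetilde{\lambda},\mu}(x)=0$ means: applying $L$ and splitting via $\id = Q_0+P_0$ (recall $L|_{\mathbb{V}_0}=-\id$, so $L$ respects the $Q_0$/$P_0$ splitting by \eqref{eq:action-L}), the equation is equivalent to the system consisting of the ``mean'' equation $Q_0 N_{f_{2,\widetilde{\lambda}}}(jx) = 0$ together with the ``oscillatory'' equation $L(P_0 x) = \mu P_0 N_{f_{2,\widetilde{\lambda}}}(jx) - \mathfrak{i}(P_0 x)$, i.e. $\ddot{w} = \mu P_0 N_{f_{2,\widetilde{\lambda}}}(jx)$ where $w=P_0 x$.

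Next I would set up the a priori bounds for such solutions uniformly in $\mu$. The key observation (this is the role of Lemma~\ref{lem:scaling-formula} and the scaling $f_{2,\widetilde{\lambda}}(x,\mathbf{y},z)=\widetilde{\lambda}^2 f_2(x,\mathbf{y},\widetilde{\lambda}^{-1}z)$) is that for $\mu=1$ the oscillatory-plus-mean system is exactly problem \eqref{eq:x-t-form-lambda}, whose solutions living in $\overline{D}$ satisfy $\|\dot{x}\|_\infty \le \widetilde{\lambda} M < M$; for $\mu\in[0,1)$ one argues the same way, since the a priori bound argument of Lemma~\ref{lm:2} only uses the estimate obtained by integrating $\ddot{x}$ against $(t+p-s)$ over a period, and inserting the factor $\mu\le 1$ in front of $P_0 N_{f_{2,\widetilde{\lambda}}}(jx)$ only improves (or leaves unchanged) every estimate — more carefully, one checks that $f_{2,\widetilde{\lambda}}$ still satisfies (A$_5^\lambda$), (A$_6^\lambda$) (resp. (${A'}_6^\lambda$)) with the same constants after the $\widetilde{\lambda}^2$, $\widetilde{\lambda}^{-1}$ rescaling (this is the content of the $\Phi$-monotonicity remark in the proof of Lemma~\ref{lem:scaling-formula}), and that the Hartman--Nagumo argument goes through with $\mu$ present, because $\mu P_0 g$ replaces $g - \overline{g}$ in a way that does not enlarge $|\ddot{x}|$ relative to $\boldsymbol\eta''(t)+K+1$. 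Thus one gets $\|\dot{x}\|_\infty \le \widetilde{\lambda}_o M < M+1$; then Lemma~\ref{lem:second-der-est} applied to $f_{2,\widetilde{\lambda}_o}$ gives $\|\ddot{x}\|_\infty = \|\mu P_0 N_{f_{2,\widetilde{\lambda}_o}}(jx)\|_\infty \le N < N+1$ after possibly shrinking $\widetilde{\lambda}_o$; and the $C$-touching Lemma~\ref{lm:1} — whose proof only uses (A$_4^\lambda$), valid for $f_2$ hence for $f_{2,\widetilde{\lambda}_o}$ on $C$ after accounting for the $z\mapsto\widetilde{\lambda}_o^{-1}z$ substitution, which rescales $\mathbb{I}_x(z)$ by $\widetilde{\lambda}_o^{-2}$ but also the left side by $\widetilde{\lambda}_o^2$ on $z\perp n_x$ — forces $x(t)\in D$ for all $t$, never on $C$. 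I should double-check the $C$-touching step in the $\mu<1$ case: there $\ddot{x} = \mu P_0 N_{f_{2,\widetilde{\lambda}_o}}(jx)$ rather than $N_{f_2}(jx)$, so $\langle\ddot{x},n_{\alpha(t)}\rangle$ equals $\mu\langle f_{2,\widetilde{\lambda}_o}(\cdots),n_{\alpha(t)}\rangle - \mu\langle\overline{N},n_{\alpha(t)}\rangle$ — I would handle this by noting that it suffices to locate the contradiction at a touching point $t_o$ and use that the outward-pointing property \eqref{eq:outward-property} of $f_2$ on $C$, combined with $\mu\ge 0$, still yields the sign needed, or alternatively restrict attention to the deformation at $\mu=0$ separately (since \eqref{eq-mu-parametr} at $\mu=0$ already has $\ddot{x}=0$, forcing $x$ constant, hence $x=\overline{x}$ with $Q_0 N_{f_{2,\widetilde{\lambda}_o}}(\overline{x},\dots,\overline{x},0)=\widetilde{\lambda}_o^2 f_2(\overline{x},\dots,\overline{x},0)=0$, which by the outward property cannot happen for $\overline{x}\in C$, only for $\overline{x}\in D$). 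Combining all three bounds, $x\notin\partial\Omega$, so the homotopy is $\Omega$-admissible and \eqref{eq:deg-mathfrakF-2-lm} follows from the homotopy invariance of $\gdeg$, with $\mathfrak{F}_{\widetilde{\lambda}_o,1} = \mathfrak{F}_{\widetilde{\lambda}_o}$ being (after the inverse scaling) $G$-homotopic to $\mathscr{F}_2$ and hence, via Corollary~\ref{cor:adissible-ourward}, to $\mathscr{F}$; I will need a one-line remark tying $\gdeg(\mathfrak{F}_{\widetilde{\lambda}_o,1},\Omega)$ back to $\gdeg(\mathscr{F}_2,\Omega)=\gdeg(\mathscr{F},\Omega)$ via the scaling diffeomorphism $x\mapsto x(\cdot/\widetilde{\lambda}_o)$ argument sketched in the proof of Lemma~\ref{lem:scaling-formula}.

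The main obstacle I anticipate is the uniformity in $\mu$ of the a priori bounds: Lemma~\ref{lm:2} and Lemma~\ref{lem:second-der-est} are stated for the genuine equation \eqref{eq:x-t-form}, not for the $\mu$-truncated equation $\ddot{x}=\mu P_0 N(jx)$, so I must re-run the integration-by-parts estimate \eqref{eq:ap-4}--\eqref{eq:ap-0} with the projected, $\mu$-weighted right-hand side and verify that the constant $M$ does not blow up as $\mu\to 1$ (it should not, precisely because $\int_0^p \ddot{x}\,ds = 0$ automatically whenever $\ddot{x}$ has zero mean, so replacing $N$ by $\mu P_0 N$ is consistent with periodicity for every $\mu$, and the pointwise bound $|\ddot{x}(t)| = \mu|P_0 N(jx)(t)| \le \mu(|N(jx)(t)| + |\overline{N(jx)}|) \le 2\mu\cdot(\text{bound on }|N|)$ is still controlled by the right-hand side of (A$_6^\lambda$) up to doubling — which is exactly why the ``$2\phi$'' and ``$K+1$'' safety margins were built into $M$ in \eqref{eq:Omega-set}). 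Carefully bookkeeping these factors of $2$ and $\widetilde{\lambda}_o$, and confirming that one can shrink $\widetilde{\lambda}_o$ to absorb them simultaneously for the first-derivative bound, the second-derivative bound, and the $C$-touching exclusion, is the technical heart of the proof; everything else is a routine transcription of the scheme of \cite{Amster} into the equivariant, delayed, scaled setting.
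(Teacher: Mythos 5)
Your proposal follows the same strategy the paper uses: equivariance is component-wise, and admissibility is obtained by re-running the a priori bound and $C$-touching arguments for the $\mu$-parametrized equation $\ddot x = \mu P_0 N_{f_{2,\widetilde\lambda}}(jx)$, with the scaling estimate of Lemma~\ref{lem:scaling-formula} supplying the needed smallness of $\dot x$. Your decomposition $Q_0 N_{f_{2,\widetilde\lambda}}(jx)=0$ together with $\ddot x = \mu P_0 N_{f_{2,\widetilde\lambda}}(jx)$ is exactly right, and your handling of $\mu=0$ (constant $x$, then the outward property of $f_2$ rules out $\overline x\in C$) matches the intended scheme. The paper's own proof is terse — it merely points to \cite{Amster}, p.~300 — so you are not more hand-wavy than the source, but two of your specific claims deserve tightening.

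First, the statement that replacing $g-\overline g$ by $\mu P_0 g$ ``does not enlarge $|\ddot x|$ relative to $\boldsymbol\eta''(t)+K+1$'' is not automatic. For $\ddot x = \mu f_{2,\widetilde\lambda}(x,\mathbf x_t,\dot x)$ one has $\boldsymbol\eta''(t)=\nabla^2\eta(x)(\dot x,\dot x)+\langle\ddot x,\nabla\eta(x)\rangle$, and the (A$_6^\lambda$)-type estimate for $\mu f_{2,\widetilde\lambda}$ produces $\mu\nabla^2\eta(x)(\dot x,\dot x)$ in place of $\nabla^2\eta(x)(\dot x,\dot x)$; the difference $(\mu-1)\nabla^2\eta(x)(\dot x,\dot x)$ has the wrong sign precisely when $\nabla^2\eta$ is negative in the direction $\dot x$ — which the paper explicitly allows (the example has mixed curvature). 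This is where the smallness of $\widetilde\lambda_o$ (hence of $\|\dot x\|_\infty$, so $|\nabla^2\eta(\dot x,\dot x)|=O(\widetilde\lambda_o^2)$, absorbable into the $K{+}1$ margin) must enter; you should make that dependence explicit rather than asserting the inequality directly. Note also that the $(A_6')$ route is cleaner here: one checks directly that $\mu f_{2,\widetilde\lambda}$ satisfies $(A_6')$ with the same constants for all $\mu\in[0,1]$, $\widetilde\lambda\in(0,1]$, with no sign issue.

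Second, in the $C$-touching step you suggest the outward property \eqref{eq:outward-property} ``combined with $\mu\ge 0$, still yields the sign needed.'' That is not enough on its own: the touching inequality at $t_o$ reads $\mu\langle f_{2,\widetilde\lambda}(\,\cdot\,),n_{x(t_o)}\rangle\le\mathbb I_{x(t_o)}(\dot x(t_o))$, and positivity of the left side does not contradict this when $\mathbb I_{x(t_o)}(\dot x(t_o))>0$. What saves the argument is that the construction of $f_2$ in Lemma~\ref{lem:A-lambda} actually gives the strict margin $\langle f_2(x,\mathbf y,z),n_x\rangle>\mathbb I_x(z)+\min\{1,\phi(|z|)\}\ge\mathbb I_x(z)+\min\{1,A\}$, which after the $\widetilde\lambda$-rescaling becomes $\langle f_{2,\widetilde\lambda},n_x\rangle>\mathbb I_x(z)+\widetilde\lambda^2\min\{1,A\}$; this, together with $\mathbb I_x(\dot x(t_o))=O(\widetilde\lambda^2\|\widetilde\lambda^{-1}\dot x\|^2)=O(\widetilde\lambda^2 M^2)$, is the quantitative input needed to close the contradiction for $\mu$ bounded away from $0$, and you must then quote the separate $\mu=0$ argument (or a compactness argument à la Amster sending $\widetilde\lambda\to 0$) for the small-$\mu$ range. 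As written, your sketch jumps over exactly the place where the choice of $\widetilde\lambda_o$ is forced.
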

\begin{proof}
Following the same lines as in the proof of Lemma \ref{lem:A-lambda}((R$^{\lambda}$), (A$_1^{\lambda}$) and (A$_2^{\lambda}$)), one can easily establish that \eqref{eq-mu-parametr} is $G$-equivariant for any $\widetilde{\lambda} \in (0,1)$ and $\mu \in [0,1]$. Next, keeping in mind that 
$\mathbb I_x(\cdot)$ is a quadratic form and using (A$_4^{\lambda}$), one obtains
\begin{equation*}
 \langle f_{2,\widetilde{\lambda}}(x,\bold y,z), n_x \rangle = 
\langle \widetilde{\lambda}^2 f_{2} (x,\bold y, \widetilde{\lambda}^{-1} z), n_x\rangle >  
 \widetilde{\lambda}^2 \mathbb I_x(\widetilde{\lambda}^{-1} z, \widetilde{\lambda}^{-1} z ) =  \mathbb I_x(z),
 \end{equation*}
so that $f_{2,\widetilde{\lambda}}$ satisfies the analog of  (A$_4^{\lambda}$).
Finally, arguing by contradiction, and
combining the same idea as in \cite{Amster}, p. 300, with estimate \eqref{eq:ap-2-lambda} one arrives at the contradiction with Lemma \ref{lm:1}, from which the existence of the required $\widetilde{\lambda}_o $ follows. 
\end{proof}

\medskip

To complete the proof of Proposition \ref{prop:comput-G-deg-FrF-Omega}, it remains to establish the following
\begin{lemma}\label{lem:splitting}
Under the assumptions of Proposition \ref{prop:comput-G-deg-FrF-Omega}, one has (cf. \eqref{eq:deg-mathfrakF-2-lm})
\begin{equation}\label{lem:final-split}
\gdeg(\mathfrak{F}_{\widetilde{\lambda}_o,0},\Omega) =  \gdeg(\nu,C).
\end{equation}
\end{lemma}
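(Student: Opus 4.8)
The plan is to exploit the fact that $\mathfrak F_{\widetilde\lambda_o,0}$ only ``sees'' the nonlinearity through its average $Q_0 N_{f_{2,\widetilde\lambda_o}}$, so the operator decomposes along the $G$-invariant splitting $\scrE = \bbV_0 \oplus \overline{\bigoplus_{k\ge 1}\bbV_k}$. First I would observe that on the complement $P_2\scrE = \overline{\bigoplus_{k\ge 1}\bbV_k}$ the map $\mathfrak F_{\widetilde\lambda_o,0}$ restricts to $x \mapsto x - L^{-1}(-\mathfrak i(x)) = x - L^{-1}(-x)$, which (using \eqref{eq:action-L}) is the linear isomorphism acting as $\id - \tfrac{1}{k^2+1}\id = \tfrac{k^2}{k^2+1}\id$ on $\bbV_k$ — a positive scalar on each mode, hence $G$-equivariantly homotopic to the identity with trivial degree contribution. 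On $Q_2\scrE \cong \bfV$ (constant maps, with $O(2)$ acting trivially), the operator becomes $x \mapsto x - L^{-1}|_{\bbV_0}\big(\overline{N_{f_{2,\widetilde\lambda_o}}(jx)} - x\big) = x + \big(\widetilde\lambda_o^2\,\overline{f_2(x,\dots,x,0)} - x\big) = \widetilde\lambda_o^2\, f_2(x,x,\dots,x,0) = \widetilde\lambda_o^2\,\Psi(x)$ in the notation \eqref{eq:Psi-for}, since for a constant map $jx = (x,\dots,x,0)$ and the average is just the value.

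Next I would invoke the product property \eqref{eq:mult-property} of the equivariant degree on the $G$-invariant product decomposition: $\gdeg(\mathfrak F_{\widetilde\lambda_o,0},\Omega) = \gdeg\big(\mathfrak F_{\widetilde\lambda_o,0}|_{Q_2\scrE}, \Omega\cap Q_2\scrE\big)\cdot \gdeg\big(\mathfrak F_{\widetilde\lambda_o,0}|_{P_2\scrE}, \Omega\cap P_2\scrE\big)$. The second factor equals $(G)$, the unit of $A(G)$, by the homotopy to the identity just described (which is $\Omega$-admissible because the restricted operator is a linear isomorphism with only $0$ in its kernel, and $0\in\Omega$). For the first factor, $\Omega\cap Q_2\scrE$ is, by \eqref{eq:Omega-set}, exactly the set of constant maps with value in $D$ (the derivative constraints being vacuous), i.e. $D$ itself; thus $\gdeg(\widetilde\lambda_o^2\Psi, D) = \gdeg(\Psi,D)$ since $\widetilde\lambda_o^2 > 0$ is a homotopy scaling. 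Now $\Psi$ has no zeros on $C = \partial D$ by \eqref{eq:outward-property}, and by excision/contraction $\gdeg(\Psi,D) = \gdeg(\Psi,B)$ for a small ball — but more directly, Remark \ref{rem:outward} already records that $\Psi$ is pointed outward along $C$, hence $G$-equivariantly homotopic (via the straight-line homotopy $t\Psi(x) + (1-t)\nu(x)$, which stays nonzero on $C$ because both vectors have positive inner product with $\mathfrak n(x)$) to the Gauss map $\nu$, giving $\gdeg(\Psi,C) = \gdeg(\nu,C)$ as in \eqref{eq:deg-outward}. Finally, identifying the degree over $D$ with the boundary degree over $C = \partial D$ (a standard normalization for a domain whose boundary map extends), one concludes $\gdeg(\mathfrak F_{\widetilde\lambda_o,0},\Omega) = \gdeg(\nu,C)$.

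The step I expect to be the main obstacle is making rigorous the reduction of $\gdeg(\Psi,D)$ — a degree of a map defined on a domain $D\subset\bfV$ — to $\gdeg(\nu,C)$, the degree of the Gauss map on the boundary hypersurface $C$. These live on spaces of different dimension, so the identification is really the statement that for the outward-pointing field $\Psi$ the $G$-equivariant degree over $D$ is computed by its boundary behavior, exactly as in the non-equivariant Poincaré–Hopf / boundary-degree correspondence; one must check that this classical fact is compatible with the $G$-action, using that $D$ is a $G$-invariant domain with $G$-invariant boundary and that $\Psi$ restricted to $C$ is $G$-equivariantly homotopic to $\nu$. I would handle this by citing the corresponding computation in \cite{Amster} (which treats precisely this passage in the non-equivariant setting) and noting that every homotopy and excision used there is through $\Gamma\times\bbZ_2$-equivariant maps — the $O(2)$-factor acting trivially on $\bfV$ throughout — so the argument transfers verbatim to $G = O(2)\times\Gamma\times\bbZ_2$. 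The remaining verifications (admissibility of each homotopy on $\partial\Omega$, which follows from Lemma \ref{lm:1} together with the a priori bounds of Lemmas \ref{lm:2} and \ref{lem:second-der-est} exactly as in Lemma \ref{lm:4} and Lemma \ref{lem:crucial-lambda-mu}) are routine.
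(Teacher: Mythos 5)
Your overall strategy matches the paper's — decompose $\scrE = Q_2\scrE \oplus P_2\scrE$, identify the map on the constant part with $\widetilde\lambda_o^2\Psi$, argue that the non-constant part contributes trivially, and close by using Remark~\ref{rem:outward} — but there is a genuine gap in the way you invoke the product property. Writing $x = Q_2x + P_2x$, the operator takes the form
\[
\mathfrak{F}_{\widetilde{\lambda}_o,0}(x)
= \underbrace{-\,L^{-1}\!\left(Q_0 N_{f_{2,\widetilde{\lambda}_o}}\bigl(jQ_2x + jP_2x\bigr)\right)}_{\in\,Q_2\scrE}
\;+\;\underbrace{\bigl(P_2x + L^{-1}\mathfrak{i}\,P_2x\bigr)}_{\in\,P_2\scrE},
\]
so the $Q_2$-component depends on \emph{both} $Q_2x$ and $P_2x$ through the nonlinearity $N_{f_{2,\widetilde{\lambda}_o}}$; this is not a product map of the form $f_1\times f_2$, and consequently \eqref{eq:mult-property} cannot be applied to $\mathfrak{F}_{\widetilde{\lambda}_o,0}$ directly. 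Your assertion that ``on $P_2\scrE$ the map restricts to $x\mapsto x - L^{-1}(-\mathfrak{i}(x))$'' is the projected restriction $P_2\circ\mathfrak{F}_{\widetilde{\lambda}_o,0}|_{P_2\scrE}$, not the actual restriction, which carries a nontrivial $Q_2$-component $-L^{-1}Q_0 N_{f_{2,\widetilde{\lambda}_o}}(jP_2x)$ (nonzero in general since $f_{2,\widetilde{\lambda}_o}$ is nonlinear). The paper bridges this gap with the $\delta$-homotopy \eqref{eq:prelast-homot}, which replaces $jP_2x$ by $(1-\delta)jP_2x$ in the $Q_2$-component and simultaneously deforms away the $L^{-1}\mathfrak{i}P_2x$ term, arriving at $\delta=1$ at the genuine product map $\widetilde{\mathfrak{F}}(x) = (-L^{-1}Q_0 N_{f_{2,\widetilde{\lambda}_o}}(jQ_2x),\,P_2x)$. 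Admissibility of that homotopy on $\partial\Omega$ is itself a point requiring the a priori bounds and the $C$-touching lemma; it is not automatic and is precisely why the preparatory Lemmas~\ref{lm:1}, \ref{lm:2}, \ref{lem:second-der-est} are invoked. A secondary issue of the same flavor: $\Omega$ is not a product domain, so even after the decoupling homotopy one must excise down to a product neighborhood $D\times B(P_2\scrE)$ of the zero set (all zeros of $\widetilde{\mathfrak{F}}$ have vanishing $P_2$-component) before the product property applies.

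Once the decoupling homotopy and excision are in place, the rest of your proposal is on target: the $Q_2$-factor gives $\gdeg(\widetilde{\lambda}_o^2\Psi,D)=\gdeg(\Psi,D)$, the $P_2$-factor gives the unit $(G)$ since $\id$ there is an isomorphism acting as the positive scalar $k^2/(k^2+1)$ on each mode, and the last passage to $\gdeg(\nu,C)$ via \eqref{eq:deg-outward} and the outward-pointing property \eqref{eq:outward-property} is exactly what the paper uses. You are also right that the identification of the degree over $D$ with a boundary degree along $C$ is the delicate final step; the paper is equally terse here and defers, as you propose, to the corresponding computation in \cite{Amster}. So the missing ingredient is specifically the decoupling homotopy (and the excision to a product domain), without which the product property — the load-bearing tool in your argument — is not available.
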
 
\begin{proof}
One has
\begin{align*}
\mathfrak{F}_{\widetilde{\lambda}_o,0}(x) &= x -   L^{-1}\left(Q_0N_{f_{2,\widetilde{\lambda}_o}} (jx)  -\mathfrak i(x)\right) \\
&= Q_2x + P_2x -  L^{-1}\left(Q_0N_{f_{2,\widetilde{\lambda}_o}} (jQ_2x + jP_2x)  -\mathfrak i(Q_2x + P_2x)\right)\\
&= \left(Q_2x - L^{-1}\left(Q_0N_{f_{2,\widetilde{\lambda}_o}} (jQ_2x + jP_2x)\right) + L^{-1}\mathfrak i Q_2x\right) 
+ \left( P_2x   + L^{-1}\left(\mathfrak i P_2x\right)\right)\\
&= - L^{-1}\left(Q_0N_{f_{2,\widetilde{\lambda}_o}} (jQ_2x + jP_2x)\right) +  \left( P_2x   + L^{-1}\left(\mathfrak i P_2x\right)\right)
\end{align*}
(cf. \eqref{eq:action-L}). Formula
\begin{equation}\label{eq:prelast-homot}
\mathfrak{F}_{\widetilde{\lambda}_o,0,\delta}(x) = - L^{-1}\left(Q_0N_{f_{2,\widetilde{\lambda}_o}} (jQ_2x +(1-\delta) jP_2x)\right) +  \left( P_2x   + 
(1- \delta) L^{-1}\left(\mathfrak i P_2x\right)\right), \quad \delta \in [0,1],
\end{equation}
defines a $G$-equivariant $\Omega$-admissible homotopy of $\mathfrak{F}_{\widetilde{\lambda}_o,0}$ to 
$$\widetilde{\mathfrak F}(x) := \left(- L^{-1} Q_0N_{f_{2,\widetilde{\lambda}_o}} (jQ_2x), P_2x \right)$$
(see again\eqref{eq:action-L}). Clearly, 
$$
\gdeg(\widetilde{\mathfrak F},\Omega) = \gdeg \left(- L^{-1} Q_0N_{f_{2,\widetilde{\lambda}_o}} (jQ_2), D\right) \cdot \gdeg(\id, B(P_2\mathscr E)),  
$$
where $B(P_2\mathscr E)$ stands for the unit ball in $P_2\mathscr E$. It remains to observe that 
$$\gdeg \left(- L^{-1} Q_0N_{f_{2,\widetilde{\lambda}_o}} (jQ_2), D\right) = \gdeg(\Psi,D)$$ 
(see \eqref{eq:Psi-for}) and use \eqref{eq:deg-outward}.
\end{proof}

Using the same Morse Lemma argument as in the proof of Theorem 5.6 from \cite{Amster}, one can easily establish the following
\begin{lemma}\label{eq:contractible}
Let $\eta : \bfV \to \mathbb R$ satisfy ($\eta_1$), ($\eta_4$)--($\eta_6$), and let $f : \bfV \times \bfV^{m-1} \times \bfV \to \bfV$ be a continuous map 
satisfying \ref{c5}--\ref{c6}. Then, $D$ is contractible. 
\end{lemma}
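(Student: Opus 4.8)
The plan is to reproduce the Morse-theoretic argument from the proof of Theorem~5.6 in \cite{Amster}: everything comes down to showing that $\eta$ has no ``saddle-type'' critical points in $D$, after which $D$ is seen to be built from $0$-handles only and hence to be contractible. First I would record the geometric setup. By ($\eta_6$) the closure $\overline D$ is a compact $C^{2}$-manifold with boundary $C=\eta^{-1}(0)$, and by ($\eta_5$) the value $0$ is regular, so $\nabla\eta$ does not vanish on $C$; hence $\mathrm{Crit}(\eta)\cap\overline D$ is a compact subset of the \emph{open} set $D\subset B_R(0)$, and it is nonempty because $\eta|_{\overline D}$ attains its minimum at an interior point (as $\eta|_C\equiv 0>\eta(0)$ by ($\eta_4$)).

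Next I would use \ref{c6} to pin down the Hessian of $\eta$ near its critical set. Let $x\in\overline D$ with $|\nabla\eta(x)|<1$, and take any $\bold y$ with $|\bold y|\le R$ (e.g.\ $\bold y=0$; note $|x|\le R$ since $D\subset B_R(0)$). For every $z\in\bfV$, \ref{c6} together with the Cauchy--Schwarz inequality gives
\[
0\le |f(x,\bold y,z)|\le\nabla^{2}\eta(x)(z,z)+|f(x,\bold y,z)|\,|\nabla\eta(x)|+K ,
\]
so $|f(x,\bold y,z)|\,(1-|\nabla\eta(x)|)\le\nabla^{2}\eta(x)(z,z)+K$; since the left side is $\ge 0$, the quadratic form $z\mapsto\nabla^{2}\eta(x)(z,z)$ is bounded below by $-K$, hence is positive semidefinite. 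Thus $\nabla^{2}\eta$ is positive semidefinite on the open neighborhood $\{\,|\nabla\eta|<1\,\}$ of $\mathrm{Crit}(\eta)$, so $\eta$ is convex near each of its critical points; in particular every critical point of $\eta$ in $D$ is a local minimum. Moreover, on $\{\,|\nabla\eta|<1\,\}$ the set $\mathrm{Crit}(\eta)=\{\nabla\eta=0\}$ locally coincides with a sublevel set of a convex function, hence is locally convex, so each connected component of the compact set $\mathrm{Crit}(\eta)\cap\overline D$ is convex (a closed, connected, locally convex subset of $\bfV$ is convex), and therefore contractible.

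Finally I would run the deformation argument. The downward gradient flow of $\eta$ is defined for all positive times and stays in $D$ (along $C$ the vector $-\nabla\eta$ points strictly into $D$, since $n_x=\nabla\eta(x)/|\nabla\eta(x)|$ is the outer normal, cf.\ \eqref{eq:n-x}, and $\eta$ does not increase along trajectories). Exactly as in \cite{Amster}, $D=\eta^{-1}(-\infty,0)$ deformation retracts onto the sublevel sets $\eta^{-1}(-\infty,c]$ as $c\downarrow m_0:=\min_{\overline D}\eta$, and when a critical value is crossed the homotopy type changes only by disjointly gluing on neighborhoods of the critical components at that level. Since all critical points are local minima there are no ascending directions: each such piece is attached as a new connected component, it never afterwards merges with the rest (there is no saddle through which to merge it), and it deformation retracts onto the corresponding convex, hence contractible, critical component. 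Therefore $D$ is homotopy equivalent to a disjoint union of contractible sets, one per connected component of $\mathrm{Crit}(\eta)\cap\overline D$; as $D$, being a domain, is connected, this disjoint union has a single summand and $D$ is contractible. (If connectedness is not built into the word ``domain'', the same argument shows that every connected component of $D$ is contractible.)

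The one genuinely delicate point, which I would take from \cite[Thm.~5.6]{Amster} rather than reprove, is the local analysis at the critical points of $\eta$: they may be \emph{degenerate}, even non-isolated, minima, so one has to justify carefully that crossing such a critical value glues on only a neighborhood retracting onto the (possibly positive-dimensional, but convex) critical set, and one must dispatch the usual $C^{2}$-regularity bookkeeping (structure of the set of critical values, smoothing of the rescaled gradient flow). This is precisely the content of the Morse Lemma argument in that reference.
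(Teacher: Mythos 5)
Your proposal is correct and reconstructs precisely the Morse Lemma argument from \cite{Amster} that the paper simply cites: the pivot is extracting from \ref{c6} via Cauchy--Schwarz that $\nabla^{2}\eta$ is positive semidefinite wherever $|\nabla\eta|<1$, so that $\eta$ is convex near its critical set and every critical point in $D$ is a (possibly degenerate) local minimum, which you correctly identify as the heart of the matter. The one loose end, which you flag honestly, is that ($\eta_1$)--($\eta_6$) do not by themselves state that $D=\eta^{-1}(-\infty,0)$ is connected; this too follows from the same convexity fact via a mountain-pass observation (a minimax path joining two hypothetical components of $D$ would produce a critical point of $\eta$ at which $\eta$ has a directional local maximum while being convex nearby, a contradiction), and is presumably also part of the cited argument.
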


\begin{corollary}\label{cor:G-deg}
Under the assumptions of Proposition \ref{prop:comput-G-deg-FrF-Omega},
\begin{equation}\label{eq:final-F}
\gdeg(\mathfrak{F}, \Omega) = (G).
\end{equation}
\end{corollary}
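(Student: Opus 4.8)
The plan is to chain together the computations already assembled in Sections~\ref{sec:degree-computation-D}. By Proposition~\ref{prop:comput-G-deg-FrF-Omega} we have $\gdeg(\mathscr F,\Omega)=\gdeg(\nu,C)$, so the entire task reduces to computing $\gdeg(\nu,C)$, where $\nu:C\to S^{n-1}$ is the Gauss map of the boundary $C=\partial D$ and $O(2)$ acts trivially on $\bfV$. Since the $O(2)$-action is trivial here, the $G$-equivariant degree of $\nu$ is really just a $\Gamma\times\bz_2$-equivariant (equivalently, Brouwer-type) degree computed over constant $\bfV$-valued maps, and the relevant input is purely the geometry/topology of the pair $(\overline D,C)$.

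First I would invoke Lemma~\ref{eq:contractible}: under the standing hypotheses ($\eta_1$), ($\eta_4$)--($\eta_6$) together with \ref{c5}--\ref{c6} (which are part of the assumptions of Proposition~\ref{prop:comput-G-deg-FrF-Omega}), the domain $D$ is contractible; in particular $\overline D$ is a compact contractible smooth $\Gamma$-manifold with boundary containing $0$ in its interior, and it is equivariantly diffeomorphic to a ball-like object. Next I would identify $\gdeg(\nu,C)$ with the $G$-degree of an extension: since $\mathfrak n:\bfV\to\bfV$ is a $\Gamma$- and $\bz_2$-equivariant continuous extension of $\nu$ with $|\mathfrak n(x)|\le 1$, and since $\nu$ is nowhere zero on $C$, the map $\mathfrak n$ (or equivalently the outward-pointing field $\Psi$ of Remark~\ref{rem:outward}) is $\Omega$-admissible-type data on $\overline D$, and $\gdeg(\nu,C)=\gdeg(\mathfrak n,D)$ by the very definition of the degree on a manifold with boundary via a boundary normal field.

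The key step is then to observe that, because $\overline D$ is equivariantly contractible with $0$ an interior fixed point and because $\mathfrak n$ is odd (by ($\eta_3$) and the construction of $\mathfrak n$) and points outward along $C$, the vector field $\mathfrak n$ is $G$-equivariantly homotopic on $\overline D$ to the identity field $x\mapsto x$ (or to $-(-\id)$ restricted to a ball), via a straight-line deformation after an equivariant isotopy of $\overline D$ onto a round ball $B_r(0)$; the outward condition \eqref{eq:outward-property} guarantees admissibility of this homotopy on the boundary throughout. Hence $\gdeg(\nu,C)=\gdeg(\id,B_r(0))$. Finally, $\gdeg(\id,B(V))=(G)$, the unit of the Burnside ring $A(G)$, by the normalization property of the equivariant degree (the identity map has degree equal to the class of the trivial orbit). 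Combining, $\gdeg(\mathfrak F,\Omega)=\gdeg(\nu,C)=(G)$, which is \eqref{eq:final-F}.

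The main obstacle I expect is making the ``equivariantly homotope $\mathfrak n$ to the identity'' step airtight: one must ensure the deformation from the outward Gauss-type field on $C$ to the radial field stays admissible (nowhere zero on the boundary of the shrinking region) at every stage, and this uses essentially the same outward-pointing argument recorded in Remark~\ref{rem:outward} (following \cite{Amster}, p.~300) promoted to the equivariant category; the contractibility from Lemma~\ref{eq:contractible} must be upgraded to an equivariant statement, which is where one appeals to the equivariant Morse-lemma/tubular-neighbourhood machinery rather than just the non-equivariant one. Once that is in place the rest is the standard normalization and homotopy invariance of $\gdeg$.
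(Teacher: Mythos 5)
Your proposal is correct and follows essentially the same route as the paper: the paper's own proof simply observes that, since $0 \in D$ and $D$ is contractible (Lemma \ref{eq:contractible}), the Gauss map $\nu$ is $G$-equivariantly homotopic to the identity, and then combines Proposition \ref{prop:comput-G-deg-FrF-Omega} with the normalization property to get $(G)$. The extra care you take over the admissibility of the homotopy and the equivariant upgrade of contractibility is a reasonable elaboration of the same argument rather than a different approach.
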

\begin{proof}
Since $0 \in D$, Lemma \ref{eq:contractible} implies that  the Gauss map $\nu$ is $G$-equivariantly homotopic to the identity map and the result follows from Proposition \ref{prop:comput-G-deg-FrF-Omega}.
\end{proof}

\section{Main Results and Example}\label{sec:main}\label{sec:main-results-example}
 \subsection{Main result}
In this section, we will present our main results and describe an illustrating example with $G = O(2) \times D_8 \times \mathbb Z_2$.      The ``non-degenerate'' version of the main result is:
\begin{theorem}\label{th:main1}
Assume that  $\eta : \bf V \to \mathbb R$ satisfies ($\eta_1$)--($\eta_6$) and let  $f : \bfV \times \bfV^{m-1} \times \bfV \to \bfV$ satisfy conditions (R), \ref{c1}\---\ref{c2}, ($A_3'$), \ref{c4}--\ref{c6} (resp. (R), \ref{c1}\---\ref{c2}, ($A_3'$), \ref{c4}--\ref{c5} and {\rm ($A_6'$)}). Assume, in addition, that  $0 \not\in \sigma(\scrA)$,
where  $\sigma(\scrA)$ is given by \eqref{eq:spectru}--\eqref{spcta} (see also \eqref{eq:epsilon-m}). Assume, finally, that there  
exist  $k\in \bn$ and an orbit type $(H_o)$ in $\Phi_0(G,\mathscr E)$ of maximal kind such that  
$\mathfrak n^{H_o}_k$  is odd (see Definitions \ref{def:extended-maximal}(a) and \ref{def:coef-H0}). 

Then, system   \eqref{eq:x-t-form} admits a non-constant $2\pi$-periodic solution with the extended orbit type $(H_o)$ 
(cf. Definition \ref{def:extended-maximal}(b)).
\end{theorem}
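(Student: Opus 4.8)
The plan is to assemble Theorem~\ref{th:main1} directly from the machinery established in Sections~\ref{sec:a-priori}--\ref{sec:degree-computation-D}, since all the heavy lifting has already been done. First I would invoke Proposition~\ref{prop} to guarantee that $\mathscr F$ (given by \eqref{eq:5}) is a $G$-equivariant completely continuous field, and Remark~\ref{rem:propert-scr-A} to note that, under ($A_3'$) (which implies \ref{c3}), the linearization $\scrA = D\mathscr F(0)$ is a well-defined $G$-equivariant Fredholm operator of index zero; the hypothesis $0\notin\sigma(\scrA)$ then makes $\scrA$ an isomorphism, so Lemma~\ref{lm:3} applies and $\mathscr F$ is $B_\varepsilon(0)$-admissibly $G$-homotopic to $\scrA$. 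At the same time, Lemma~\ref{lm:4} (whose hypotheses are exactly ($\eta_1$)--($\eta_6$), (R), \ref{c1}--\ref{c6} or the ($A_6'$) variant) gives $\Omega$-admissibility of $\mathscr F$ on the set $\Omega$ defined in \eqref{eq:Omega-set}, so that $\gdeg(\mathscr F,\Omega)$ and $\gdeg(\scrA,B_\varepsilon(0))$ are both well-defined and $\omega := \gdeg(\mathscr F,\Omega) - \gdeg(\scrA,B_\varepsilon(0))$ makes sense.

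Next I would compute the two degrees separately. By Corollary~\ref{cor:G-deg}, $\gdeg(\mathscr F,\Omega) = (G)$, the unit of the Burnside ring $A(G)$; in particular its coefficient at any proper orbit type is zero, so $\text{coeff}^{H_o}\big(\gdeg(\mathscr F,\Omega)\big) = 0$ whenever $(H_o)$ is a nontrivial orbit type. On the other side, the reduction formula \eqref{eq6} expresses $\gdeg(\scrA,B_\varepsilon(0))$ as a product of basic degrees $\deg_{\cV_{k,l}^-}$, and Lemma~\ref{lem:comp-1} tells us that if $(H_o)$ is of maximal kind and $\mathfrak n_k^{H_o}$ is odd for some $k$, then $\text{coeff}^{H_o}\big(\gdeg(\scrA,B_\varepsilon(0))\big) = \pm x_o$ with $x_o \in \{1,2\}$ nonzero (from \eqref{eq:coef-x-o-ireduc}, since oddness of $\mathfrak n_k^{H_o}$ forces $\text{dim}\,\cV_{k,l}^{-H_o}$ odd for at least one contributing $l$). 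Therefore $\text{coeff}^{H_o}(\omega) = 0 - (\pm x_o) = \mp x_o \neq 0$, i.e. $(H_o)$ appears in $\omega$ with a nonzero coefficient.

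Finally, I would feed this into Proposition~\ref{th:abstract}: the condition \eqref{eq:degree-actual} that $\omega$ has a nonzero coefficient at $(H_o)$ is now verified, so part~(a) of that proposition yields a $G$-orbit of $2\pi$-periodic solutions $x \in \Omega$ to \eqref{eq:x-t-form} with $(G_x) \geq (H_o)$. Since $(H_o)$ is of maximal kind, by Definition~\ref{def:extended-maximal}(a) there is $k\geq 1$ and $u\neq 0$ in $\mathbb V_k$ with $H_o = G_u$, so $H_o$ does not contain $O(2)\times\{1\}\times\{1\}$ and is in fact finite (the Weyl group computation in \eqref{eq:coef-x-o-ireduc} already presumes $|W(H_o)| \in \{1,2\}$, hence $W(H_o)$ finite, and maximality in $\Phi(G,\mathbb V_k\setminus\{0\})$ with $k\geq 1$ forces $H_o$ itself finite); thus part~(b) gives that the solution is non-constant, and part~(c) gives that it has extended orbit type $(H_o)$ in the sense of Definition~\ref{def:extended-maximal}(b). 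This completes the proof.

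The main obstacle is purely bookkeeping rather than conceptual: one must make sure the hypothesis ``$\mathfrak n_k^{H_o}$ odd'' is translated correctly into ``$x_o \neq 0$'' via \eqref{eq:parity}--\eqref{eq:coef-x-o-ireduc} — i.e. that an odd value of $\sum_l \mathfrak l_{k,l}^{H_o} m_{k,l}$ really does force the parity of $\text{dim}\,\cV_{k,l}^{-H_o}$ to be odd for a contributing index and hence $x_o \in \{1,2\}$ — and that the coefficient of $(H_o)$ in the unit $(G) \in A(G)$ genuinely vanishes, which uses that $(H_o) \neq (G)$ (guaranteed because $(H_o)$ is of maximal kind in some $\mathbb V_k$, $k\geq 1$, so $H_o$ is a proper, indeed finite, subgroup). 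Everything else is a direct citation of Proposition~\ref{prop}, Lemmas~\ref{lm:3}, \ref{lm:4} and \ref{lem:comp-1}, Corollary~\ref{cor:G-deg}, and Proposition~\ref{th:abstract}.
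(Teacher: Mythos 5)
Your proof is correct and follows essentially the same route as the paper's: reduce to computing $\text{coeff}^{H_o}(\omega)$ via Corollary~\ref{cor:G-deg} and Lemma~\ref{lem:comp-1}, then feed the nonvanishing into Proposition~\ref{th:abstract}. You are somewhat more careful than the published argument in explicitly checking that $(H_o)\neq(G)$, that $H_o$ is finite (so Proposition~\ref{th:abstract}(b) yields non-constancy), and that oddness of $\mathfrak n_k^{H_o}$ genuinely forces $x_o\in\{1,2\}$, but the logical skeleton is identical.
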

\begin{proof}
Formulas \eqref{eq6}--\eqref{eq:coef-x-o-ireduc}
 show that  $\gdeg(\scrA,B_\varepsilon(0))   = (G) + a$, where $a$ 
has a zero coefficient corresponding to $(G)$. Hence, $\omega$ given by \eqref{eq5} has a zero coefficient corresponding to $(G)$ 
(cf. Corollary \ref{cor:G-deg}). Now, the proof follows immediately from Lemma \ref{lem:comp-1}  and Proposition \ref{th:abstract}(c).
\end{proof}

Using a similar argument, one can easily establish the following degenerate counterpart of Theorem \ref{th:main1}.     
\begin{theorem}\label{th:main2}
Assume that  $\eta : \bf V \to \mathbb R$ satisfies ($\eta_1$)--($\eta_6$) and let  $f : \bfV \times \bfV^{m-1} \times \bfV \to \bfV$ satisfy conditions (R), \ref{c1}\---\ref{c2}, ($A_3'$), \ref{c4}--\ref{c6} (resp. (R), \ref{c1}\---\ref{c2}, ($A_3'$), \ref{c4}--\ref{c5} and {\rm ($A_6'$)}). 
Put 
\[
\mathscr C:= \left\{ k\in \bn\cup\{0\}:
k^2  = -\mu_0^l-\sum _{j=1}^r  2\cos \frac{2\pi jk}{m}\mu_j^l+\ve_m\mu_r^l, \;\; l=0,1,2,\dots, \mathfrak r, \;  r:=\left\lfloor\frac {m-1}{2} \right\rfloor \right\}
\] 
and choose 
$s \in \bn$ such that 
\begin{equation}\label{eq:non-degenerate1}
\mathscr C\cap \{ (2k-1)s: k\in \bn\}=\emptyset.
\end{equation}
Assume that there  
exist  $k\in \bn$ and an orbit type $(H_o)$ in $\Phi_0(G,\mathscr E)$ of maximal kind such that  
$\mathfrak n^{H_o}_{(2k-1)s}$  is odd (see Definitions \ref{def:extended-maximal}(a) and \ref{def:coef-H0}).

Then, system   \eqref{eq:x-t-form} admits a non-constant $2\pi$-periodic solution with the extended orbit type $(H_o)$ 
(cf. Definition \ref{def:extended-maximal}(b)).

\end{theorem}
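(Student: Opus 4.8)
The plan is to reduce the degenerate situation to the non-degenerate Theorem~\ref{th:main1} by restricting the whole operator problem to a fixed-point subspace on which the linearization is invertible and which, in addition, contains no nonzero constant function. Set $\zeta:=(e^{i\pi/s},1,-1)\in G$ and $H:=\langle\zeta\rangle$. One checks that $H$ is normal in $G$ (conjugation by $\kappa$ carries $\zeta$ to $\zeta^{-1}$), that $|H|=2s$, and, by matching Fourier modes, that
\[
\scrE^H=\bigl\{x\in\scrE:\ x(t+\pi/s)=-x(t)\ \text{for all }t\bigr\}=\overline{\bigoplus_{j\ge 1}\mathbb V_{(2j-1)s}}.
\]
Thus $\scrE^H$ contains no nonzero constant function, so $Q_2|_{\scrE^H}=0$ and $\scrE^H\subseteq P_2\scrE$, and by the choice \eqref{eq:non-degenerate1} of $s$ together with \eqref{eq:spectru}--\eqref{spcta} every $\xi_{(2j-1)s,l}$ is nonzero, so $\scrA^H:=\scrA|_{\scrE^H}=\bigoplus_{j\ge1}\scrA_{(2j-1)s}$ is an isomorphism. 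Since $H\triangleleft G$, the maps $\scrF^H:=\scrF|_{\scrE^H}$ and $\scrA^H=D\scrF^H(0)$ are equivariant with respect to $W(H)=N_G(H)/H=G/H$ (a compact Lie group), and $\Omega^H:=\Omega\cap\scrE^H$ is open, bounded, $W(H)$-invariant and contains $0$.

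I would then rerun, inside $\scrE^H$ and for the group $W(H)$, the two degree computations of Sections~\ref{sec:degree-computation}--\ref{sec:degree-computation-D}; write $W(H)\text{-}\mathrm{deg}$ for the corresponding equivariant degree. Since the a priori bounds of Lemmas~\ref{lm:2}, \ref{lem:second-der-est} and \ref{lm:1} do not use the symmetry hypotheses \ref{c1}--\ref{c2} (cf.\ Remark~\ref{eq:explanation-non-equiv}), every solution of \eqref{eq:x-t-form} lying in $\scrE^H$ already lies in $\Omega^H$, so $\scrF^H$ is $\Omega^H$-admissible, and the chain of $G$-equivariant $\Omega$-admissible homotopies from Corollary~\ref{cor:adissible-ourward}, Lemma~\ref{lem:crucial-lambda-mu} and Lemma~\ref{lem:splitting} restricts to $W(H)$-equivariant $\Omega^H$-admissible homotopies. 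On $\scrE^H$ the splitting homotopy \eqref{eq:prelast-homot} collapses, because $Q_2|_{\scrE^H}=0$ and $f_{2,\widetilde{\lambda}_o}(0,\dots,0)=\widetilde{\lambda}_o^2 f(0,\dots,0)=0$ (using \ref{c2}(ii) and $\mathfrak n(0)=0$); its terminal map is $\id_{\scrE^H}$, whence $W(H)\text{-}\mathrm{deg}(\scrF^H,\Omega^H)=(W(H))$. On the other hand, \eqref{eq:action-L}, the product property and \eqref{mtpl} give
\[
W(H)\text{-}\mathrm{deg}(\scrF^H,B^H_\varepsilon(0))=W(H)\text{-}\mathrm{deg}(\scrA^H,B^H_\varepsilon(0))=\prod_{j\ge1}\prod_{l=0}^{\mathfrak r}\bigl(\deg_{\widehat{\cV}_{(2j-1)s,l}}\bigr)^{m_{(2j-1)s,l}}=(W(H))+\bar a,
\]
where $B^H_\varepsilon(0):=B_\varepsilon(0)\cap\scrE^H$, where $\widehat{\cV}_{(2j-1)s,l}$ is $\cV^-_{(2j-1)s,l}$ regarded as an irreducible $W(H)$-representation (legitimate, since $H$ acts trivially on it), and $\bar a$ has zero $(W(H))$-coefficient. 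Subtracting, $\bar\omega:=W(H)\text{-}\mathrm{deg}(\scrF^H,\Omega^H)-W(H)\text{-}\mathrm{deg}(\scrA^H,B^H_\varepsilon(0))=-\bar a$.

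To conclude, observe that $(H_o)$, being of maximal kind, is realized as an isotropy group in some $\mathbb V_{(2k-1)s}\subseteq\scrE^H$, hence $H\le H_o$; therefore $\bar H_o:=H_o/H\le W(H)$ is defined, and the correspondence theorem for the quotient $G/H$ (which preserves inclusions and, since $H$ acts trivially on $\scrE^H$, fixed-point subspaces) shows $(\bar H_o)$ is of maximal kind in $\Phi_0(W(H),\scrE^H)$ and that $\mathfrak n^{\bar H_o}_{(2k-1)s}$, computed in $W(H)$, equals $\mathfrak n^{H_o}_{(2k-1)s}$, which is odd by hypothesis. By the $W(H)$-analog of Lemma~\ref{lem:comp-1}, $\text{coeff}^{\bar H_o}\bigl(W(H)\text{-}\mathrm{deg}(\scrA^H,B^H_\varepsilon(0))\bigr)=\pm x_o\ne 0$, so $\text{coeff}^{\bar H_o}(\bar\omega)=\mp x_o\ne 0$; by the additivity and existence properties (the $W(H)$-analog of Proposition~\ref{th:abstract}(a),(c)) there is a $W(H)$-orbit of solutions $x\in\Omega^H\setminus\overline{B^H_\varepsilon(0)}$ of \eqref{eq:x-t-form} with extended orbit type $(\bar H_o)$ inside $\scrE^H$. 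Such an $x$ is nonzero, hence non-constant (no nonzero element of $\scrE^H$ is constant), and unwinding $G\to W(H)$, whose kernel $H$ has $SO(2)$-part $\mathbb Z_s$, shows $x$ has extended orbit type $(H_o)$ in the sense of Definition~\ref{def:extended-maximal}(b).

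I expect the main obstacle to be precisely this last bookkeeping: pinning down $W(H)$ explicitly, checking that ``maximal kind'', the fixed-point dimensions, and hence the integers $\mathfrak n^{\bullet}$, transport correctly along $G\to W(H)$, and that the folding homomorphisms $\mu_p$ of Definition~\ref{def:extended-maximal}(b) are compatible with collapsing the $\mathbb Z_s\subset SO(2)$ sitting inside $H$ -- the rest being a routine rerun of Sections~\ref{sec:a-priori}--\ref{sec:degree-computation-D} on $\scrE^H$.
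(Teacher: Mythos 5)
The paper offers no written proof of Theorem \ref{th:main2} -- it only says ``using a similar argument, one can easily establish the following degenerate counterpart of Theorem \ref{th:main1}'' -- so the real question is whether your fill-in is sound. It is, and it reconstructs what the theorem's hypotheses are clearly engineered for.

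Your choice $H=\langle(e^{i\pi/s},1,-1)\rangle$ is exactly right: the $-1$ in the $\bz_2$-slot is essential, since it is what makes $\mathbb V_0\cap\scrE^H=\{0\}$ (with $(e^{i\pi/s},1,1)$ you would keep all constants and lose both the non-constancy conclusion and the simplification $Q_2|_{\scrE^H}=0$). You correctly compute $\scrE^H=\overline{\bigoplus_{j\ge1}\mathbb V_{(2j-1)s}}$, and normality of $H$ (because $\kappa\zeta\kappa^{-1}=\zeta^{-1}$, while $SO(2)\times\Gamma\times\bz_2$ centralizes $\zeta$) makes the $W(H)=G/H$-equivariant restriction clean. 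Condition \eqref{eq:non-degenerate1} is precisely what makes $\scrA|_{\scrE^H}$ invertible, so the $W(H)$-degree around $0$ is defined. Your observations that the a priori lemmas do not use \ref{c1}--\ref{c2} (so the $\Omega$-admissibility restricts to $\Omega^H$) and that the terminal map of \eqref{eq:prelast-homot} becomes $\id$ on $\scrE^H$ because $f_{2,\widetilde\lambda_o}(0,\dots,0)=0$ (from $f(0)=0$ and $\mathfrak n(0)=0$, both by oddness) are all correct; one small slip is writing $\widetilde\lambda_o^2 f(0,\dots,0)$ where it should be $\widetilde\lambda_o^2 f_2(0,\dots,0)$, but both vanish. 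The resulting $W(H)\text{-}\deg(\scrF^H,\Omega^H)=(W(H))$ and the product formula for $W(H)\text{-}\deg(\scrA^H,B^H_\varepsilon(0))$ then give exactly the parity argument of Lemma \ref{lem:comp-1} restricted to the modes $(2j-1)s$.

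The one place where you are (rightly) cautious is the transport of ``maximal kind'' and of the integers $\mathfrak n^{\bullet}_{(2k-1)s}$ through $G\to G/H$. Since $H$ acts trivially on $\scrE^H$, the $G$-orbit types realized in $\scrE^H$ are in an order-preserving, fixed-point-dimension-preserving bijection with the $W(H)$-orbit types, and $N_G(K)/K\simeq N_{G/H}(K/H)/(K/H)$ for $K\ge H$; that is all Lemma \ref{lem:comp-1} needs, so the bookkeeping does close. I would only flag one cosmetic inaccuracy: the $SO(2)$-projection of $H$ is $\bz_{2s}$, while the intersection $H\cap(SO(2)\times\{1\}\times\{1\})$ is $\bz_s$; your phrase ``$SO(2)$-part $\bz_s$'' is correct only in the latter sense, and it is the latter that matters for the folding $\mu_p$ in Definition \ref{def:extended-maximal}(b). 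Overall your argument is a faithful and correct expansion of the paper's one-line hint.
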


\subsection{Example} \label{sec:example}

To construct an example supporting Theorem \ref{th:main1} with condition \ref{c6} being satisfied,
take $\bfV := \mathbb R^2$ and consider the domain $D\subset \bfV$ described in polar coordinates $(r,\theta)$ as follows:
\begin{equation}\label{eq:D}
D:=\{(r,\theta) \in \br^2: 2r^4 -r^4\cos(8\theta)-1<0\}.
\end{equation}
The curve $C:= \partial D$  can be easily plotted (see Figure \ref{fig:D}). Clearly,  $D$ is invariant under the natural action  of the dihedral group $D_8=: \Gamma$ on $\bfV \simeq \mathbb C$ (in particular,
$D$ is symmetric). 
 \begin{figure}[h!]
 \centering
  \includegraphics[width=0.5\textwidth]{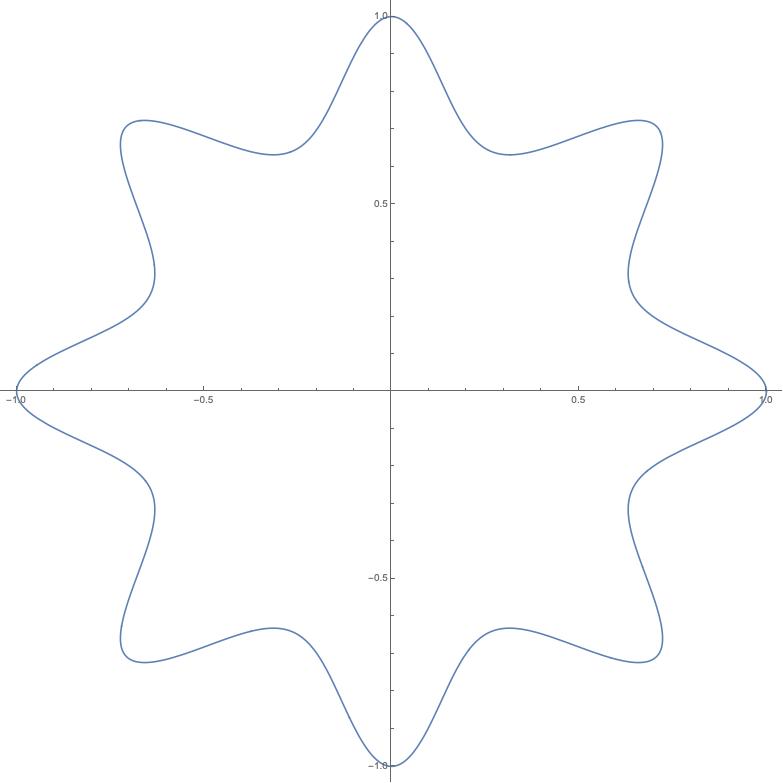}
 \caption{Domain $D$}\label{fig:D}
 \end{figure}
Since $D$ is star shaped,  
the Gauss curvature of $C$ can be easily computed as a function of $\theta$: 
\begin{equation}
\kappa(\theta)=\frac{\sqrt 2 ( -19+56 \cos(8 \theta) -3 \cos(16 \theta)) (2-\cos(8\theta))^{\frac 54}} {(13
 - 8 \cos(8 \theta)-3\cos(16\theta))^{\frac 32}}
\end{equation}
The graph of $\kappa(\theta)$ is shown on Figure \ref{fig:curvature}.
 \begin{figure}[h!]
 \centering
  \includegraphics[width=.8\textwidth]{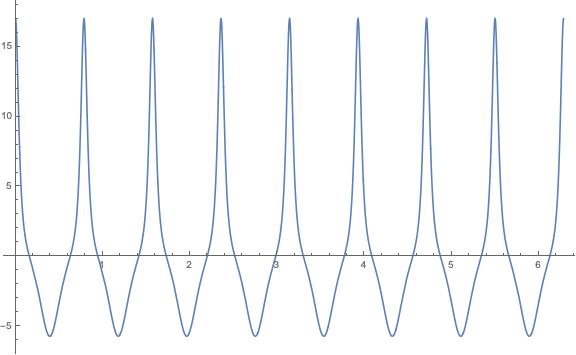}
 \caption{Curvature of $C$}\label{fig:curvature}
 \end{figure}
 
Define the function $\eta:\br^2\to \br$ given
in polar coordinates as follows:
\[
\eta(r,\theta):=2 r^4-r^4 \cos(8\theta)-1.
\]
One can easily verify (directly from the formula) that $\eta$ is $D_8$-invariant.
Passing to  Cartesian coordinates, one obtains:  
\begin{equation}\label{eq:eta-Cartesian}
\eta(x_1,x_2)=\begin{cases}
2(x_1^2+x_2^2)^{2}-\frac{x_1^8-28x_1^6x_2^2+70x_1^4x_2^4-28 x_1^2x_2^6+x_2^8}{(x_1^2+x_2^2)^{2}}-1\quad \text{if } (x_1,x_2)\not=(0,0),\\
-1 \quad\quad\quad\quad\quad\quad\quad\quad\quad\quad\quad\quad\quad\quad\quad\quad\quad\quad \quad\;\; \text{if } (x_1,x_2)\not=(0,0). 
\end{cases}
\end{equation}
By direct verification, one has:
\begin{equation*}\label{eq:etq}
\nabla\eta(x_1,x_2)=
\left[ \begin{array}{l}\frac{8 x_1(x_1^2+x_2^2)^4 - (16 x^7 - 168 x_1^5 x_2^2 + 280 x_1^3 x_2^4 - 
  56 x_1 x_2^6)(x_1^2+x_2^2) +
 4 x_1 (2 x_1^8 - 28 x_1^6 x_2^2 + 70 x_1^4 x_2^4 - 28 x_1^2 x_2^6)}{(x_1^2 + x_2^2)^{3}}\\ 
\frac{8 x_2(x_1^2+x_2^2)^4 +(56 x_1^6 x_2 - 280 x_1^4 x_2^3 + 168 x_1^2 x_2^5)(x_1^2 + x_2^2)+
 4x_2 (2 x_1^8 - 28 x_1^6 x_2^2 + 70 x_1^4 x_2^4 - 28 x_1^2 x_2^6)}{(x_1^2 + x_2^2)^{3}}
\end{array}\right]
\end{equation*}
for $(x_1,x_2) \not=(0,0)$ and 
\begin{equation*}
\lim_{\substack{x_1 \to 0 \\ x_2 \to 0}}\nabla\eta(x_1,x_2) = (0,0).
\end{equation*}
Notice that $\eta$  is of class $C^2$ and $\eta(x_1,x_2)=0$,  if and only if $x:=(x_1,x_2)\in C$, so $\eta$ satisfies 
conditions ($\eta_1$)--($\eta_6$) and $\nabla\eta(0,0)=0$. 
We are now in a position to define the required map $f : \bfV \times \bfV^{m-1} \times \bfV \to \bfV$ by  the formula
 \begin{equation}\label{eq:f-example}
 f(x,y^1,y^2,\dots, y^{m-1},z):= ( |z|^2+1) \nabla \eta(x) + \mu_0 x+\sum_{j=1}^{m-1}\mu_j y^j \quad (x, y^j, z\in \bfV),
 \end{equation}
 where $ \mu_0$ and $\mu_j$ are some constants. So far, $f$ satisfies \ref{c1}--\ref{c2} and (A$_3^{\prime}$) while constants  $ \mu_0$ and $\mu_j$ are a subject to satisfy the remaining conditions of Theorem \ref{th:main1}. 
 
To satisfy ($R$), we will assume $\mu_j = \mu_{m-j}$ for $j = 1,...,m-1$. Next, to satisfy \ref{c4}, we need to estimate $|\nabla  \eta(x(\theta))|$.
For this purpose, we will use again the polar coordinates and, by substituting $r=\root 4\of{\frac 1{2 - \cos(8 \theta)}}$,  one obtains: 
\begin{align*}
|\nabla \eta(x(\theta))|&=2 \frac{\sqrt{52 - 51 \cos(8 \theta) +4 \cos(16 \theta)-\cos(24\theta)}}{(2 - \cos{8 \theta})}\end{align*}
Observe also that 
\begin{equation}\label{eq:fund-our-case}
\mathbb I_x(z) = -\kappa(x) |z|^2,
\end{equation} 
where
\[
\kappa(x(\theta))=\frac{-\sqrt 2 (19 - 56 \cos(8 \theta) + 3 \cos(16 \theta)(2 - \cos(8 \theta))^{\frac 54}}{(
 13 - 8 \cos(8 \theta) - 3 \cos(16 \theta))^{\frac 32}},
\]
and the following estimates take place: 
\begin{equation}\label{eq:estim-Fund-nabl-curv}
17\ge \kappa(x) > -5.8, \quad 4\le   |\nabla\eta(x)| \le 21,  \quad (x \in C).
\end{equation}
We make the following assumption for \eqref{eq:f-example}:
\begin{equation}\label{eq:mu-0-mu-j-c}
 \sum_{j = 0}^{m-1} |\mu_j|>-4,
\end{equation}
and put $R := 1$ (cf. \ref{c4}). Then, combining \eqref{eq:f-example}--\eqref{eq:mu-0-mu-j-c}  with the inequality 
$|\nabla\eta(x)| + \kappa(x) >1 $ (see  Figure \ref{fig:nabla-x},  where the graph of $|\nabla\eta (x(\theta))|+\kappa((\theta))$, $x(\theta)\in C$, $\theta\in [0,2\pi]$, is shown), 
one obtains:
\begin{align*}
\langle f(x, \bold y, z), n_x \rangle &=
 (|z|^2+1) \langle \nabla\eta(x),n_x \rangle + \mu_0 \langle x,n_x \rangle + \sum_{j=1}^{m-1} \langle \mu_jy^j,n_x  \rangle\\
&=  |z|^2 |\nabla\eta(x)| +|\nabla\eta(x)|- |\mu_0| |x| + \sum_{j=1}^{m-1} \langle \mu_jy^j,n_x  \rangle\\
&\geq  |z|^2 \big( |\nabla\eta(x)|\big) +4- \sum_{j=0}^{m-1} |\mu_j| |y^j|\\
&\geq  |z|^2 \Big(-\kappa(x) + \big( |\nabla\eta(x)| + \kappa(x) \big) \Big) + 4-  \sum_{j=0}^{m-1} |\mu_j| \\
& > -|z|^2\kappa(x) = \mathbb I_x(z),
\end{align*}
so that condition \ref{c4} is satisfied. 
 \begin{figure}[h!]
 \centering
  \includegraphics[width=.8\textwidth]{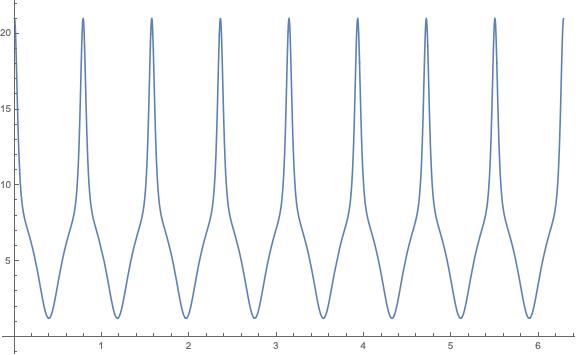}
 \caption{The values of $|\nabla\eta(x(\theta))|+\kappa(x)$ along the curve $C$. The minimal value of  $|\nabla\eta(x(\theta))|+\kappa(x)$ is larger equal than 1.22522}\label{fig:nabla-x}
 \end{figure}

It is easy to see that under the assumptions \eqref{eq:mu-0-mu-j-c},  the map \eqref{eq:f-example} satisfies condition \ref{c5} with $A:= 21$ and $B:=  \sum_{j=0}^{m-1}|\mu_j|$.

\vs

In order to show that condition {\rm ($A_6'$)} is satisfied, recall that $R=1$ and one has the following relations
for $x=(r\cos(\theta),r\sin(\theta))$, $r\le 1$  and $\alpha=4\sqrt {13}$: 
 \begin{align*}
 |f(x,\bold y,z)|&= \Big|(|z|^2+1)\nabla \eta(x)+\mu_0x+\sum_{j=1}^{m-1} \mu_jy^j\Big|\\
&\le |z|^2|\nabla\eta(x)|+21+\sum_{j=0}^{m-1} |\mu_j|\\
 &=|z^2|4 \sqrt{r^4 (4 - 4 \cos(8 \theta) + \cos^2(8 t) + 4 r^4 \sin^2(8 \theta)}+C\\
&\le \alpha\Big(\big(4 r^3 (2 - \cos(8 \theta)\big)|z|^2+|z|^2\Big) +C\\
&\le \alpha\Big(\big(|z|^2\langle x,\nabla\eta(x)\rangle +|z|^2\Big) +C\\
&\le \alpha\Big(\langle x,f(x\bold,y,z)\rangle +|z|^2\Big) +(1+\alpha)C,
 \end{align*} 
 where 
 \[
 C:=21+\sum_{j=0}^{m-1} |\mu_j|.
 \]
Clearly condition {\rm ($A_6'$)} is satisfied with $\displaystyle K:= (1+\alpha)\left(21+\sum_{j=0}^{m-1}|\mu_j|\right).$

 \medskip
 We are now in a position to apply the main Theorem \ref{th:main1} with the group $G:=O(2)\times D_8\times \bz_2$ and $V:=\br^2$ being the natural $D_8$-representation. To this end, we need to study spectrum of the linearization at the origin (see \eqref{eq:spectru}-\eqref{spcta}). We make the 
 following assumption (cf. \eqref{eq:epsilon-m}):
  \begin{equation}\label{eq:mu-cond}
\mu_0+\sum_{j=1}^r2\cos\frac{2\pi j}{m}\mu_j -\ve_m\mu_{\frac m2}<-1,
 \end{equation}
 where $r=\left\lfloor \frac{m-1}2\right\rfloor$. Then, $0 \not\in \sigma(\mathcal A)$ and 
 \[
 \sigma_-(\mathscr A):=\{\xi_0, \xi_1\}, 
 \]
 where 
 \[
 \xi_0=\mu_0+\sum_{j=1}^m \mu_j, \quad \xi_1:=1+\frac 12\left(\sum_{j=1}^r 2\cos \frac{2\pi j} m \mu_j -\ve_m\mu_{\frac m2}\right).
 \]
 In this case, formulas \eqref{mtpl}--\eqref{eq6} suggest:
 \[ \gdeg(\mathscr A,B(\mathscr E))=\deg_{\cV_{0,1}^-}\cdot \deg_{\cV_{1,1}^-},\]
 where 
 \begin{align*}
 \deg_{\cV_{0,1}^-}&:=(G)+(O(2) \times \bz_2^-)-(O(2) \times  D_2^d)-(O(2) \times  \wt D_2^d)\\
 \deg_{\cV_{1,1}^-}&:=(G)+2(\amal{D_2}{}{\bz_2}{\bz_2^-}{\wt D_2^q})+2(\amal{D_2}{}{\bz_2} {\bz_2^-}{D_2^q})+(\amal{D_2}{D_1}{\bz_2} {\bz_2^-}{\bz_2^q})-(\amal{D_2}{D_1}{\bz_2} {\wt D_2^d}{\wt D_2^q})\\
 &-(\amal{D_2}{D_1}{\bz_2}  {D_2^d}{D_2^q})-2(\amal{D_8}{}{\bz_2}{ \bz_2^-}{D_8^q}). 
 \end{align*} 
 
 \medskip
\begin{remark}\label{rem:notations} (i)  For any subgroup $S  \leq D_8$, the symbol $S^q$ stands for $S \times \mathbb Z_2$.  

\smallskip
(ii) Given two subgroups $H \leq O(2)$ and $K \leq D_8^q$, we refer to Subsection \ref{A3}  (see Appendix) for the ``amalgamated notation"  $\amal{H}{Z} {L} {R}{K}$. 

\smallskip
(iii) We refer to  \cite{AED} for the explicit description of the (sub)groups $\wt D_k$, $D_k^z$, $D_k^d$, $\wt D_k^d$, and $\mathbb Z_2^-$.
\end{remark}

\medskip

 The maximal orbit types in $\cV_{1,1}^-\setminus \{0\}$ are:
 \begin{equation}\label{eq:max-orb}
 (\amal{D_2}{D_1}{\bz_2}{\wt D_2^d}{\wt D_2^q}),\quad  (\amal{D_2}{D_1}{\bz_2} {D_2^d}{D_2^q}),\quad  (\amal{D_8}{}{D_8} {\bz_2^-}{D_8^q}). 
 \end{equation}
 We summarize our considerations in the statement following below.
 \vs
 \begin{theorem}\label{th:main-example} Assume that $D$ is given by \eqref{eq:D}.
 Let $\Gamma=D_8$, $\bfV:=\br^2$ be the natural $D_8$-representation and $f : \bfV \times \bfV^{m-1} \times \bfV \to \bfV$ be given by \eqref{eq:f-example}, where the constants $\mu_0$, $\mu_1$, \dots, $\mu_{m-1}$ satisfy conditions \eqref{eq:mu-0-mu-j-c} and 

 \eqref{eq:mu-cond}. Let $(H_o)$ be one of the orbit types listed in \eqref{eq:max-orb}. Then: 
 
 (i) $(H_o)$ of maximal type (see Definition \ref{def:extended-maximal}(a));
 
 (ii)  $\mathfrak n^{H_o}_1=1$ (see Definition \ref{def:coef-H0}); 
 
 (iii) system   \eqref{eq:x-t-form} admits a non-constant $2\pi$-periodic solution $x(t)$ with the extended orbit type $(H_o)$ (see Definition \ref{def:extended-maximal}(b)).
 \end{theorem}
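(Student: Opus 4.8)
The plan is to deduce all three assertions from the abstract Theorem~\ref{th:main1} (in its {\rm ($A_6'$)}-version), so the work divides into the two group-theoretic claims (i)--(ii) and a check that the data $\eta,f$ of the example satisfy the hypotheses of that theorem.

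First I would dispose of the group theory. Since $\bfV=\mathbb R^2$ carries the natural, two-dimensional, \emph{irreducible} $D_8$-representation, the $\Gamma\times\bz_2$-isotypic decomposition \eqref{iso-0} collapses to $\mathbb V_0=V_1^-$ (modeled on $\cV_{0,1}^-$), with $V_0^-=0$; hence $m^0=0$ and $m^1=\dim V_1^-/\dim\cV_1^-=1$ (cf.~\eqref{eq:m-l}), and $\cV_{1,1}^-=\cW_1\otimes\cV_1^-$ is a four-dimensional irreducible $G$-representation. For (i) I would compute the isotropy lattice of $\cV_{1,1}^-\setminus\{0\}$ in the amalgamated notation of the Appendix, assisted by the {\it EquiDeg} \textsf{GAP} package (cf.~\cite{Pin}); this produces the basic degree $\deg_{\cV_{1,1}^-}$ displayed above and shows that the three orbit types listed in \eqref{eq:max-orb} are precisely the maximal ones, so each of them is of maximal kind for the choice $k=1$ in Definition~\ref{def:extended-maximal}(a). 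The same computation shows that for each such $(H_o)$ one has $\dim\cV_{1,1}^{-H_o}=1$, which is odd; equivalently the coefficient of $(H_o)$ in $\deg_{\cV_{1,1}^-}$ is $\pm x_o\neq0$ (cf.~\eqref{eq:basic}--\eqref{eq:coef-x-o-ireduc}), so $\mathfrak l^{H_o}_{1,1}=1$.

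For (ii) it then suffices to substitute into the definitions: by \eqref{eq:parity}, $\mathfrak n^{H_o}_1=\mathfrak l^{H_o}_{1,0}\,m_{1,0}+\mathfrak l^{H_o}_{1,1}\,m_{1,1}$, where $m_{1,0}=0$ because $m^0=0$, while the standing hypothesis \eqref{eq:mu-cond} is exactly the strict inequality defining $m_{1,1}=m^1=1$ in \eqref{mtpl} (equivalently, $\xi_{1,1}<0$); together with $\mathfrak l^{H_o}_{1,1}=1$ this yields $\mathfrak n^{H_o}_1=1$. For (iii) it remains to recall the verifications made above in this subsection: $\eta$ of \eqref{eq:eta-Cartesian} is $C^2$, $D_8$-invariant and even and satisfies ($\eta_1$)--($\eta_6$); $f$ of \eqref{eq:f-example} satisfies (R) (recall $\mu_j=\mu_{m-j}$), \ref{c1}, \ref{c2} and ($A_3'$) by construction, \ref{c4} by the chain of inequalities resting on the bounds \eqref{eq:estim-Fund-nabl-curv}, the estimate $|\nabla\eta(x)|+\kappa(x)>1$ on $C$, and \eqref{eq:mu-0-mu-j-c}, \ref{c5} with $A=21$ and $B=\sum_{j=0}^{m-1}|\mu_j|$, and {\rm ($A_6'$)} with $\alpha=4\sqrt{13}$ and $K=(1+\alpha)\bigl(21+\sum_{j=0}^{m-1}|\mu_j|\bigr)$; finally \eqref{eq:mu-cond} forces $0\notin\sigma(\scrA)$. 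Taking $k=1$ and $(H_o)$ as in \eqref{eq:max-orb}, part (ii) supplies an orbit type of maximal kind with $\mathfrak n^{H_o}_k$ odd, so Theorem~\ref{th:main1} produces a non-constant $2\pi$-periodic solution of \eqref{eq:x-t-form} with extended orbit type $(H_o)$, which is assertion (iii).

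The one genuinely delicate step is the group-theoretic one: correctly describing the isotropy lattice and the fixed-point dimensions of the four-dimensional $O(2)\times D_8\times\bz_2$-representation $\cV_{1,1}^-$ in amalgamated notation, reading off its maximal orbit types, and confirming that the corresponding fixed-point spaces are odd-dimensional. Everything downstream---the value of $m_{1,1}$, the parity of $\mathfrak n^{H_o}_1$, and the checking of the hypotheses on $\eta$ and $f$---is either a direct reading of the definitions \eqref{mtpl}--\eqref{eq:parity} or a citation of estimates already established above.
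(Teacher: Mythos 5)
Your proposal is correct and follows essentially the same route as the paper: the paper itself presents Theorem~\ref{th:main-example} as a summary of the computations carried out in Subsection~\ref{sec:example} (no separate proof environment is given), and your reconstruction — identifying the single $D_8\times\bz_2$-isotypic component of $\bfV$ so that $m^0=0$, $m^1=1$; reading the maximal orbit types and the parity of $\dim\cV_{1,1}^{-H_o}$ off the \textit{EquiDeg} computation of $\deg_{\cV_{1,1}^-}$; observing that \eqref{eq:mu-cond} gives $\xi_1<0$ hence $m_{1,1}=1$ and so $\mathfrak n^{H_o}_1=1$; and feeding this into Theorem~\ref{th:main1} via the already-verified hypotheses ($\eta_1$)--($\eta_6$), (R), \ref{c1}--\ref{c2}, ($A_3'$), \ref{c4}--\ref{c5}, ($A_6'$) — is exactly that argument made explicit.
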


Actually, for our example, the equivariant invariant $\omega=(G)-\gdeg(\mathscr A,B(\mathscr E))$ can be exactly computed using the {\it Equideg} package in GAP system:
\begin{align*}
\omega&=2(\amal{D_1}{}{\bz_2}{\bz_2^-}{D_2^d})+2(\amal{D_1}{}{\bz_2}{ \bz_2^-}{\wt D_2^d})+2(D_1 \times \bz_2^-)-2(\amal{D_2}{}{\bz_2}{\bz_2^-}{\wt D_2^q})\\
&-2(\amal{D_2}{\bz_2} {\bz_2^-}{D_2^q})-(\amal{D_2}{D_1} {\bz_2} {\bz_2^-}{D_2^d})-(D_1 \times  D_2^d)-
(\amal{D_2}{D_1}{\bz_2} {\bz_2^-}{\wt D_2^d})\\
&-(D_1 \times \wt D_2^d)-(\amal{D_2}{D_1}{\bz_2}{\bz_2^-}{\bz_2^q})+(\amal{D_2}{D_1}{\bz-2} { \wt D_2^d}{D_2^q})+(\amal{D_2}{D_1} {\bz_2}{ D_2^d}{D_2^q})\\
&+2(\amal{D_8}{}{D_8}{\bz_2^-}{D_8^q})-(O(2) \times \bz_2^-)+(O(2) \times  D_2^d)+(O(2) \times \wt D_2^d)
\end{align*}
\vskip1cm

\newpage
\appendix
\section{Equivariant Brouwer Degree Background}
\label{subsec:G-degree}

\subsection{Equivariant notation.} Below $\mathcal G$ stands for a compact Lie group.
For a subgroup $H$ of $\mathcal G$, 
denote by $N(H)$ the
normalizer of $H$ in $\mathcal G$ and by $W(H)=N(H)/H$ the Weyl group of $H$.  The symbol $(H)$ stands for the conjugacy class of $H$ in $\mathcal G$. 
Put $\Phi(\mathcal G):=\{(H): H\le \mathcal G\}$.
The set $\Phi (\mathcal G)$ has a natural partial order defined by 
$(H)\leq (K)$ iff $\exists g\in \mathcal G\;\;gHg^{-1}\leq K$. 
Put $\Phi_0 (\mathcal G):= \{ (H) \in \Phi(\mathcal G) \; : \; \text{$W(H)$  is finite}\}$.

For a $\mathcal G$-space $X$ and $x\in X$, denote by
$\mathcal G_{x} :=\{g\in \mathcal G:gx=x\}$  the {\it isotropy group}  of $x$
and call $(\mathcal G_{x})$   the {\it orbit type} of $x\in X$. Put $\Phi(\mathcal G,X) := \{(H) \in \Phi_0(\mathcal G) \; : \; 
(H) = (\mathcal G_x) \; \text{for some $x \in X$}\}$ and  $\Phi_0(\mathcal G,X):= \Phi(\mathcal G,X) \cap \Phi_0(\mathcal G)$. For a subgroup $H\leq \mathcal G$, the subspace $
X^{H} :=\{x\in X:\mathcal G_{x}\geq H\}$ is called the {\it $H$-fixed-point subspace} of $X$. If $Y$ is another $\mathcal G$-space, then a continuous map $f : X \to Y$ is called {\it equivariant} if $f(gx) = gf(x)$ for each $x \in X$ and $g \in \mathcal G$. 
Let $V$ be a finite-dimensional  $\mathcal G$-representation (without loss of generality, orthogonal).
Then, $V$  decomposes into a direct sum 
\begin{equation}
V=V_{0}\oplus V_{1}\oplus \dots \oplus V_{r},  \label{eq:Giso}
\end{equation}
where each component $V_{i}$ is {\it modeled} on the
irreducible $\mathcal G$-representation $\mathcal{V}_{i}$, $i=0,1,2,\dots ,r$, that is, $V_{i}$  contains all the irreducible subrepresentations of $V$
equivalent to $\mathcal{V}_{i}$. Decomposition  \eqref{eq:Giso}  is called  $\mathcal G$\textit{-isotypic  decomposition of} $V$.
\vs 
\subsection{ Axioms of equivariant Brouwer degree.} Denote by  $\mathcal{M}^{\mathcal G}$ the set of all admissible $\mathcal G$-pairs and let $A(\mathcal G)$ stand for the Burnside ring of $\mathcal G$ (see Introduction, item (b)). The following result (cf.  \cite{AED}) can be considered as an axiomatic definition of the {\it $\mathcal G$-equivariant Brouwer degree}.

\begin{theorem}
\label{thm:GpropDeg} There exists a unique map $\mathcal G\mbox{\rm -}\deg:\mathcal{M}
^{\mathcal G}\to A(\mathcal G)$, which assigns to every admissible $\mathcal G$-pair $(f,\Omega)$ an
element $\gdeg(f,\Omega)\in A(\mathcal G)$
\begin{equation}
\label{eq:G-deg0}\mathcal G\mbox{\rm -}\deg(f,\Omega)=\sum_{(H)}%
{n_{H}(H)}= n_{H_{1}}(H_{1})+\dots+n_{H_{m}}(H_{m}),
\end{equation}
satisfying the following properties:

\begin{description}
\item[(Existence)] If $\mathcal G\mbox{\rm -}\deg(f,\Omega)\ne
0$, i.e., $n_{H_{i}}\neq0$ for some $i$ in \eqref{eq:G-deg0}, then there
exists $x\in\Omega$ such that $f(x)=0$ and $(\mathcal G_{x})\geq(H_{i})$.

\item[(Additivity)] Let $\Omega_{1}$ and $\Omega_{2}$
be two disjoint open $\mathcal G$-invariant subsets of $\Omega$ such that
$f^{-1}(0)\cap\Omega\subset\Omega_{1}\cup\Omega_{2}$. Then,
\begin{align*}
\mathcal G\mbox{\rm -}\deg(f,\Omega)=\mathcal G\mbox{\rm -}\deg(f,\Omega_{1})+\mathcal G\mbox{\rm -}\deg
(f,\Omega_{2}).
\end{align*}

\item[(Homotopy)] If $h:[0,1]\times V\to V$ is an
$\Omega$-admissible $\mathcal G$-homotopy, then
\begin{align*}
\mathcal G\mbox{\rm -}\deg(h_{t},\Omega)=\mathrm{constant}.
\end{align*}

\item[(Normalization)] Let $\Omega$ be a $G$-invariant
open bounded neighborhood of $0$ in $V$. Then,
\begin{align*}
\mathcal G\mbox{\rm -}\deg(\id,\Omega)=(\mathcal G).
\end{align*}

\item[(Product)] For any $(f_{1},\Omega
_{1}),(f_{2},\Omega_{2})\in\mathcal{M} ^{\mathcal G}$,
\begin{align*}
\mathcal G\mbox{\rm -}\deg(f_{1}\times f_{2},\Omega_{1}\times\Omega_{2})=
\mathcal G\mbox{\rm -}\deg(f_{1},\Omega_{1})\cdot \mathcal G\mbox{\rm -}\deg(f_{2},\Omega_{2}),
\end{align*}
where the multiplication `$\cdot$' is taken in the Burnside ring $A(\mathcal G )$.

\item[(Recurrence Formula)] For an admissible $\mathcal G$-pair
$(f,\Omega)$, the $\mathcal G$-degree \eqref{eq:G-deg0} can be computed using the
following Recurrence Formula:
\begin{equation}\label{eq:RF-0}
n_{H}=\frac{\deg(f^{H},\Omega^{H})- \sum_{(K)>(H)}{n_{K}\,
n(H,K)\, \left|  W(K)\right|  }}{\left|  W(H)\right|  },
\end{equation}
where $\left|  X\right|  $ stands for the number of elements in the set $X$
and $\deg(f^{H},\Omega^{H})$ is the Brouwer degree of the map $f^{H}%
:=f|_{V^{H}}$ on the set $\Omega^{H}\subset V^{H}$.
\end{description}
\end{theorem}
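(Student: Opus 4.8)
The plan is to prove existence by constructing $\mathcal G\text{-}\deg$ on a dense class of ``generic'' maps and extending by approximation, and to prove uniqueness by showing that every map obeying the six listed properties necessarily satisfies the Recurrence Formula, which in turn pins down all of its coefficients. A preliminary remark used throughout: a finite-dimensional $\mathcal G$-representation $V$ has only finitely many orbit types, so $\Phi_0(\mathcal G,\overline\Omega)$ is finite, every sum below is finite, the proposed values lie in $A(\mathcal G)$, and the downward induction over the finite poset $(\Phi_0(\mathcal G,\overline\Omega),\le)$ terminates.

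For existence I would first treat \emph{regular normal} admissible pairs $(f,\Omega)$, for which --- after the standard normalization of $f$ transverse to the lower strata --- the zero set $f^{-1}(0)\cap\Omega$ is a finite disjoint union of orbits $\mathcal Gx_i$ of types $(H_i)\in\Phi_0(\mathcal G)$ and each orbit carries a well-defined sign $\varepsilon_i=\pm1$, namely the sign of $\det$ of the derivative of $f$ restricted to a slice at $x_i$ (independent of the point chosen on the orbit, since translating along the orbit conjugates that derivative and leaves its determinant unchanged). Set $\mathcal G\text{-}\deg(f,\Omega):=\sum_i\varepsilon_i(H_i)$. Independence of the regular normal representative, and the consistent extension to all admissible pairs, follow from an equivariant approximation theorem combined with the homotopy argument. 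Granting the construction, the (Existence), (Additivity), (Homotopy) and (Normalization) properties are then routine: each is inherited from the corresponding classical property of the Brouwer degree of the fixed-point maps $f^H:\Omega^H\to V^H$, where admissibility passes to every slice because $\partial\Omega^H\subseteq(\partial\Omega)^H$, and $\mathcal G\text{-}\deg(\id,\Omega)$ records the single solution orbit $\{0\}$ of type $(\mathcal G)$, giving $(\mathcal G)$.

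The substantive axiom, and the step I expect to be the main obstacle, is (Product). I would reduce $f_1,f_2$ to regular normal maps via (Homotopy); then $(f_1\times f_2)^{-1}(0)=f_1^{-1}(0)\times f_2^{-1}(0)$ is a finite union of products of orbits $\mathcal G/K_1\times\mathcal G/K_2$ under the diagonal action, and the key combinatorial input is that the number of $\mathcal G$-orbits of each type $(L)$ inside $\mathcal G/K_1\times\mathcal G/K_2$ is exactly the coefficient $m_L$ of the Burnside-ring product $(K_1)\cdot(K_2)$ from \eqref{eq:mult}. Matching this with the multiplicativity of the orientation signs --- the sign of a product zero is the product of the two factor signs, once orientations on $V_1\oplus V_2$ are chosen compatibly with those on $V_1$ and $V_2$ --- yields $\mathcal G\text{-}\deg(f_1\times f_2,\Omega_1\times\Omega_2)=\mathcal G\text{-}\deg(f_1,\Omega_1)\cdot\mathcal G\text{-}\deg(f_2,\Omega_2)$. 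The delicate part is the orientation bookkeeping across the several isotropy strata of $V_1\oplus V_2$, and this is exactly where finiteness of the Weyl groups is used.

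For uniqueness, let $\mathcal G\text{-}\deg$ be any map satisfying the six properties, fix an admissible pair $(f,\Omega)$ and a subgroup $H$ with $W(H)$ finite. Using (Homotopy) to deform $f$ to a regular normal map, and (Additivity) with (Existence) to localize near the individual solution orbits, I would evaluate the ordinary Brouwer degree $\deg(f^H,\Omega^H)$ by counting, for each solution orbit $\mathcal Gx_i$ of type $(H_i)$, the number of its points lying in the slice $V^H$; a short orbit computation gives $\#(\mathcal Gx_i\cap V^H)=n(H,H_i)\,|W(H_i)|$, all of these points carrying the common sign $\varepsilon_i$ dictated by the normality convention. Summing over $i$ and grouping by conjugacy class gives
\[
\deg(f^H,\Omega^H)=\sum_{(K)\ge(H)} n_K\,n(H,K)\,|W(K)|,
\]
with $n_K$ the coefficients of $\mathcal G\text{-}\deg(f,\Omega)$. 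Since $n(H,H)=1$, this linear system is triangular with respect to $\le$ with diagonal entries $|W(H)|\ne 0$; solving it from the maximal orbit types downward over the finite poset $\Phi_0(\mathcal G,\overline\Omega)$ shows the coefficients are uniquely determined --- which is precisely the Recurrence Formula \eqref{eq:RF-0}. Hence any two maps satisfying the six properties agree, completing the plan; the only genuinely technical ingredients are the regular normal reduction, its orientation conventions, and the combinatorics behind (Product).
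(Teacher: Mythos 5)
The paper does not actually prove Theorem \ref{thm:GpropDeg}: it is stated in the Appendix as an axiomatic characterization quoted from the monograph \cite{AED} (see also \cite{KB}), so there is no in-paper argument to compare against. Your outline is, in substance, the standard construction carried out in those references: define the degree on regular normal pairs as a signed count of zero orbits (the normality condition forcing all zero orbits to have finite Weyl group, so the sum lands in $A(\mathcal G)$), extend by equivariant approximation and homotopy invariance, verify (Product) via the orbit-count description of multiplication in the Burnside ring, and obtain the Recurrence Formula from the slice computation $\#(\mathcal Gx_i\cap V^H)=n(H,H_i)\,\lvert W(H_i)\rvert$. That computation is correct, and the triangularity argument over the finite poset of orbit types is the right way to invert the system. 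Two remarks. First, your uniqueness argument is more elaborate than necessary: since the Recurrence Formula is itself one of the listed properties and the Brouwer degrees $\deg(f^H,\Omega^H)$ depend only on $(f,\Omega)$, uniqueness follows immediately by downward induction on $(\Phi_0(\mathcal G),\le)$; the computation you perform is really the verification that the \emph{constructed} degree satisfies the Recurrence Formula, i.e.\ it belongs to the existence half. Second, the steps you defer --- the equivariant transversality and normality approximation, the orientation bookkeeping across strata, and the sign multiplicativity in (Product) --- are precisely where the length of the published proofs lies, so the proposal should be read as a correct road map rather than a complete argument; none of the deferred steps would fail, but none is carried out.
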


The $\gdeg(f,\Omega)$ is 
 called the {\it $\mathcal G$%
-equivariant  Brouwer degree of $f$ in $\Omega$}.


\vs
\paragraph{\bf Brouwer equivariant degree of linear equivariant isomorphism:} Put $B(V):=\left\{  x\in V:\left|  x\right|  <1\right\}  $. For each
irreducible $\mathcal G$-representation $\mathcal{V} _{i}$, $i=0,1,2,\dots$, define
\begin{align}\label{eq:prop-bas-decomp}
\deg_{\mathcal{V}_{i}}:=\mathcal G\mbox{\rm -}\deg(-\id,B(\mathcal{V} _{i})),
\end{align}
and call it  
the \emph{basic degree}.

Consider a $\mathcal G$-equivariant linear isomorphism $T:V\to V$ and assume that $V$
has a $\mathcal G$-isotypic  decomposition \eqref{eq:Giso}. Then, by the
Product  property,
\begin{equation}\label{eq:prod-prop}
\mathcal G\mbox{\rm -}\deg(T,B(V))=\prod_{i=0}^{r}\mathcal G\mbox{\rm -}\deg
(T_{i},B(V_{i}))= \prod_{i=0}^{r}\prod_{\mu\in\sigma_{-}(T)} \left(
\deg_{\mathcal{V} _{i}}\right)  ^{m_{i}(\mu)}%
\end{equation}
where $T_{i}=T|_{V_{i}}$ and $\sigma_{-}(T)$ denotes the real negative
spectrum of $T$, i.e., $\sigma_{-}(T)=\left\{  \mu\in\sigma(T):\mu<0\right\}
$. \vskip.3cm

Notice that the basic degrees can be effectively computed from \eqref{eq:RF-0}: 
\begin{align*}
\deg_{\mathcal{V} _{i}}=\sum_{(H)}n_{H}(H),
\end{align*}
where 
\begin{equation}
\label{eq:bdeg-nL}n_{H}=\frac{(-1)^{\dim\mathcal{V} _{i}^{H}}- \sum
_{H<K}{n_{K}\, n(H,K)\, \left|  W(K)\right|  }}{\left|  W(H)\right|  }.
\end{equation}

\subsection{Amalgamated notation.}\label{A3} 
Given two groups $G_{1}$ and
$G_{2}$, 
the well-known result of \'E. Goursat (see \cite{DKLP,Goursat}) provides the following description of a
subgroup $\mathscr H \leq G_{1}\times G_{2}$: 
there exist subgroups
$H\leq G_{1}$ and $K\leq G_{2}$, a group $L$, and two epimorphisms
$\varphi:H\rightarrow L$ and $\psi:K\rightarrow L$ such that
\begin{equation*}
\mathscr H=\{(h,k)\in H\times K:\varphi(h)=\psi(k)\}.
\end{equation*}
The widely used notation for $\mathscr H$ is 
\begin{equation}\label{eq:amalgam-projections}
\mathscr H:=H\prescript{\varphi}{}\times_{L}^{\psi}K,
\end{equation}
in which case $H\prescript{\varphi}{}\times_{L}^{\psi}K$ is called an
\textit{amalgamated} subgroup of $G_{1}\times G_{2}$.

In this paper, we are interested in describing conjugacy classes of $\mathscr H$. Therefore, to make notation \eqref{eq:amalgam-projections} simpler and
self-contained, it is enough to indicate $L$,  
$Z=\text{Ker\thinspace}(\varphi)$ and 
$R=\text{Ker\thinspace}(\psi)$. Hence, instead of  
\eqref{eq:amalgam-projections}, we use the following notation:
\begin{equation}
\mathscr H=:H{\prescript{Z}{}\times_{L}^{R}}K~ \label{eq:amalg}.
\end{equation}

\vs
\subsection{GAP script used in this paper.}

{\small
{\bf GAP CODE:} \\
{\begin{lstlisting}[language=GAP, frame=single]
	LoadPackage( "EquiDeg" );
	# generate the groups D8, Z2, and D8 x Z2
	o2 := OrthogonalGroupOverReal( 2 );
	d8 := pDihedralGroup( 8 );                    
	z2 := pCyclicGroup( 2 );
	g1 := DirectProduct( d8, z2 );
	# generate conjugacy classes of D8 x Z2 and 
	# assigned their names 
	ccsg1 := ConjugacyClassesSubgroups (g1);
	ccsg1_names := [ "Z1", "Z2", "D1tz", "D1t",  "D1z", 
	"Z1p", "Z2m", "D1", "D2", "Z4d", "D2z", "Z2p",
	"D2td", "D1tp", "D1p", "Z4", "D2t", "D2tz", "D2d",  
	"Z4p","D4tz", "D4t", "D4","D2p", "Z8", "D4td", "D4d", 
	"D2tp", "D4z", "Z8d", "D4dh", "D8", "Z8p","D8z", "D4p", 
	"D8d", "D4tp", "D8p" ];
	ListA( ccsg1, ccsg1_names, SetAbbrv );
	# generate O(2)xD8xZ2
	g := DirectProduct( o2, g1 ); 
	ccss_g := ConjugacyClassesSubgroups( g );
	# Character Table for D8xZ2
	tbl := CharacterTable ( g1 );
	Display( tbl );
	Display( ConjugacyClasses( g1 ) );
	# the representation V_1^- is tbl[11]
	deg11:=BasicDegree( Irr( g )[1,11]);
	deg01:=BasicDegree( Irr( g )[0,11]);
	# degree of A
	degA:=deg01*deg11;
	# maximal orbit types in V[1,1]
	chi11:=Irr(g)[1,11];
	maxorbit11:=MaximalOrbitTypes(chi11);
\end{lstlisting}}
}

\vs
\noi{
\large\bf Acknowledgments:} Z. Balanov was supported by the Applied Mathematic Characteristic Discipline in Xiangnan University and Furong Scholars Award Program in Hunan Province.
W. Krawcewicz was supported by the National Natural
Science Foundation of China (No.~11871171). F. Liao was supported by   the National Natural
Science Foundation of China (No: ~11701487) and the Scientific Research Project of Hunan Province Education Department (No: 19C1700).  The computations of the equivariant degree were done using {\it EquiDeg} package for GAP systems, which was created by Hao-Pin Wu.
Symbolic computations and plotting in this paper were done using {\it Mathematica}. The authors are also grateful to Santiago Camacho from {\it Wolfram Mathematica} for his guidance in usage of {\it Mathematica}.

\end{document}